\numberwithin{equation}{section}
\newtheorem{thm}{Theorem}[section]
\newtheorem{df}[thm]{Definition}
\newtheorem{prop}[thm]{Proposition}
\newtheorem{lem}[thm]{Lemma}
\newtheorem{rem}[thm]{Remark}
\newtheorem{cor}[thm]{Corollary}
\let\oldproofname=\proofname
\renewcommand{\proofname}{\rm\bf{\oldproofname}}
\newcommand{\N}{\mathbb{N}}
\newcommand{\R}{\mathbb{R}}
\newcommand{\cA}{\mathcal{A}}
\newcommand{\cC}{\mathcal{C}}
\newcommand{\cD}{\mathcal{D}}
\newcommand{\cE}{\mathcal{E}}
\newcommand{\cF}{\mathcal{F}}
\newcommand{\cG}{\mathcal{G}}
\newcommand{\cH}{\mathcal{H}}
\newcommand{\cI}{\mathcal{I}}
\newcommand{\cL}{\mathcal{L}}
\newcommand{\cM}{\mathcal{M}}
\newcommand{\cO}{\mathcal{O}}
\newcommand{\cR}{\mathcal{R}}
\newcommand{\dd}{\,{\rm d}}
\newcommand{\D}{{\rm d}}
\renewcommand{\div}{\mathop{\mathrm{div}}\nolimits}
\newcommand{\curl}{\mathop{\mathrm{curl}}}
\newcommand{\supp}{\mathop{\mathrm{supp}}}
\newcommand{\logp}{\mathrm{log}_+}
\newcommand{\1}{\mathbf{1}}
\newcommand{\weakto}{\rightharpoonup}
\newcommand{\tv}{\mathrm{tv}}
\newcommand{\BMO}{\mathrm{BMO}}
\newcommand{\BS}{\mathrm{BS}}
\newcommand{\QED}{\mbox{}\hfill$\Box$}
\begin{document}

\title{Uniqueness of axisymmetric viscous flows\\
originating from circular vortex filaments}

\author{
\null\\
{\bf Thierry Gallay}\\
Institut Fourier\\
Universit\'e Grenoble Alpes et CNRS\\
100 rue des Maths\\
38610 Gi\`eres, France\\
{\tt Thierry.Gallay@ujf-grenoble.fr}
\and
\\
{\bf Vladim\'ir \v{S}ver\'ak}\\
School of Mathematics\\
University of Minnesota\\
127 Vincent Hall,
206 Church St.\thinspace SE\\
Minneapolis, MN 55455, USA\\
{\tt sverak@math.umn.edu}}

\date{September 6, 2016}

\maketitle

\begin{abstract}
The incompressible Navier-Stokes equations in $\R^3$ are shown to
admit a unique axisymmetric solution without swirl if the initial
vorticity is a circular vortex filament with arbitrarily large
circulation Reynolds number. The emphasis is on uniqueness, as
existence has already been established in~\cite{FS}. The main
difficulty which has to be overcome is that the nonlinear regime for
such flows is outside of applicability of standard perturbation
theory, even for short times. The solutions we consider are archetypal
examples of viscous vortex rings, and can be thought of as
axisymmetric analogues of the self-similar Lamb-Oseen vortices in
two-dimensional flows. Our method provides the leading term in a 
fixed-viscosity short-time asymptotic expansion of the solution, and 
may in principle be extended so as to give a rigorous justification, 
in the axisymmetric situation, of higher-order formal asymptotic 
expansions that can be found in the literature \cite{Callegari-Ting}.
\end{abstract}

\section{Introduction}\label{sec1}

In three-dimensional ideal fluids, a {\em vortex ring} is an
axisymmetric flow with the property that the vorticity is entirely
concentrated in a solid torus, which moves with constant speed along
the symmetry axis. The vortex lines form large circles that fill the
torus, whereas fluid particles spin around the vortex core within
perpendicular cross sections. If $\bar r, r$ denote the major and
minor radii of the torus, respectively, and if $\Gamma$ is the flux of
the vorticity vector through any cross section, the ``local induction
approximation'' gives the following expression for the translation
speed along the axis
\begin{equation}\label{V-LIA}
  V \,=\, \frac{\Gamma}{4\pi\bar r}\Bigl(\log\frac{1}{\epsilon} +
  \cO(1)\Bigr)\,,
\end{equation}
which is valid in the asymptotic regime where the aspect ratio
$\epsilon = r/\bar r$ is small. For the three-dimensional Euler equations,
existence of large families of uniformly translating vortex ring
solutions has been obtained using fixed point methods \cite{Fr} or
variational techniques \cite{AS,FB,FT}, and formula \eqref{V-LIA} has
been rigorously justified when $\epsilon \ll 1$ \cite{Fr,FT}.  In
addition, for general initial data that are close enough to a vortex
ring with small aspect ratio, it is known that the solution evolves
in such a way that the vorticity remains sharply concentrated,
for a relatively long time, near a vortex ring whose speed is
given by \eqref{V-LIA}, see \cite{BCM}.

The situation is quite different for viscous fluids, in which
uniformly translating vortex rings cannot exist because all localized
structures are eventually spread out by diffusion. In that case,
however, it is quite natural to consider the initial value problem
with a {\em vortex filament} as initial data, namely a vortex ring
with infinitesimal cross section and yet nonzero circulation $\Gamma$,
so that the initial vorticity is a measure supported by a circle of
radius $\bar r$. It is then expected that the solution at time $t > 0$
will be close to a vortex ring with Gaussian vorticity profile and
minor radius $r = \sqrt{\nu t}$, where $\nu$ is the kinematic
viscosity. Moreover, this vortex will move along its symmetry axis at
a speed given by \eqref{V-LIA}, as long as the time-dependent aspect
ratio $\epsilon = \sqrt{\nu t}/\bar r$ is sufficiently small.

Justifying these heuristic considerations requires some work. For
singular initial data such as vortex filaments, the best available
results on the Cauchy problem for the three-dimensional Navier-Stokes
equations provide existence of a (unique and global) solution only if
the circulation parameter $\Gamma$ is small enough compared to
viscosity, see \cite{GM,KT}. For larger values of $\Gamma/\nu$,
existence of a (global) axisymmetric solution without swirl has been
recently obtained by H.~Feng and the second author \cite{FS}, using
approximation techniques that do not give any information about
uniqueness, even within the axisymmetric class. In this paper, our
main purpose is to fill this gap and to prove that, if one starts from
a circular vortex filament with arbitrary strength $\Gamma$, the
Navier-Stokes equations have a {\em unique} axisymmetric solution
without swirl, which is global and smooth for positive times.  This
axisymmetric solution is the archetype of a viscous vortex ring, just
as the two-dimensional Lamb-Oseen solution is the archetype of a
viscous columnar vortex \cite{GW2}. Our approach is constructive and
allows us to determine the leading term in the short-time asymptotic
expansion of the vortex ring for a fixed viscosity. In principle,
performing the calculations to higher orders in the spirit of
Callegari and Ting's paper~\cite{Callegari-Ting}, one should be able
to obtain more precise approximations of the solution that remain
valid as long as the aspect ratio $\epsilon = \sqrt{\nu t}/ \bar r$ is
small enough. In particular, computing the next order after the
leading term, we should recover the asymptotic formula \eqref{V-LIA}
for the translation speed if $|\Gamma|/\nu \gg 1$. We leave this
extension for future work.

To state our results in a more precise way, we start from the
Navier-Stokes equations
\begin{equation}\label{NS3D}
  \partial_t u + (u\cdot \nabla)u \,=\, \nu\Delta u - \frac{1}{\rho}
  \nabla p\,, \qquad \div u \,=\, 0\,,
\end{equation}
in the whole space $\R^3$, where $u = u(x,t) \in \R^3$ denotes
the velocity field and $p = p(x,t) \in \R$ is the internal
pressure. Both the kinematic viscosity $\nu > 0$ and the
fluid density $\rho > 0$ are assumed to be constant.
We restrict ourselves to {\em axisymmetric solutions without swirl}
for which the velocity field $u$ and the vorticity $\omega =
\curl u$ have the particular form:
\begin{equation}\label{uomaxi}
  u(x,t) \,=\, u_r(r,z,t) e_r + u_z(r,z,t) e_z\,, \qquad
  \omega(r,z,t) \,=\, \omega_\theta(r,z,t)e_\theta\,.
\end{equation}
Here $(r,\theta,z)$ are the usual cylindrical coordinates in $\R^3$,
such that $x = (r\cos\theta,r\sin\theta,z)$ for any $x \in \R^3$,
and $e_r, e_\theta, e_z$ denote the unit vectors in the radial, toroidal,
and vertical directions, respectively. The axisymmetric vorticity
$\omega_\theta = \partial_z u_r - \partial_r u_z$ satisfies the
evolution equation
\begin{equation}\label{omeq}
  \partial_t \omega_\theta + u\cdot\nabla\omega_\theta -
  \frac{u_r}{r}\omega_\theta \,=\, \nu\Bigl(\Delta \omega_\theta
  - \frac{\omega_\theta}{r^2}\Bigr)\,,
\end{equation}
where $u\cdot\nabla = u_r \partial_r + u_z \partial_z$ and $\Delta
= \partial_r^2 + \frac1r \partial_r + \partial_z^2$ is the
axisymmetric Laplace operator in cylindrical coordinates.  The
velocity $u$ can be expressed in terms of the axisymmetric vorticity
$\omega_\theta$ by solving the linear elliptic system
\begin{equation}\label{diffBS}
  \partial_r u_r + \frac{1}{r} u_r + \partial_z u_z \,=\, 0\,,
  \qquad \partial_z u_r - \partial_r u_z \,=\, \omega_\theta\,,
\end{equation}
in the half-plane $\Omega = \{(r,z) \in \R^2\,|\, r > 0\,,~z \in
\R\}$. Boundary conditions for the quantities $u_r$, $u_z$, and
$\omega_\theta$ are prescribed by requiring that the vectorial
functions $u$, $\omega$ in~\eqref{uomaxi} be smooth across the
symmetry axis $r = 0$. One finds that the radial velocity $u_r$
and the axisymmetric vorticity $\omega_\theta$ should satisfy
the homogeneous Dirichlet condition on $\partial\Omega$, whereas
the vertical velocity $u_z$ satisfies the homogeneous Neumann
condition.

Since the pioneering work of Ukhovskii and Yudovitch \cite{UY}, and of
Ladyzhenskaya \cite{La}, it is well known that the axisymmetric
Navier-Stokes equations without swirl are globally well-posed for
velocities in (appropriate subspaces of) the energy class, see also
\cite{Ab,LMNP} for further results in this direction. In the recent
work \cite{GS}, the Cauchy problem for the vorticity equation
\eqref{omeq} is studied using scale invariant function spaces which
emphasize the analogy with the two-dimensional vorticity equation.
Following \cite{GS}, we equip the half-plane $\Omega$ with the
two-dimensional measure $\D r\dd z$, as opposed to the
three-dimensional measure $r\dd r\dd z$ which appears more naturally
in cylindrical coordinates. In particular, for any $p \in [1,\infty)$,
we denote by $L^p(\Omega)$ the space of measurable functions
$\omega_\theta : \Omega \to \R$ such that
\[
  \|\omega_\theta\|_{L^p(\Omega)} \,:=\, \left(\int_\Omega
  |\omega_\theta(r,z)|^p \dd r\dd z\right)^{1/p} \,<\, \infty\,.
\]
As usual, the limiting space $L^\infty(\Omega)$ is equipped with
the essential supremum norm. We also denote by $\cM(\Omega)$
the set of all real-valued finite regular measures on $\Omega$,
equipped with the total variation norm
\[
  \|\mu\|_\tv \,=\, \sup\left\{ \int_\Omega \phi \dd\mu \,\Big|\,
   \phi \in C_0(\Omega)\,,~ \|\phi\|_{L^\infty(\Omega)} \le 1
   \right\}\,,
\]
where $C_0(\Omega)$ denotes the set of all real-valued continuous
functions on $\Omega$ that vanish at infinity and on the boundary
$\partial\Omega$. Clearly $L^1(\Omega)$ is a closed subspace
of $\cM(\Omega)$, and $\|\mu\|_\tv = \|\omega_\theta\|_{L^1(\Omega)}$
if $\mu = \omega_\theta\dd r\dd z$ for some $\omega_\theta
\in L^1(\Omega)$.

As is proved in \cite[Theorem~1.3]{GS}, the Cauchy problem for the
axisymmetric vorticity equation \eqref{omeq} is globally well-posed if
the initial vorticity $\mu \in \cM(\Omega)$ is a finite measure whose
atomic part $\mu_{pp}$ satisfies $\|\mu_{pp}\|_\tv \le C_0 \nu$, where
$C_0 > 0$ is a universal constant. We are especially interested here
in the particular situation where $\mu = \Gamma\,\delta_{(\bar r,
\bar z)}$, which corresponds to a circular vortex filament of
strength $\Gamma \in \R$ and radius $\bar r > 0$, centered at the
origin in the affine plane $x_3 = \bar z \in \R$. In that case, we
have $\|\mu_{pp}\|_\tv = \|\mu\|_\tv = |\Gamma|$, so that the results
of \cite{GS} assert the existence of a unique global solution if
$|\Gamma| \le C_0 \nu$. On the other hand, for arbitrary values of the
circulation parameter $\Gamma$, existence of a global solution to
\eqref{omeq} was recently obtained by H.~Feng and the second author
\cite{FS}, using approximation techniques which however do not give
any information about uniqueness.

With this perspective in mind, our main result can now be stated
as follows:

\begin{thm}\label{main}
Fix $\Gamma \in \R$, $\bar r > 0$, $\bar z \in \R$, and $\nu > 0$.
Then the axisymmetric vorticity equation~\eqref{omeq} has a unique
global mild solution $\omega_\theta \in C^0((0,\infty),
L^1(\Omega) \cap L^\infty(\Omega))$ such that
\begin{equation}\label{omcond}
  \sup_{t > 0} \|\omega_\theta(t)\|_{L^1(\Omega)} \,<\, \infty\,,
  \qquad \hbox{and}\quad \omega_\theta(t)\dd r\dd z \weakto
  \Gamma \,\delta_{(\bar r,\bar z)} \quad \hbox{as }t \to 0\,.
\end{equation}
In addition, there exists a constant $C_1 > 0$, depending
only on the ratio $|\Gamma|/\nu$, such that the following
estimate holds:
\begin{equation}\label{short-time}
  \int_\Omega \,\Bigl|\omega_\theta(r,z,t) - \frac{\Gamma}{4\pi\nu t}
  \,e^{-\frac{(r-\bar r)^2+(z-\bar z)^2}{4\nu t}}\Bigr|\dd r \dd z
  \,\le\, C_1 \,|\Gamma|\,\frac{\sqrt{\nu t}}{\bar r}\,\log
  \frac{\bar r}{\sqrt{\nu t}}\,,
\end{equation}
as long as $\sqrt{\nu t} \le \bar r/2$.
\end{thm}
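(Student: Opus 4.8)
Existence of a solution satisfying \eqref{omcond} is provided by \cite{FS}, so the task is to establish uniqueness within this class together with the quantitative bound \eqref{short-time}. The plan is to pass to self-similar variables centred at the filament and to treat \eqref{omeq} as a perturbation of the two-dimensional vorticity equation whose self-similar attractor is the Lamb--Oseen vortex. Writing $s = \sqrt{\nu t}$, $\xi = (r-\bar r)/s$, $\zeta = (z-\bar z)/s$, $\tau = \log s$, and $\omega_\theta(r,z,t) = \frac{\Gamma}{\nu t}\,w(\xi,\zeta,\tau)$, the equation takes the form $\partial_\tau w = \cL w - \frac{\Gamma}{\nu}\,\Lambda[w] + \epsilon\,\cR(w,\tau)$, where $\cL = \Delta_{\xi,\zeta} + \frac12(\xi\partial_\xi+\zeta\partial_\zeta)+1$ is the Fokker--Planck operator, $\Lambda[w] = V[w]\cdot\nabla w$ is the planar self-advection ($V[w]$ the two-dimensional Biot--Savart velocity of $w$), and $\cR$ gathers the genuinely axisymmetric corrections arising from the curvature factors $1/r = \bar r^{-1}(1+\epsilon\xi)^{-1}$, the viscous term $\nu\omega_\theta/r^2$, and the discrepancy between the half-plane Biot--Savart law \eqref{diffBS} and its planar counterpart. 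The decisive features are that $\epsilon = s/\bar r$ is small as $\tau\to-\infty$, that $\cR$ is bounded on a Gaussian-weighted space uniformly in $\tau$ and in the size of $|\Gamma|/\nu$, and that the radial profile $G(\xi,\zeta) = \frac{1}{4\pi}e^{-(\xi^2+\zeta^2)/4}$ satisfies $\cL G = 0$ and $\Lambda[G]=0$, hence is an exact steady state of the limiting equation. In these variables the target estimate \eqref{short-time} is equivalent to $\|w(\tau)-G\|_{L^1(\R^2)} \le C_1\,\epsilon\log(1/\epsilon)$.

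First I would establish, for any mild solution satisfying \eqref{omcond} and without any smallness of $|\Gamma|/\nu$, uniform self-similar bounds: Gaussian upper bounds and $L^p$ control of $w(\cdot,\tau)$ as $\tau\to-\infty$. The mechanism is conservative: by \eqref{diffBS} the combination $u\cdot\nabla\omega_\theta - \frac{u_r}{r}\omega_\theta$ equals the two-dimensional divergence $\div\big((u_r,u_z)\,\omega_\theta\big)$ taken with respect to $\dd r\dd z$, so the advection is $L^1$-contractive however large $\Gamma/\nu$ may be. Combining this with the smoothing and Gaussian kernel bounds of the linear semigroup generated by $\nu(\Delta - r^{-2})$ with the boundary conditions of \eqref{diffBS} (in the framework of \cite{GS}), a Duhamel bootstrap upgrades the sole hypotheses $\sup_t\|\omega_\theta(t)\|_{L^1}<\infty$ and $\omega_\theta\,\dd r\dd z\weakto\Gamma\,\delta_{(\bar r,\bar z)}$ into the desired exponential localisation. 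These bounds keep $w$ in a compact, Gaussian-localised set, so that $\Lambda[w]$ and $\cR(w,\tau)$ are controlled in the weighted norms used below.

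Next, writing $w = G + v$, the perturbation obeys $\partial_\tau v = \big(\cL - \frac{\Gamma}{\nu}\Lambda_G\big)v + N(v) + \epsilon\,\cR(G+v,\tau)$, where $\Lambda_G v = V[G]\cdot\nabla v + V[v]\cdot\nabla G$ is the linearisation of the advection about $G$ and $N(v)$ is quadratic. The logarithm is the one already present in \eqref{V-LIA}: because the core's azimuthal velocity $V[G]$ decays only like $1/|(\xi,\zeta)|$, the self-induced drift of the ring --- the dominant term in $\cR$ --- has size $O(\epsilon\log(1/\epsilon))$ rather than $O(\epsilon)$, the logarithm coming from the cut-off of the Biot--Savart integral between the core scale $s$ and the ring radius $\bar r$. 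A Duhamel estimate for $v$, using the a priori bounds to absorb $N(v)$ and the spectral gap of the linearised operator (see below) so that the semigroup does not amplify the forcing, then propagates this size to $\|v(\tau)\|\lesssim\epsilon\log(1/\epsilon)$. Returning to the original variables and integrating against $\dd r\dd z$ yields \eqref{short-time}, valid precisely while $\epsilon = \sqrt{\nu t}/\bar r\le\frac12$.

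Uniqueness follows from the same linearised analysis, and this is where the main difficulty lies. For two solutions $\omega^1,\omega^2$ with the same circulation $\Gamma$ and the same filament data, the self-similar difference $d = w^1 - w^2$ satisfies a linear equation governed to leading order by $\cL - \frac{\Gamma}{\nu}\Lambda_G$. The essential input is that, on the complement of its one-dimensional kernel (the mass mode spanned by $G$), this operator has real spectrum bounded above by $-\tfrac12$, \emph{uniformly in $\Gamma/\nu$}: although $\frac{\Gamma}{\nu}\Lambda_G$ is antisymmetric with arbitrarily large norm, it does not push the real part of the spectrum to the right. This is exactly the phenomenon that places the problem outside standard perturbation theory, and it must be imported from the two-dimensional spectral theory. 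The kernel is harmless because the mass mode of $d$ vanishes --- both solutions carry the same circulation --- so $d$ lives in the stable subspace and decays at the gap rate. A Gronwall argument in the weighted norm, fed by the a priori bounds and the decay of the remainder as $\tau\to-\infty$, then forces $d\equiv 0$. I expect the verification of this uniform spectral gap, and its compatibility with the axisymmetric remainder $\cR$, to be the principal obstacle, with the non-perturbative a priori estimates of the first step a close second.
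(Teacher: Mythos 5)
Your overall strategy is genuinely the same as the paper's: self-similar variables centred at $(\bar r,\bar z)$, Gaussian a priori localisation, perturbation about the Oseen profile $G$, energy estimates whose uniformity in $\Gamma/\nu$ rests on the antisymmetry of the linearised advection in the Gaussian-weighted space $X$ (in the paper this is implemented through the cancellations $U^0_0\cdot\nabla w=0$ and $\int \tilde f\,(\BS^0[\tilde f]\cdot\nabla G)\,w =0$ from \cite[Lemma~4.8]{GW2}, which is the quadratic-form version of your ``uniform spectral gap''), and the $\epsilon\log(1/\epsilon)$ size of the axisymmetric correction coming from the comparison \eqref{Udiffest} of the two Biot--Savart laws. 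However, one essential idea is missing and one step would fail as written. The missing idea: to absorb the quadratic term $N(v)$ and start any Gronwall argument you need $\|v(\tau)\|_X$ \emph{small} as $\tau\to-\infty$, and nothing in your a priori bounds provides this --- the Gaussian estimate \eqref{Gauss-intro} only yields $\|w(\tau)-G\|_X = \cO(K_\eta(M))$, order one with constants that may be large when $|\Gamma|/\nu$ is large. The paper devotes Section~\ref{sec43} to precisely this point: a blow-up/compactness argument reduces the $\alpha$-limit set to ancient solutions of the two-dimensional problem, and the Liouville-type rigidity result \cite[Proposition~1.3]{GW2} (uniqueness of Lamb--Oseen for 2d Navier--Stokes with Dirac initial vorticity) then forces $\|f(t)-G\|_X\to 0$ as $t\to 0$. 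Without an ingredient of this kind your Duhamel estimate cannot be closed for large circulation, which is the whole difficulty.

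The step that fails is your uniqueness endgame. You assert that ``the mass mode of $d$ vanishes'' because both solutions carry the same circulation; this is false: $\int f_i\,\D R\,\D Z = \|\omega_\theta^{(i)}(t)\|_{L^1(\Omega)}/\Gamma$ is strictly less than $1$ for $t>0$ and is solution-dependent, so the mass of the difference is only superexponentially small (of order $e^{-1/(64\epsilon^2)}$, cf.\ \eqref{mass2}), and the cut-off near the boundary $R=-1/\epsilon$ generates further inhomogeneous remainders in the energy identity. A single gap-rate Gronwall inequality therefore yields only $E(t)=\cO(e^{-c/\epsilon^2})$, \emph{not} $E\equiv 0$. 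The paper resolves this with a two-tier argument: the sharp inequality \eqref{energy-decay2} gives superexponential smallness, and a second, cruder but remainder-free inequality $tE'(t)\le K E(t)$ (estimating the advection terms without extracting any power of $\epsilon$) transfers it to $E(t)\le (t/t_0)^K E(t_0)\to 0$ as $t_0\to 0$; coincidence for all times then invokes the well-posedness theorem of \cite{GS}. Finally, your first step also underestimates the a priori bounds: the Gaussian estimate is not a routine Duhamel bootstrap, because the Aronson-type argument needs $\int_0^T\|u_r(t)/r\|_{L^\infty(\Omega)}\dd t<\infty$, which does not follow from the scale-invariant estimates; the paper obtains it from positivity, from tightness of $\omega_\theta(t)\dd r\dd z$ as $t\to0$ --- itself a consequence of the nontrivial continuity up to $t=0$ of solutions of the adjoint equation, proved via the results of \cite{SSSZ} on divergence-free drifts --- and from the localisation of the initial measure away from the symmetry axis (Propositions~\ref{L1prop-near} and~\ref{L1prop-far}).
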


At this point a few comments are in order.

\medskip{\bf 1.} Theorem~\ref{main} can be seen as the
axisymmetric counterpart of Proposition~1.3 in \cite{GW2}, which
characterizes the Lamb-Oseen vortices among all two-dimensional
viscous flows. We recall that a mild to solution to
\eqref{omeq} is a solution of the associated integral
equation, see Definition~\ref{def-mild} below.

\smallskip{\bf 2.} As was already mentioned, existence of a
global mild solution to \eqref{omeq} satisfying \eqref{omcond} was
established in \cite{FS}. Uniqueness is thus the main new assertion in
Theorem~\ref{main}, together with the small time asymptotic expansion
\eqref{short-time}. It should be mentioned, however, that the techniques
developed in Section~\ref{sec4}, when properly adapted, can provide 
existence of a solution to \eqref{omeq} satisfying \eqref{omcond}, 
using a standard fixed point argument which also gives uniqueness 
in a restricted class.

\smallskip{\bf 3.} Assumptions \eqref{omcond} are the weakest ones
under which the conclusions of Theorem~\ref{main} are expected to
hold. Indeed, we recall that the $L^1$ norm of any solution to
\eqref{omeq} is a nonincreasing function of time, see
\cite[Lemma~5.1]{GS}, and it follows from \eqref{short-time} that
$\|\omega_\theta(t)\|_{L^1(\Omega)} \to |\Gamma|$ as $t \to 0$, hence
the first condition in \eqref{omcond} is clearly necessary. The second
hypothesis states that $\omega_\theta(t)$ converges to $\Gamma
\,\delta_{(\bar r,\bar z)}$ as $t \to 0$ in the weak-star topology of
$\cM(\Omega)$, which is usually referred to as the ``weak convergence
of measures''.  But since $\omega_\theta(t)$ is uniformly bounded in
$L^1(\Omega)$, it is equivalent to suppose that convergence holds in
the sense of distributions on $\Omega$, and this is arguably the
weakest way to specify the initial data.

\smallskip{\bf 4.} The short time estimate \eqref{short-time} is sharp
in the sense that the right-hand side cannot be replaced by $C_1
|\Gamma| \epsilon$, where $\epsilon = \sqrt{\nu t}/\bar r$ is the
aspect ratio at time $t$. This is because, in \eqref{short-time}, we
compare the solution $\omega_\theta(t)$ to a viscous vortex ring
located at a fixed point $(\bar r,\bar z)$ in cylindrical coordinates,
whereas we know that any vortex ring should move in the vertical
direction at a speed given approximately by \eqref{V-LIA}. As a
matter of fact, it is not difficult to verify that, if we replace
in \eqref{short-time} the fixed vertical coordinate $\bar z$ by
\[
  \bar z(t) \,=\, \bar z + \frac{\Gamma t}{4\pi \bar r}
  \,\log \frac{\bar r}{\sqrt{\nu t}}\,,
\]
then estimate \eqref{short-time} holds without the logarithmic term in
the right-hand side. More generally, the Gaussian vorticity profile in
\eqref{short-time} is only the first term in an asymptotic expansion
of the solution $\omega_\theta(t)$ which, in principle, can be computed
to arbitrary order in $\epsilon$.

\smallskip{\bf 5.} In estimate \eqref{short-time} convergence is
expressed in the $L^1$ norm for simplicity, but in the proof we use a
weighted $L^2$ norm in self-similar variables, which is considerably
stronger and also implies approximation results for the velocity field
associated with $\omega_\theta(t)$. On the other hand, we emphasize
that \eqref{short-time} is a short time result at fixed viscosity,
which cannot be used to describe the solution at fixed time $t > 0$ in
the vanishing viscosity limit $\nu \to 0$, because the constant $C_1$
in the right-hand side strongly depends on the ratio $|\Gamma|/\nu$.
Controlling the weakly viscous vortex ring over a fixed time interval
is different problem, which requires in particular constructing a
much more precise approximation of the solution $\omega_\theta(t)$.
We hope to address this interesting question in a future work.

\smallskip{\bf 6.}  It is worth emphasizing that the uniqueness
statement is proved only withing the class of axisymmetric
solutions. A natural question is whether uniqueness remains true among
all (reasonable) solutions of \eqref{NS3D} that approach the initial
vortex filament in a suitable sense as $t \to 0$. For instance, one
may assume that the velocity field $u(x,t)$ is smooth in $\R^3 \times
(0,\infty)$, and satisfies the scale-invariant estimates listed in
\eqref{deru-apriori} below. As for the associated vorticity
$\omega(x,t)$, one may suppose (motivated by~\cite{GM}, see also
Remark~\ref{BMO_rem} below) that a natural quantity such as
\begin{equation}\label{zz1}
  \sup_{x \in \R^3}\sup_{R > 0} \frac{1}{R} \int_{B_{x,R}} 
  |\omega(y,t)|\dd y
\end{equation} 
is uniformly bounded for $t > 0$, and that $\omega(t)$ approaches the
vortex filament in the sense of distributions as $t\to 0$. But even
under these strong assumptions, it seems to be a difficult open
problem to decide whether $u(x,t)$ has to be axisymmetric. It is
conceivable that the symmetry of the initial data can be broken and,
in addition to the axisymmetric solution, there is another solution
which is not axisymmetric. In fact, the same question already arises
for rectilinear vortices: the uniqueness problem when the initial
vorticity is a (vertical) straight vortex filament, considered within
the class of $x_3$-independent velocity fields of the form
$(u_1,u_2,0)$, is the same as the 2d uniqueness and has been solved
in~\cite{GG,GGL}, but uniqueness among reasonable classes of 3d vector
fields remains open.

The difficulties arise because the initial data do not belong to
functions spaces where perturbation theory gives existence and
uniqueness of local-in-time solutions for large data. Typical examples
of function spaces (for the velocity field) where large data can be
handled, locally in time, are the Lebesgue space $L^3(\R^3)$ or the
Besov space $\dot B^{-1+3/p}_{p,q}(\R^3)$ for $p\in(3,\infty)$ and
$q<\infty$. However, if the initial velocity belongs to
$\BMO^{-1}(\R^3)$, or if the vorticity lies in the Morrey space
$M^{3/2}(\R^3)$, the local-in-time well-posedness is completely
unclear for large data, even though the problem is globally well-posed
for small data \cite{GM,KT}. The vortex filaments considered in the
present paper are natural examples of initial data that belong to the
latter category of function spaces, but not to the former. Recently it
has been conjectured \cite{Jia-Sverak} that local-in-time
well-posedness, and indeed uniqueness, may fail already for initial
data $u_0$ that are compactly supported, smooth away from the origin,
and $(-1)$-homogeneous near the origin. A similar question can be 
raised a fortiori for vortex filaments, where the singularity of 
the initial data is not located at a single point but is spread over
a whole curve. 

\medskip Although the proof of Theorem~\ref{main} is the main purpose
of this paper, we establish on the way several auxiliary results that
have their own interest. As the existence part is already settled in
\cite{FS}, we concentrate on uniqueness and short time asymptotics. In
all what follows, we thus assume that we are given a mild solution
$\omega_\theta \in C^0((0,T),L^1(\Omega) \cap L^\infty(\Omega))$ of
\eqref{omeq} which is uniformly bounded in the space $L^1(\Omega)$. In
Section~\ref{sec2}, we recall some a priori estimates that were
obtained in \cite{GS}, and we show that $\omega_\theta(t)$ converges
weakly, as $t \to 0$, to some (uniquely defined) Radon measure $\mu
\in \cM(\Omega)$. Next, using recent results on linear parabolic
equations with singular divergence-free drifts \cite{SSSZ}, we prove
that the solutions of the adjoint equation to \eqref{omeq} are
continuous all the way to the initial time $t = 0$, even at the
symmetry axis $r = 0$. This nontrivial result allows us to deduce that
the family of measures $\omega_\theta(t)\dd r\dd z$ remains tight as
$t \to 0$, so that no mass can escape to infinity nor concentrate on
the symmetry axis.

In Section~\ref{sec3} we focus on the particular case where $\mu =
\Gamma \,\delta_{(\bar r,\bar z)}$ for some $(\bar r,\bar z) \in
\Omega$, assuming without loss of generality that $\Gamma > 0$.
We prove that the solution $\omega_\theta(t)$ is strictly positive
and satisfies, for any $\eta \in (0,1)$, the Gaussian bound
\begin{equation}\label{Gauss-intro}
  \omega_\theta(r,z,t) \,\le\, C\,\frac{\Gamma}{\nu t}\,
  \exp\Bigl(-\frac{1-\eta}{4\nu t}\bigl((r-\bar r)^2 +
  (z-\bar z)^2\bigr)\Bigr)\,,
\end{equation}
where $C > 0$ depends only on $\eta$ and on the ratio $\Gamma/\nu$. In
the two-dimensional case, estimates of the form \eqref{Gauss-intro}
were obtained by Osada \cite{Os}, see also \cite{CL}. Reproducing them
in the axisymmetric case is not straightforward, because the left-hand
side of \eqref{omeq} contains the zero order term $u_r\omega_\theta/r$
which is harmless only if one can prove that $\|u_r(t)/r\|_{L^\infty(\Omega)}$ 
is integrable in time. That property
does not follow from the scale invariant a priori estimates on the
solution, but we can show that it holds as soon as the support of the
initial measure $\mu$ is bounded away from the symmetry axis, which is
of course the case in our problem. Thus a minor modification of the
method presented in \cite{FS} allows us to establish the Aronson type
estimate \eqref{Gauss-intro}.

Section~\ref{sec4} is devoted to the actual proof of Theorem~\ref{main}.
To study the behavior of the solution near the location $(\bar r,\bar z)$
of the initial vortex filament, we introduce self-similar variables via
the transformation
\begin{equation}\label{f-intro}
  \omega_\theta(r,z,t) \,=\, \frac{\Gamma}{\nu t}\,f\Bigl(\frac{r-\bar r}
  {\sqrt{\nu t}}\,,\,\frac{z-\bar z}{\sqrt{\nu t}}\,,\,t\Bigr)\,.
\end{equation}
The rescaled vorticity $f(R,Z,t)$ defined by \eqref{f-intro} is
positive and, in view of \eqref{Gauss-intro}, bounded from above by a
Gaussian function. Using a compactness argument and a Liouville
theorem established in \cite{GW2}, we show that $f(t)$
converges as $t \to 0$ to the Gaussian $G$ defined by
\begin{equation}\label{G-intro}
  G(R,Z) \,=\, \frac{1}{4\pi} e^{-(R^2+Z^2)/4}\,, \qquad (R,Z) \in \R^2\,.
\end{equation}
Convergence holds in the weighted space $X = L^2(\R^2,G^{-1}\dd R\dd Z)$
which is continuously embedded in $L^1(\R^2)$, hence returning to
the original variables we deduce that the left-hand side of
\eqref{short-time} vanishes as $t \to 0$. We next use energy
estimates to show that the difference $\|f(t) - G\|_X$ is
$\cO(\epsilon|\log\epsilon|)$, where $\epsilon = \sqrt{\nu t}/\bar r$,
and this concludes the proof of \eqref{short-time}. Finally,
repeating the energy estimates for the difference of two
solutions satisfying the assumptions of Theorem~\ref{main},
we prove that $\|f_1(t) - f_2(t)\|_X = 0$ for sufficiently
small times, and invoking a well-posedness result from
\cite{GS} we conclude that the solutions coincide for all
times, which gives uniqueness.

The final Section~\ref{sec5} is an appendix were the proofs of
a few auxiliary results are collected for easy reference
in the text.

\medskip\noindent{\bf Acknowledgements.} This project started during
visits of the first named author to the University of Minnesota,
whose hospitality is gratefully acknowledged. Our research was
supported in part by grants DMS 1362467 and DMS 1159376 from the
National Science Foundation (V.S.), and by grant ``Dyficolti''
ANR-13-BS01-0003-01 from the French Ministry of Research (Th.G.).

%%%%%%%%%%%%%%%%%%%%%%%%%%%%%%%%%%%%%%%%%%%%%

\section{General properties of $L^1$-bounded solutions}\label{sec2}

In this section, we establish some preliminary results concerning mild
solutions of~\eqref{omeq} that are uniformly bounded in
$L^1(\Omega)$. We first recall a few notations and results from the
earlier works~\cite{FS,GS}.

\subsection{The linear semigroup and the axisymmetric
Biot-Savart law}\label{sec21}

As in \cite{GS}, we denote by $(S(t))_{t \ge 0}$ the evolution
semigroup defined by the linearized equation~\eqref{omeq} with
unit viscosity:
\begin{equation}\label{omlin}
  \partial_t \omega_\theta \,=\, \Bigl(\partial_r^2 + \partial_z^2
  + \frac{1}{r}\partial_r - \frac{1}{r^2}\Bigr)\omega_\theta\,,
\end{equation}
which is considered in the half-plane $\Omega = \{(r,z) \in \R^2 \,|\,
r > 0\,,~z \in\R\}$ with homogeneous Dirichlet boundary condition on
$\partial\Omega$. Using the explicit representation formula given in
\cite[Section~3]{GS}, one can show that the semigroup $(S(t))_{t \ge 0}$
is strongly continuous in $L^p(\Omega)$ for all $p \in [1,\infty)$,
and satisfies the same $L^p-L^q$ estimates as the heat semigroup in
$\R^2$. In particular, if $\omega_0 \in L^p(\Omega)$ for some $p \in
[1,\infty]$, then $S(t)\omega_0 \in L^q(\Omega)$ for all $t > 0$ and
all $q \in [p,\infty]$, and there exists a constant $C_2 > 0$ such that
\begin{equation}\label{Sest1}
  \|S(t)\omega_0\|_{L^q(\Omega)} \,\le\, \frac{C_2}{t^{\frac1p-\frac1q}}
  \,\|\omega_0\|_{L^p(\Omega)}\,, \qquad t > 0\,.
\end{equation}
Similarly, if $w = (w_r,w_z) \in L^p(\Omega)^2$, we have
\begin{equation}\label{Sest2}
  \|S(t)\div_* w\|_{L^q(\Omega)} \,\le\, \frac{C_2}{t^{\frac1p-\frac1q+\frac12}}
  \,\|w_0\|_{L^p(\Omega)}\,, \qquad t > 0\,,
\end{equation}
where $\div_* w = \partial_r w_r + \partial_z w_z$ denotes the
two-dimensional divergence of the vector field $w$.

On the other hand, if $\omega_\theta \in L^1(\Omega) \cap
L^\infty(\Omega)$, it is shown in \cite{FS,GS} that the
linear elliptic system~\eqref{diffBS}, with homogeneous
Dirichlet boundary condition for $u_r$ and homogeneous Neumann
condition for $u_z$, has a unique solution $u = (u_r,u_z) \in
C^0(\Omega)^2$ vanishing at infinity. Moreover $u \in L^q(\Omega)$
for all $q > 2$, and there exists a constant $C_3 > 0$ such that
\begin{equation}\label{BSest1}
  \|u\|_{L^\infty(\Omega)} \,\le\, C_3 \|\omega_\theta\|_{L^1(\Omega)}^{1/2}
  \,\|\omega_\theta\|_{L^\infty(\Omega)}^{1/2}\,.
\end{equation}
We call the map $\omega_\theta \mapsto u$ the axisymmetric Biot-Savart
law, and we occasionally denote $u = \BS[\omega_\theta]$. Explicit
formulas for $u$ in terms of $\omega_\theta$ can be found in
Section~2 of both references \cite{FS,GS}. We also recall the
following useful estimate: if $\omega_\theta \in L^1(\Omega)$ and
$\omega_\theta/r \in L^\infty(\Omega)$, then $u_r/r \in L^\infty(\Omega)$
and
\begin{align}\label{BSest2}
  \Bigl\|\frac{u_r}{r}\Bigr\|_{L^\infty(\Omega)} \,\le\,
  C_3\|\omega_\theta\|_{L^1(\Omega)}^{1/3}
  \,\|\omega_\theta/r\|_{L^\infty(\Omega)}^{2/3}\,.
\end{align}
Needless to say, both inequalities~\eqref{BSest1},~\eqref{BSest2}
are scale invariant.

Finally, it is important to note that, due to the divergence-free
condition in~\eqref{diffBS}, the evolution equation~\eqref{omeq}
can be written in the equivalent ``conservation form''
\begin{equation}\label{omeq2}
  \partial_t \omega_\theta + \div_*(u\,\omega_\theta)
  \,=\, \nu\Bigl(\partial_r^2 \omega_\theta + \partial_z^2 \omega_\theta
  + \partial_r\frac{\omega_\theta}{r}\Bigr)\,,
\end{equation}
where again $\div_*(u\,\omega_\theta) = \partial_r (u_r\omega_\theta) +
\partial_z (u_z\omega_\theta)$. We can thus define mild solutions
of~\eqref{omeq} in the following way:

\begin{df}\label{def-mild}
Given $T > 0$ and $\nu > 0$, we say that $\omega_\theta \in
C^0((0,T),L^1(\Omega) \cap L^\infty(\Omega))$ is a {\em mild
solution of~\eqref{omeq} on $(0,T)$} if the integral equation
\begin{equation}\label{omint}
  \omega_\theta(t) \,=\, S(\nu(t-t_0))\omega_\theta(t_0) - \int_{t_0}^t
  S(\nu(t-s))\div_*(u(s)\omega_\theta(s))\dd s
\end{equation}
is satisfied whenever $0 < t_0 < t < T$. Here $u(s) =
\BS[\omega_\theta(s)]$ for all $s \in (0,T)$.
\end{df}

In view of estimates~\eqref{Sest1},~\eqref{Sest2},~\eqref{BSest1},
it is clear that the right-hand side of~\eqref{omint} is
well-defined and belongs to $L^1(\Omega) \cap L^\infty(\Omega)$
for all admissible values of $t_0$ and $t$.

\subsection{A priori estimates}\label{sec22}

From now on, we always assume that $\omega_\theta \in C^0((0,T),
L^1(\Omega) \cap L^\infty(\Omega))$ is a mild solution of~\eqref{omeq}
on $(0,T)$ in the sense of Definition~\ref{def-mild}. We know from
\cite[Lemma~5.1]{GS} that the norm $\|\omega_\theta(t)\|_{L^1(\Omega)}$ is a 
nonincreasing function of time, and is even strictly decreasing unless
$\omega_\theta$ vanishes identically. We make the crucial assumption
that $\omega_\theta$ is {\em uniformly bounded in} $L^1(\Omega)$, so
that we can define
\begin{equation}\label{Mdef}
  M \,=\, \frac{1}{\nu}\lim_{t \to 0} \|\omega_\theta(t)\|_{L^1(\Omega)}
  \,<\, \infty\,.
\end{equation}
We thus have $\|\omega_\theta(t)\|_{L^1(\Omega)} \le M\nu$ for all $t \in (0,T)$. 
Under this hypothesis, the following a priori estimates hold. 

\begin{lem}\label{apriori-lem}
For any mild solution of~\eqref{omeq} on $(0,T)$ satisfying
\eqref{Mdef}, we have for all $t \in (0,T)$:
\begin{equation}\label{om-apriori}
  \Bigl\|\frac{\omega_\theta(t)}{r}\Bigr\|_{L^\infty(\Omega)} \,\le\,
  \frac{C_4 M}{t\sqrt{\nu t}}\,, \qquad \hbox{and}\quad
  \|\omega_\theta(t)\|_{L^\infty(\Omega)} \,\le\, \frac{C_5(M)M}{t}\,,
\end{equation}
where $C_4 > 0$ and $C_5 : [0,\infty) \to (0,\infty)$ is
increasing and polynomially bounded.
\end{lem}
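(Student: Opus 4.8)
The plan is to derive both bounds from a single structural fact: the relevant quantities solve \emph{divergence-free} drift--diffusion equations, and for such equations the Nash--Aronson Gaussian upper bound holds with a smoothing constant that does not depend on the (large, solution-dependent) drift. This is exactly the mechanism behind the universal a priori bounds for the two-dimensional vorticity equation, cf. \cite{Os,CL}, and the task is to transpose it to the axisymmetric setting. The key identity is that $\eta := \omega_\theta/r$ solves, in non-conservation form,
\[
  \partial_t \eta + u\cdot\nabla\eta \,=\, \nu\tilde\Delta\eta\,, \qquad
  \tilde\Delta := \Delta + \tfrac{2}{r}\partial_r
  = \partial_r^2 + \tfrac{3}{r}\partial_r + \partial_z^2\,,
\]
the nonlinear stretching term $\tfrac{u_r}{r}\omega_\theta$ having cancelled against the commutator of $u\cdot\nabla$ with multiplication by $r$; equivalently $S(t)\omega_\theta = r\,\tilde S(t)(\omega_\theta/r)$, where $\tilde S$ is generated by $\tilde\Delta$.

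First I would treat $\|\omega_\theta/r\|_{L^\infty}$. Writing $\tilde\Delta = \Delta + \tfrac{2}{r}\partial_r$ and reading $\Delta$ as the axisymmetric Laplacian on $\R^3$, the $\eta$-equation becomes $\partial_t\eta + b\cdot\nabla\eta = \nu\Delta\eta$ with $b := u - \tfrac{2\nu}{r}e_r$. Since $u$ is divergence free and the radial field $-\tfrac{2\nu}{r}e_r$ is divergence free for $r>0$ (its distributional divergence carries only a favourably-signed term on the axis), $\eta$ solves a three-dimensional divergence-free drift--diffusion equation, so the Nash--Aronson theory gives the universal bound
\[
  \|\eta(t)\|_{L^\infty} \,\le\, \frac{C}{(\nu(t-t_0))^{3/2}}\,\|\eta(t_0)\|_{L^1(\R^3)}\,,
\]
with $C$ an absolute constant. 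The decisive cancellation is that the extra factor $r$ in the three-dimensional measure absorbs the $1/r$ in $\eta$, so that $\|\eta\|_{L^1(\R^3)} = 2\pi\|\omega_\theta\|_{L^1(\Omega)}$; taking $t_0 = t/2$ and using $\|\omega_\theta(t/2)\|_{L^1(\Omega)} \le M\nu$ yields the first inequality in \eqref{om-apriori} with a universal constant $C_4$.

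Then I would bound $\|\omega_\theta\|_{L^\infty}$. Now $\omega_\theta$ itself solves $\partial_t\omega_\theta + u\cdot\nabla\omega_\theta = \nu(\Delta - \tfrac{1}{r^2})\omega_\theta + \tfrac{u_r}{r}\omega_\theta$, once more a divergence-free drift--diffusion (bounded drift $u$, favourably-signed potential $-\nu/r^2$) but now carrying the stretching zero-order term $\tfrac{u_r}{r}\omega_\theta$. By the same drift-independent argument, together with the $L^1$-$L^\infty$ bound \eqref{Sest1} for the free semigroup, the corresponding linear evolution obeys $\|\,\cdot\,(t)\|_{L^\infty}\le \tfrac{C}{\nu(t-t_0)}\|\,\cdot\,(t_0)\|_{L^1(\Omega)}$, which alone gives $CM/t$; it remains to absorb the stretching. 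This is precisely where the first bound enters: inserting it into \eqref{BSest2} gives $\|u_r(s)/r\|_{L^\infty}\le C_3(M\nu)^{1/3}\|\omega_\theta(s)/r\|_{L^\infty}^{2/3}\le C(M)/s$, so the stretching coefficient is time-integrable on the dyadic interval $[t/2,t]$, with $\int_{t/2}^t\|u_r/r\|_{L^\infty}\dd s\le C(M)\log 2$. A Duhamel/Gronwall argument on $[t/2,t]$ then absorbs the stretching at the cost of a multiplicative factor depending only on $M$, giving the second inequality with an increasing constant $C_5(M)$ (a careful bookkeeping yielding the asserted polynomial growth). There is no circularity here, because the first bound is obtained with no control whatsoever on $\|\omega_\theta\|_{L^\infty}$: Nash's smoothing constant does not see the size of $u$.

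The hard part is the Nash--Aronson step for the $\eta$-equation, whose operator $\tilde\Delta$ degenerates at the symmetry axis and whose effective drift $b$ contains the singular divergence-free field $-\tfrac{2\nu}{r}e_r$; making the Gaussian upper bound rigorous there requires the theory of parabolic equations with singular divergence-free drifts (as in \cite{SSSZ}, in the spirit of the two-dimensional estimates \cite{Os,CL}) and an approximation argument near $r=0$. It is exactly the drift-independence of these bounds that defeats the apparent supercriticality of a naive Duhamel iteration on $\omega_\theta$ --- where $\|u\|_{L^\infty}\sim\|\omega_\theta\|_{L^\infty}^{1/2}$ would feed back with a supercritical power --- and that accounts for the asymmetry between the two estimates: the $\eta$-equation is a genuine transport--diffusion with no zero-order term, giving the universal $C_4$, whereas the surviving stretching term in the $\omega_\theta$-equation forces the $M$-dependence of $C_5$. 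Since these a priori bounds are essentially contained in \cite{GS}, the argument may alternatively be quoted from there.
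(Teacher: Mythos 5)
Your proposal is correct in substance and follows essentially the same route as the paper, whose own proof of this lemma consists of rescaling to $\nu=1$, quoting exactly the two facts you reconstruct, and letting $t_0\to 0$ using \eqref{Mdef}: the first bound \eqref{apriori1} is Nash's method applied to the equation for $\omega_\theta/r$, cited from \cite[Lemma~3.8]{FS}, with a universal constant $C_4$ for precisely the reason you identify (the stretching term cancels; the drift is divergence-free up to the field $-\frac{2\nu}{r}e_r$, whose distributional divergence is a negative measure supported on the axis and hence carries the favourable sign both in the forward Nash iteration and in the dual step; and the factor $r$ in the three-dimensional measure turns the $L^1(\R^3)$ norm of $\omega_\theta/r$ into $2\pi\|\omega_\theta\|_{L^1(\Omega)}$), while the second bound is cited from \cite[Proposition~5.3]{GS}. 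Your intermediate estimate $\|u_r(s)/r\|_{L^\infty(\Omega)}\le C(M)/s$ via \eqref{BSest2} is exactly the paper's Corollary~\ref{apriori-cor}, and your remark that no control of $\|u\|_{L^\infty}$ enters the first bound (so there is no circularity) is the structural point that makes the whole scheme run.

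The one step that, as written, does not deliver what the lemma asserts is the polynomial boundedness of $C_5$. Absorbing the stretching term by Gronwall or comparison on the dyadic interval $[t/2,t]$ produces the multiplicative factor $\exp\bigl(\int_{t/2}^t\|u_r(s)/r\|_{L^\infty}\dd s\bigr)\le 2^{C_6 M}$, which is exponential in $M$; no bookkeeping can improve this on a fixed dyadic interval, because the integral of the stretching coefficient over $[t/2,t]$ is genuinely of order $M$. The repair is to shrink the absorption interval: run the Duhamel/comparison argument on $[(1-\theta)t,\,t]$ with $\theta=(1+M)^{-1}$, so that $\int_{(1-\theta)t}^{t}\|u_r(s)/r\|_{L^\infty}\dd s\le C_6 M\log(1+1/M)\le C_6$ stays bounded, while the drift-independent $L^1$--$L^\infty$ smoothing from $s_0=(1-\theta)t$ costs $C\|\omega_\theta(s_0)\|_{L^1(\Omega)}/(\nu t\theta)\le CM(1+M)/t$; this yields $C_5(M)\le C(1+M)$, which is polynomial (indeed linear) in $M$. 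Alternatively one may simply quote \cite[Proposition~5.3]{GS}, as the paper does, which supplies the polynomially bounded constant directly. Everything else in your sketch, including the caveat that the singular drift at the symmetry axis requires an approximation argument to justify the Nash--Aronson bound rigorously, is accurate.
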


\begin{proof}
It is sufficient to prove~\eqref{om-apriori} when $\nu = 1$,
because the general case then follows by a simple rescaling
argument. Due to parabolic smoothing, if $\omega_\theta \in
C^0((0,T),L^1(\Omega) \cap L^\infty(\Omega))$ is a mild solution
of~\eqref{omeq}, then $\omega_\theta$ is smooth on $\Omega \times
(0,T)$ and satisfies~\eqref{omeq} in the classical sense.
Applying Nash's method to the evolution equation satisfied
by the quantity $\omega_\theta/r$, one obtains the
following estimate:
\begin{equation}\label{apriori1}
  \Bigl\|\frac{\omega_\theta(t)}{r}\Bigr\|_{L^\infty(\Omega)} \,\le\,
  \frac{C_4}{(t-t_0)^{3/2}}\,\|\omega_\theta(t_0)\|_{L^1(\Omega)}
  \,\le\, \frac{C_4 M}{(t-t_0)^{3/2}}\,,
\end{equation}
for all $t \in (0,T)$ and all $t_0 \in (0,t)$, see
\cite[Lemma~3.8]{FS}. Thus taking the limit $t_0 \to 0$
we arrive at the first inequality in~\eqref{om-apriori}.
Similarly, it follows from \cite[Proposition~5.3]{GS} that
\begin{equation}\label{apriori2}
  \|\omega_\theta(t)\|_{L^\infty(\Omega)} \,\le\,
  \frac{C_5(\|\omega_\theta(t_0)\|_{L^1(\Omega)})}{t-t_0}\,
  \|\omega_\theta(t_0)\|_{L^1(\Omega)} \,\le\, \frac{C_5(M)M}{t-t_0}\,,
\end{equation}
where the function $C_5 : [0,\infty) \to (0,\infty)$ is increasing and
polynomially bounded. Taking again the limit $t_0 \to 0$ yields the
second inequality in~\eqref{om-apriori}.
\end{proof}

Combining Lemma~\ref{apriori-lem} with estimates~\eqref{BSest1},
\eqref{BSest2}, we easily obtain:

\begin{cor}\label{apriori-cor}
Under the assumptions of Lemma~\ref{apriori-lem}, we have
for all $t \in (0,T)$:
\begin{equation}\label{u-apriori}
  \Bigl\|\frac{u_r(t)}{r}\Bigr\|_{L^\infty(\Omega)} \,\le\,
  \frac{C_6 M}{t}\,, \qquad \hbox{and}\quad
  \|u(t)\|_{L^\infty(\Omega)} \,\le\, C_7(M)M \sqrt{\frac{\nu}{t}}\,,
\end{equation}
where $C_6 = C_3 C_4^{2/3}$ and $C_7(M) = C_3 C_5(M)^{1/2}$.
\end{cor}

We also have scale-invariant estimates on the derivatives of the
vorticity or the velocity. For instance, Proposition~5.5 in \cite{GS}
asserts that
\begin{equation}\label{derom-apriori}
  \|\nabla \omega_\theta(t)\|_{L^\infty(\Omega)} \,\le\,
  \frac{C_8(M)M}{t\sqrt{\nu t}}\,, \qquad 0 < t < T\,.
\end{equation}
More generally, the velocity $u = u_r e_r + u_z e_z$ (considered as a function 
of $x \in \R^3$) satisfies, for all $k,\ell \in \N$,
\begin{equation}\label{deru-apriori}
  \|\partial_t^k \nabla_x^\ell u(t)\|_{L^\infty(\R^3)} \,\le\,
  \frac{C_{k\ell}(M)M}{t^k (\nu t)^{\ell/2}}\sqrt{\frac{\nu}{t}}\,,
  \qquad 0 < t < T\,.
\end{equation}

\subsection{The trace of the solution at initial time}\label{sec23}

Using the a priori estimates established in the previous section,
we now prove that any mild solution satisfying~\eqref{Mdef}
converges as $t \to 0$ to some finite measure $\mu \in \cM(\Omega)$.

\begin{prop}\label{trace-prop}
If $\omega_\theta \in C^0((0,T),L^1(\Omega) \cap L^\infty(\Omega))$ is
a mild solution of~\eqref{omeq} on $(0,T)$ satisfying~\eqref{Mdef},
there exists a unique measure $\mu \in \cM(\Omega)$ such that
$\omega_\theta(t)\dd r\dd z \weakto \mu$ as $t \to 0$.
\end{prop}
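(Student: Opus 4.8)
The plan is to prove convergence of the scalar quantities $\int_\Omega \phi\,\omega_\theta(t)\dd r\dd z$ as $t \to 0$ for every $\phi$ in a dense subclass of $C_0(\Omega)$, and then to promote this to weak-star convergence using the uniform bound $\|\omega_\theta(t)\|_{L^1(\Omega)} \le M\nu$. As test functions I would take $\phi \in C_c^\infty(\Omega)$, whose supports are compact subsets of the open half-plane, hence bounded away both from the symmetry axis $r = 0$ and from infinity; recall that $C_c^\infty(\Omega)$ is dense in $C_0(\Omega)$ for the supremum norm.

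First I would use the smoothness of $\omega_\theta$ on $\Omega \times (0,T)$ (parabolic regularity, as already invoked in the proof of Lemma~\ref{apriori-lem}) together with the conservation form~\eqref{omeq2} to derive, for each fixed $\phi \in C_c^\infty(\Omega)$ and each $t \in (0,T)$, the identity
\[
  \frac{\D}{\D t}\int_\Omega \phi\,\omega_\theta\dd r\dd z \,=\,
  \int_\Omega \bigl(u_r\,\partial_r\phi + u_z\,\partial_z\phi\bigr)
  \omega_\theta\dd r\dd z + \nu\int_\Omega \Bigl(\partial_r^2\phi +
  \partial_z^2\phi - \frac{\partial_r\phi}{r}\Bigr)\omega_\theta\dd r\dd z\,.
\]
This follows by multiplying~\eqref{omeq2} by $\phi$ and integrating by parts; since $\supp\phi$ is a compact subset of $\Omega$, no boundary terms appear and the weight $1/r$ is bounded on $\supp\phi$.

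The key step is to check that the right-hand side is integrable in time near $t = 0$. The diffusive contribution is harmless, being bounded uniformly in $t$ by $C\|\phi\|_{C^2}\,\nu\,\|\omega_\theta(t)\|_{L^1(\Omega)} \le C\|\phi\|_{C^2}\,M\nu^2$. The nonlinear drift term is the delicate one, and here I would invoke the velocity estimate $\|u(t)\|_{L^\infty(\Omega)} \le C_7(M)M\sqrt{\nu/t}$ from Corollary~\ref{apriori-cor}, which yields
\[
  \Bigl|\int_\Omega \bigl(u_r\,\partial_r\phi + u_z\,\partial_z\phi\bigr)
  \omega_\theta\dd r\dd z\Bigr| \,\le\, \|\nabla\phi\|_{L^\infty(\Omega)}
  \,\|u(t)\|_{L^\infty(\Omega)}\,\|\omega_\theta(t)\|_{L^1(\Omega)}
  \,\le\, C\,\|\nabla\phi\|_{L^\infty(\Omega)}\,M^2\nu^{3/2}\,t^{-1/2}\,.
\]
Because $t^{-1/2}$ is integrable at the origin, the whole right-hand side lies in $L^1$ near $t=0$; hence for $0 < t_1 < t_2$ the increment $\bigl|\int_\Omega \phi\,\omega_\theta(t_2) - \int_\Omega\phi\,\omega_\theta(t_1)\bigr|$ is dominated by $\int_{t_1}^{t_2}(\cdots)\dd s \to 0$, and $t \mapsto \int_\Omega \phi\,\omega_\theta(t)\dd r\dd z$ is Cauchy, with some limit $L(\phi)$, as $t \to 0$.

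Finally I would extend $L$ to all of $C_0(\Omega)$ by a density argument: for $\phi \in C_0(\Omega)$ and $\varepsilon > 0$ pick $\psi \in C_c^\infty(\Omega)$ with $\|\phi-\psi\|_{L^\infty(\Omega)} < \varepsilon$, and use $\|\omega_\theta(t)\|_{L^1(\Omega)} \le M\nu$ to control $\int_\Omega(\phi-\psi)\omega_\theta(t)$ uniformly, so that $\int_\Omega \phi\,\omega_\theta(t)\dd r\dd z$ remains Cauchy as $t \to 0$. The resulting functional $L$ is linear with $|L(\phi)| \le M\nu\,\|\phi\|_{L^\infty(\Omega)}$, so by the Riesz representation theorem there is a unique $\mu \in \cM(\Omega)$ with $L(\phi) = \int_\Omega \phi\dd\mu$ for all $\phi$; this is exactly $\omega_\theta(t)\dd r\dd z \weakto \mu$, the weak-star limit being automatically unique. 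The only genuinely quantitative input, and thus the main obstacle, is the time-integrability of the nonlinear term near $t = 0$, which rests entirely on the scale-invariant bound $\|u(t)\|_{L^\infty(\Omega)} = \cO(\sqrt{\nu/t})$ of Corollary~\ref{apriori-cor}; the remaining steps are soft functional analysis.
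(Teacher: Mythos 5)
Your proposal is correct and takes essentially the same route as the paper: the heart of both arguments is the identity obtained by testing the conservation form~\eqref{omeq2} against a compactly supported test function and the observation that the drift contribution is $\cO(t^{-1/2})$, hence time-integrable near $t=0$, thanks to the velocity bound of Corollary~\ref{apriori-cor}. The only difference is in the concluding soft step — you extend the limit functional from $C_c^\infty(\Omega)$ to $C_0(\Omega)$ by density and invoke Riesz--Markov, whereas the paper identifies the distributional limit with a weak-star cluster point provided by Banach--Alaoglu — and both variants are valid.
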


\begin{proof}
We assume without loss of generality that $\nu = 1$. We first show
that $\omega_\theta(t)$ has a limit as $t \to 0$ in $D'(\Omega)$,
the space of all distributions on $\Omega$. Let $\phi \in
C^2_c(\Omega)$ be a $C^2$ function with compact support in
$\Omega$. Using~\eqref{omeq2} we find
\[
  \frac{\D}{\D t}\int_\Omega \phi\,\omega_\theta\dd r\dd z \,=\,
  \int_\Omega \Bigl(u\cdot\nabla\phi + \partial_r^2\phi +
  \partial_z^2\phi -\frac1r \partial_r\phi\Bigr)\omega_\theta\dd r\dd z\,,
\]
for all $t \in (0,T)$. As $\|\omega_\theta(t)\|_{L^1(\Omega)} \le M$
we have
\[
  \Bigl|\int_\Omega \Bigl(\partial_r^2\phi + \partial_z^2\phi
  -\frac1r \partial_r\phi\Bigr)\omega_\theta\dd r\dd z\Bigr|
  \,\le\, C M \|\nabla^2 \phi\|_{L^\infty(\Omega)}\,,
\]
and using estimate~\eqref{u-apriori} we also obtain
\[
  \Bigl|\int_\Omega u\cdot\nabla\phi\dd r\dd z\Bigr| \,\le\,
  \frac{C_7(M)M}{t^{1/2}}\,\|\nabla \phi\|_{L^\infty(\Omega)}\,.
\]
This shows that the quantity $\int_\Omega \phi(r,z)\,\omega_\theta(r,z,t)
\dd r\dd z$ has a limit as $t \to 0$ for any $\phi \in C^2_c(\Omega)$,
hence $\omega_\theta(t)$ converges in $D'(\Omega)$ to some limit
which we denote by $\mu$.

On the other hand, since $\omega_\theta(t)$ is uniformly bounded in
$L^1(\Omega)$ by assumption, the Banach-Alaoglu theorem asserts that,
for any sequence $t_m \to 0$, there exists a subsequence $t_m' \to 0$
and a measure $\bar\mu \in \cM(\Omega)$ such that $\omega_\theta(t_m')
\dd r\dd z\weakto \bar\mu$ as $m \to \infty$. But weak-star
convergence in $\cM(\Omega)$ implies convergence in $D'(\Omega)$, so
we necessarily have $\bar\mu = \mu$, hence $\mu \in
\cM(\Omega)$. Moreover, this shows that the weak-star limit is
independent of the choice of the subsequence $t_m' \to 0$, so that in
fact $\omega_\theta(t_m)\dd r\dd z \weakto \mu$ as $m \to
\infty$. Since the sequence $t_m \to 0$ was arbitrary, this is the
desired result.
\end{proof}

\begin{cor}\label{mild-cor}
Under the assumptions of Proposition~\ref{trace-prop},
one has
\begin{equation}\label{omint2}
  \omega_\theta(t) \,=\, S(\nu t)\mu - \int_0^t
  S(\nu(t-s))\div_*(u(s)\omega_\theta(s))\dd s\,,
  \qquad 0 < t < T\,.
\end{equation}
\end{cor}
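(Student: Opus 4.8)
The plan is to pass to the limit $t_0 \to 0$ in the Duhamel identity~\eqref{omint}, which holds for every $0 < t_0 < t < T$. I would fix $t \in (0,T)$ throughout and establish~\eqref{omint2} as an equality in $L^1(\Omega)$; since both sides also lie in $L^\infty(\Omega)$, this is enough to identify them in $L^1(\Omega) \cap L^\infty(\Omega)$. The argument splits into showing that the integral term converges, that the linear term therefore converges too, and that its limit is exactly $S(\nu t)\mu$.

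First I would show that the integral in~\eqref{omint} is absolutely convergent near $s = 0$ in $L^1(\Omega)$. Writing $w(s) = u(s)\omega_\theta(s) \in L^1(\Omega)^2$ and applying~\eqref{Sest2} with $p = q = 1$, together with $\|u(s)\|_{L^\infty(\Omega)} \le C_7(M)M\sqrt{\nu/s}$ from~\eqref{u-apriori} and $\|\omega_\theta(s)\|_{L^1(\Omega)} \le M\nu$ from~\eqref{Mdef}, one obtains
\[
  \|S(\nu(t-s))\div_*(u(s)\omega_\theta(s))\|_{L^1(\Omega)} \,\le\,
  \frac{C}{\sqrt{(t-s)\,s}}\,,
\]
with $C$ depending on $M$ and $\nu$, and the right-hand side is integrable on $(0,t)$. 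Hence $\int_{t_0}^t S(\nu(t-s))\div_*(u(s)\omega_\theta(s))\dd s$ converges in $L^1(\Omega)$, as $t_0 \to 0$, to the integral $\int_0^t(\cdots)\dd s$ of~\eqref{omint2}. In view of~\eqref{omint}, the linear term $S(\nu(t-t_0))\omega_\theta(t_0)$ then also converges in $L^1(\Omega)$, its limit being $\omega_\theta(t)$ plus that integral; it remains only to identify this limit with $S(\nu t)\mu$.

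For the identification I would use the explicit kernel representation of the semigroup from~\cite[Section~3]{GS}: writing $(S(\tau)\omega_0)(x) = \int_\Omega p_\tau(x,y)\,\omega_0(y)\dd y$ and $(S(\tau)\mu)(x) = \int_\Omega p_\tau(x,y)\dd\mu(y)$, where $p_\tau(x,\cdot)$ is the Dirichlet kernel. For fixed $x \in \Omega$ and $\tau = \nu t > 0$ one has $p_{\nu t}(x,\cdot) \in C_0(\Omega)$, as it is continuous, decays at infinity, and vanishes on $\partial\Omega$. I would then split $(S(\nu(t-t_0))\omega_\theta(t_0))(x) - (S(\nu t)\mu)(x)$ into
\[
  \int_\Omega \bigl[p_{\nu(t-t_0)}(x,y) - p_{\nu t}(x,y)\bigr]\,\omega_\theta(y,t_0)\dd y
  \qquad\text{and}\qquad
  \int_\Omega p_{\nu t}(x,y)\,\omega_\theta(y,t_0)\dd y - \int_\Omega p_{\nu t}(x,y)\dd\mu(y)\,.
\]
The second contribution tends to $0$ by the weak-star convergence $\omega_\theta(t_0)\dd r\dd z \weakto \mu$ of Proposition~\ref{trace-prop}, applied with the test function $p_{\nu t}(x,\cdot) \in C_0(\Omega)$; the first is bounded by $M\nu\,\|p_{\nu(t-t_0)}(x,\cdot) - p_{\nu t}(x,\cdot)\|_{L^\infty(\Omega)}$ and vanishes by continuity of $\tau \mapsto p_\tau(x,\cdot)$ in the sup norm at $\tau = \nu t$. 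Thus $S(\nu(t-t_0))\omega_\theta(t_0) \to S(\nu t)\mu$ pointwise on $\Omega$. Since an $L^1$-convergent sequence has an almost-everywhere-convergent subsequence, the $L^1$ limit and the pointwise limit agree a.e., so the two limits coincide; substituting back into~\eqref{omint} and letting $t_0 \to 0$ yields~\eqref{omint2}.

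The step I expect to be the main obstacle is this last identification of the limit, i.e.\ justifying $S(\nu(t-t_0))\omega_\theta(t_0) \to S(\nu t)\mu$. It relies on the two kernel properties invoked above — continuity of $\tau \mapsto p_\tau(x,\cdot)$ in $L^\infty(\Omega)$ near $\tau = \nu t$, and the membership $p_{\nu t}(x,\cdot) \in C_0(\Omega)$ (in particular the vanishing on $\partial\Omega$, i.e.\ at the symmetry axis) — both of which follow from the representation formula of~\cite{GS} but are the only genuinely non-routine ingredients. By contrast, the absolute convergence of the Duhamel integral is immediate from the a priori estimates of Section~\ref{sec22}, and the passage from pointwise to almost-everywhere identity is standard.
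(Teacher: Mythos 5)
Your proposal is correct and takes essentially the same approach as the paper: pass to the limit $t_0 \to 0$ in \eqref{omint}, control the Duhamel integral with \eqref{Sest2}, \eqref{BSest1}, \eqref{u-apriori}, identify the limit of the linear term as $S(\nu t)\mu$ by testing the kernel $p_{\nu t}(x,\cdot) \in C_0(\Omega)$ against the weak-star convergence of Proposition~\ref{trace-prop}, and reconcile the pointwise and $L^1$ limits via uniqueness of limits. The only cosmetic difference is that you handle the term $\bigl(S(\nu(t{-}t_0))-S(\nu t)\bigr)\omega_\theta(t_0)$ pointwise through sup-norm continuity of $\tau \mapsto p_\tau(x,\cdot)$, whereas the paper proves the $L^1$ bound $C\,(t_0/t)\,\|\omega_\theta(t_0)\|_{L^1(\Omega)}$; both suffice for the final identification.
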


\begin{proof}
We again assume that $\nu = 1$. For any fixed $t \in (0,T)$, our goal
is to take the limit $t_0 \to 0$ in the integral representation
\eqref{omint}, where both sides are considered as elements of
$L^1(\Omega)$. The integral term is easily controlled using estimates
\eqref{Sest2},~\eqref{BSest1}, and \eqref{u-apriori}. We find
\begin{align*}
  \int_{t_0}^t \|S(t-s)\div_*(u(s)\omega_\theta(s))\|_{L^1(\Omega)}\dd s
  \,&\le\, \int_{t_0}^t \frac{C_2}{(t-s)^{1/2}}\, \|u(s)\|_{L^\infty(\Omega)}
  \|\omega_\theta(s)\|_{L^1(\Omega)} \dd s\\
  \,&\le\, \int_0^t \frac{C_2 M}{(t-s)^{1/2}}\, \frac{C_7(M)M}{s^{1/2}}\dd s 
  \,=\, \pi C_2 C_7(M) M^2 \,<\, \infty\,,
\end{align*}
hence the integral term in~\eqref{omint} has a limit in $L^1(\Omega)$
as $t_0 \to 0$. To treat the other term, we decompose
\[
  S(t-t_0)\omega_\theta(t_0) \,=\, \Bigl(S(t-t_0) - S(t)\Bigr)
  \omega_\theta(t_0) + S(t)\omega_\theta(t_0)\,.
\]
Proceeding as in \cite[Section~3]{GS}, it is straightforward to prove that
\[
  \|(S(t-t_0) - S(t))\omega_\theta(t_0)\|_{L^1(\Omega)}
  \,\le\, C\,\frac{t_0}{t}\|\omega_\theta(t_0)\|_{L^1(\Omega)}
  \,\xrightarrow[~t_0\to 0~]{}\, 0\,.
\]
Moreover, using Proposition~\ref{trace-prop} and the explicit
representation formula for the semigroup $S(t)$ given in
\cite[Section~3]{GS}, we immediately obtain
\[
  \Bigl(S(t)\omega_\theta(t_0)\Bigr)(r,z) \,\xrightarrow[t_0\to 0]{}\,
  (S(t)\mu)(r,z)\,, \qquad \hbox{for all } (r,z) \in \Omega\,,
\]
and since the left-hand side of \eqref{omint} does not depend on $t_0$ 
we deduce that convergence holds in $L^1(\Omega)$ too. So taking
the limit $t_0 \to 0$ in~\eqref{omint} we obtain~\eqref{omint2}.
\end{proof}

\begin{rem}\label{gen-rem}
In view of Proposition~\ref{trace-prop}, a natural question
is whether a mild solution of~\eqref{omeq} on $(0,T)$ satisfying
\eqref{Mdef} is uniquely determined by its ``trace at initial time'',
namely by the measure $\mu$. Using the results established in \cite{GS},
one can show that the answer is positive if the atomic part of $\mu$
is small enough compared to viscosity. In the present paper, we focus
on the particular case where $\mu$ is a single Dirac mass.
The general case is still open.
\end{rem}

\subsection{The adjoint equation}\label{sec24}

In this section we consider the formal adjoint equation to~\eqref{omeq}
or~\eqref{omeq2} with respect to the scalar product in
$L^2(\Omega,\dd r\dd z)$, namely
\begin{equation}\label{adjeq}
  \partial_t \phi + u\cdot\nabla\phi + \nu\Bigl(\Delta \phi
  - \frac2r\,\partial_r \phi\Bigr) \,=\, 0\,.
\end{equation}
Eq.~\eqref{adjeq} can be solved backwards in time, imposing
simultaneously Dirichlet and Neumann boundary conditions on $\partial
\Omega$. There are several ways to see this. For example we can start
from the three-dimensional vorticity equation
\begin{equation}\label{v1}
  \omega_t + [u,\omega]-\nu \Delta \omega \,=\, 0\,,
  \qquad x \in \R^3\,,
\end{equation}
where we use the Lie bracket notation $[u,\omega]= (u\cdot\nabla)
\omega-(\omega\cdot\nabla)u$, and consider it as a linear equation for
(general) vector fields $\omega$, with $u$ given.  If we take the
adjoint equation to~\eqref{v1} for (general) vector fields $\Phi$,
given by the requirement that $\int_{\R^3} \omega(x,t)\cdot\Phi(x,t)\dd x$
be constant in time, we obtain
\begin{equation}\label{v2}
  \Phi_t + L_u\Phi + \nu\Delta \Phi \,=\, 0\,, \qquad x \in \R^3\,,
\end{equation}
where $L_u\Phi$ is the Lie derivative of $\Phi$ along the vector field
$u$ when $\Phi$ is considered as a $1$-form. In coordinates we have
$(L_u\Phi)_i = u_j\partial_j \Phi_i + \Phi_j\partial_iu_j$, where we sum
over repeated indices.

Due to estimates~\eqref{deru-apriori} we see from the standard linear
parabolic theory that Eq.~\eqref{v2} can be solved backwards in time,
for any bounded divergence-free ``terminal data'' $\Phi_1$ at time
$t_1 \in (0,T)$, and the solution $\Phi$ will be smooth in the open set
$\R^3\times(0,t_1)$. When $u$ is axisymmetric with no swirl, then the
fields $\omega$ of the form $\omega=\omega_\theta(r,z,t) e_\theta$ are
preserved by~\eqref{v1} (considered as a linear equation for
$\omega$), and the same is true for~\eqref{v2} and fields
$\Phi=\Phi_\theta(r,z,t)\,e_\theta$. For $\omega$ and $\Phi$ of this
form we have
\[
  \int_{\R^3} \omega(x,t)\cdot\Phi(x,t)\,\dd x \,=\, \int_{\Omega} 
  \omega_\theta(r,z,t)\Phi_\theta(r,z,t)2\pi r \dd r\dd z\,,
\]
and therefore in~\eqref{adjeq} we should take
\begin{equation}\label{v4}
  \phi \,=\, 2\pi r\Phi_\theta\,.
\end{equation}
For the solutions we consider here, equation~\eqref{adjeq} is the same
as~\eqref{v2} after the change of variables~\eqref{v4}. Now, as $\Phi$
is smooth in $\R^3\times (0,t_1)$, we must have $\Phi_\theta = r g$
for some smooth function $g=g(r,z,t)$ that is bounded on $\bar\Omega$
for any $t \in (0,t_1)$, and we conclude that the natural boundary
condition for $\phi$ at $r=0$ is that both $\phi$ and $\partial_r\phi$
vanish.

Alternatively, it is easy to verify that Eq.~\eqref{adjeq} is
well-posed (backwards in time) under the Neumann boundary condition
$\partial_r\phi(0,z,t) = 0$, and that the boundary data $a(z,t) =
\phi(0,z,t)$ satisfy the equation
\[
  \partial_t a(z,t) + u_z(0,z,t)\partial_z a(z,t) + \nu
  \partial_z^2 a(z,t) \,=\, 0\,, \qquad z \in \R\,, \qquad
  t \in (0,T)\,.
\]
In particular, if $\phi$ vanishes on the boundary $\partial \Omega$ at
terminal time $t_1$, the same property holds for all $t \in (0,t_1)$,
and as demonstrated above this is the natural condition under which
\eqref{adjeq} can be considered as the adjoint equation to
\eqref{omeq} in $\Omega$.

From now on, given $0 < t_0 < t_1 < T$ and $\phi_1 \in C_0(\Omega)$, we
denote by $\phi(r,z,t)$, for $(r,z) \in \Omega$ and $t \in (0,t_1)$,
the unique solution of \eqref{adjeq} with ``terminal condition''
$\phi(\cdot,\cdot,t_1) = \phi_1$. The main result of this subsection
is:

\begin{prop}\label{adj-prop}
Assume $u$ is the velocity field associated with a mild solution 
$\omega_\theta$ of \eqref{omeq} satisfying \eqref{Mdef}. Given $t_1 \in 
(0,T)$ and $\phi_1 \in C_0(\Omega)$, the unique solution $\phi$ of the 
(linear) adjoint equation~\eqref{adjeq} with terminal condition 
$\phi(\cdot,\cdot,t_1) = \phi_1$ can be extended to a continuous function 
on $\bar \Omega \times [0,t_1]$ satisfying $\phi(0,0,0) = 0$. Moreover 
one has $\phi(\cdot,\cdot,t) \in C_0(\Omega)$ for all $t \in [0,t_1]$, 
and
\begin{equation}\label{unifconvphi}
  \sup_{(r,z) \in \Omega} |\phi(r,z,t) - \phi(r,z,0)|
  \,\xrightarrow[t \to 0]{}\, 0\,.
\end{equation}
\end{prop}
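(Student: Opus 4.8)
The plan is to reduce the adjoint equation~\eqref{adjeq} to a scalar parabolic equation on $\R^3$ driven by a \emph{divergence-free} field, and then to feed this into the De Giorgi--Nash--Moser type estimates for singular divergence-free drifts established in~\cite{SSSZ}. First I would lift $\phi$ to the axisymmetric scalar $\tilde\phi(x)=\phi(r,z)$ on $\R^3$. Since $\nu\Delta\phi=\nu\Delta_x\tilde\phi$ for axisymmetric functions and $-\nu\frac2r\partial_r\phi=-\frac{2\nu}{r}e_r\cdot\nabla_x\tilde\phi$, equation~\eqref{adjeq} becomes
\[
  \partial_t\tilde\phi + \hat b\cdot\nabla_x\tilde\phi + \nu\Delta_x\tilde\phi \,=\, 0\,,
  \qquad \hat b \,=\, u - \frac{2\nu}{r}\,e_r\,,
\]
where $\hat b$ is divergence-free: $\div u=0$ and $\div\bigl(\frac{2\nu}{r}e_r\bigr)=\partial_r\frac{2\nu}{r}+\frac1r\frac{2\nu}{r}=0$. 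Two features make this reformulation convenient. There is no zeroth order term, so the maximum principle gives $\|\tilde\phi(\cdot,t)\|_{L^\infty}\le\|\phi_1\|_{L^\infty}$ for $0<t\le t_1$, and after the time reversal $w(\cdot,s)=\tilde\phi(\cdot,t_1-s)$ the equation takes the standard form $\partial_s w+\tilde b\cdot\nabla_x w-\nu\Delta_x w=0$ with $\tilde b$ divergence-free. Moreover the symmetry axis $r=0$ becomes the interior line $\{x_1=x_2=0\}$ of $\R^3$, so continuity \emph{across} the axis will follow from interior regularity rather than from a delicate boundary analysis.

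The heart of the matter is an oscillation estimate for $\tilde\phi$ that is uniform down to $t=0$. I would split the drift into its two divergence-free pieces. The velocity $u$ is smooth but critically singular as $t\to0$: estimate~\eqref{u-apriori} gives $\|u(t)\|_{L^\infty(\R^3)}\le C\sqrt{\nu/t}$, a bound that is invariant under the parabolic rescaling $x\mapsto x_0+\lambda x$, $t\mapsto\lambda^2 t$, so that $u$ belongs to $L^\infty_t\BMO^{-1}(\R^3)$ with a norm controlled by $M$ and independent of the scale. This is exactly the regime lying beyond classical Aronson--Serrin theory --- note that $u\in L^p_tL^\infty_x$ only for $p<2$ --- but accessible to the divergence-free theory of~\cite{SSSZ}. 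The second piece $\frac{2\nu}{r}e_r=\nabla_x(2\nu\log r)$ is time independent, singular only on the axis, divergence-free, and lies in the same critical class because $\log r\in\BMO(\R^3)$. Applying~\cite{SSSZ} on parabolic cylinders then yields a H\"older estimate for $\tilde\phi$ whose exponent and constant depend only on the critical norm of $\hat b$; by scale invariance this estimate survives as the cylinders approach the final time, producing a uniform parabolic modulus of continuity for $\tilde\phi$ on $\R^3\times(0,t_1]$, valid in particular near the vortex core $(\bar r,\bar z)$ and across the axis.

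Granting this uniform modulus of continuity, the stated conclusions follow. A function that is uniformly (parabolically) continuous on $\bar\Omega\times(0,t_1]$ extends continuously to $\bar\Omega\times[0,t_1]$, which is the desired extension of $\phi$, and the uniform continuity in the time variable at $t=0$ is precisely~\eqref{unifconvphi}. To obtain $\phi(\cdot,t)\in C_0(\Omega)$ I would combine $\phi_1\in C_0(\Omega)$ with a barrier (comparison) argument: since $\hat b$ is bounded away from the core and the axis, a barrier comparison gives decay of $\phi$ at infinity that is uniform in $t$. Finally, because $\phi_1$ vanishes on $\partial\Omega$, the Dirichlet condition $\phi(0,z,t)=0$ holds for every $t\in(0,t_1]$, as recalled before the statement, and letting $t\to0$ gives $\phi(0,z,0)=0$ for all $z$, in particular $\phi(0,0,0)=0$.

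The step I expect to be the main obstacle is the verification that $\hat b$ genuinely meets the hypotheses of~\cite{SSSZ} uniformly up to $t=0$: one must check that the critical, scale-invariant norm of $u$ is controlled purely by $M=\frac1\nu\lim_{t\to0}\|\omega_\theta(t)\|_{L^1(\Omega)}$, and that the coordinate singularity of $\frac{2\nu}{r}e_r$ on the axis --- an artifact of the cylindrical reduction that disappears in the genuinely three-dimensional one-form picture $\phi=2\pi r\Phi_\theta$, where the terminal data extend to a smooth field --- does not spoil the continuity estimate. Once the drift is placed in the correct divergence-free critical class, continuity up to the initial time and across the axis is exactly the output of~\cite{SSSZ}; it is the divergence-free structure that rescues the argument in the borderline range where the classical parabolic theory breaks down.
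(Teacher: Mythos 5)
Your reduction collapses at exactly the point the paper identifies as the main difficulty: the axis. The field $-\frac{2\nu}{r}e_r = -\nabla(2\nu\log r)$ is divergence-free only for $r>0$; as a distribution on $\R^3$ its divergence is $-4\pi\nu\,\cH^1\lfloor_{\{x_1=x_2=0\}}$, a nonzero singular measure on the axis (your computation $\partial_r\frac{2\nu}{r}+\frac1r\frac{2\nu}{r}=0$ is the pointwise cylindrical formula, valid off the axis only). Hence the hypotheses of \cite[Theorem~1.1]{SSSZ} fail on every parabolic cylinder meeting the axis, and — decisively — the conclusion you want to import is \emph{false} there: $\phi(r,z)=r^2$ is a nonnegative steady solution of \eqref{adjeq} (with $u=0$), since $\partial_r^2(r^2)+\frac1r\partial_r(r^2)-\frac2r\partial_r(r^2)=2+2-4=0$, and it vanishes exactly on the axis without vanishing nearby. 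So no interior Harnack inequality can hold across $r=0$, and the uniform oscillation estimate ``with constant depending only on the critical norm of $\hat b$'' cannot be obtained this way; solutions are forced to vanish like $r^2$ at the axis, which is incompatible with treating $r=0$ as an interior point of a uniformly nondegenerate problem. This is why the paper applies \cite{SSSZ} only on parabolic balls $B_{x,\rho}\times(0,\rho^2)$ compactly contained in $\cO\times(0,t_1)$, where $\frac1r e_r$ is smooth and divergence-free, and why continuity at axis points at $t=0$ is established by an entirely different mechanism: a parabolic blow-up around a putative defect point, compactness from the scale-invariant bounds \eqref{deru-apriori}, and a contradiction with the strong maximum principle for the limiting equation (with the Harnack inequality of \cite{SSSZ}, used once more away from the axis, to rule out the boundary case $\bar s=0$). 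Your proposal contains no substitute for this step.

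A secondary but real gap is the drift hypothesis for $u$ itself: the scale-invariant bound \eqref{u-apriori}, $\sup_t (t/\nu)^{1/2}\|u(t)\|_{L^\infty}<\infty$, does \emph{not} imply $u\in L^\infty_t\BMO^{-1}_x$ — these are distinct scale-invariant conditions, and $L^\infty_x$ with a $t^{-1/2}$ blow-up gives no uniform spatial $\BMO^{-1}$ control. The paper has to prove the (stronger) bound $\|u(t)\|_{(L^\infty)^{-1}(\R^3)}\le c\|\omega_\theta(t)\|_{L^1(\Omega)}\le cM\nu$ (Lemma~\ref{lemma-bmo-bound}), via an explicit potential-theoretic construction for the circular filament; this is where the dependence on $M$ alone comes from, and it is genuine work, not a verification. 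You did flag both issues as ``the main obstacle,'' which is honest, but the first is not an obstacle that can be cleared within your framework — the axis estimate requires replacing the argument, not checking hypotheses. Your remaining steps (continuity off the axis and up to $t=0$ via \cite{SSSZ}, the $C_0$ property at infinity via barriers or finite drift displacement, and the extension from a uniform modulus) are consistent with the paper's proof.
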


\begin{proof}
We can assume that $\nu=1$ without loss of generality. As we have seen
above, the standard parabolic theory applied to the form~\eqref{v2}
of~\eqref{adjeq}, together with estimates~\eqref{deru-apriori} for $u$,
give the smoothness of $\phi$ for $t\in(0,t_1)$. The only issue is the
possible deterioration of the estimates as $t \to 0$. We will use
optimal regularity theory for linear parabolic equations with rough
coefficients to overcome the difficulty.

Consider the linear equation
\begin{equation}\label{p1}
  h_t + b(x,t)\cdot\nabla h + \Delta h \,=\, 0\,,
\end{equation}
in $Q = B\times(0,1)$, where $B\subset\R^n$ is a unit ball and $b$ is
a drift term. Assume that
\begin{equation}\label{p2}
  |\partial_t^k\nabla_x^\ell \,b| \,\le\, C_{k,\ell} \,t^{-k-\frac{\ell}{2}-\frac12}
  \quad \hbox{in }Q\,, \quad \hbox{for }k,\ell = 0,1,2,\dots
\end{equation}
This is a critical case for the regularity theory: if we could
increase the exponent on the right-hand side by any positive number,
no matter how small, the classical linear theory would imply that any
bounded solution $h$ is uniformly H\"older continuous in
$Q_r=B_r\times (0,r^2)$ for any $r<1$ (with estimates depending on
$r$). On the other hand, without additional assumptions the
condition~\eqref{p2} by itself may not be enough to arrive at that
conclusion.

Luckily, the velocity field $u$ in~\eqref{adjeq} has additional
properties. First, it is divergence-free. Second, it is bounded in the
space $L^\infty_t \BMO^{-1}_x$. It turns out that these two properties
are sufficient to ensure the H\"older-continuity estimates we
need. This is one of the main results in~\cite{SSSZ}, see also~\cite{FV}.
Strictly speaking, the claim in~\cite[Theorem~1.1]{SSSZ} is the
parabolic Harnack inequality, but it is well-known that H\"older
continuity is one of the easy consequences of the Harnack inequality.
In fact, in the present situation, one can even prove that $u$ is 
bounded in the space $L^\infty_t (L^\infty_x)^{-1}$, namely that $u = \div
\Psi$ for some matrix-valued function $\Psi$ that is bounded
in space and time. More precisely, we have the following result, 
whose proof is postponed to Section~\ref{sec52}.

\begin{lem}\label{lemma-bmo-bound}
Let $\omega = \omega_\theta e_\theta$ with $\omega_\theta\in L^1(\Omega)$, 
and let $u$ be the velocity field obtained from $\omega$ via the 
three-dimensional Biot-Savart law. Then there exists $c > 0$ such that
\begin{equation}\label{bmo-bound}
  \|u\|_{(L^\infty)^{-1}(\R^3)} \,\le\, c\|\omega_\theta\|_{L^1(\Omega)}\,.
\end{equation}
\end{lem}

Estimate \eqref{bmo-bound} is more than we need if we use the sharp
results of \cite{SSSZ}, but it has its own interest and it shows that
the more classical results of Osada \cite{Os} also apply to our
situation.

In what follows we denote
\begin{equation}\label{v6}
  \cO \,=\, \bigl\{(x_1,x_2,x_3) \in \R^3 \,\big|\, x_1^2 + x_2^2 > 0
  \bigr\}\,,
\end{equation}
namely $\cO$ is obtained from $\R^3$ be removing the symmetry axis
$x_1 = x_2 = 0$. We observe that the vector field $\frac1r e_r$ is
divergence-free and smooth in $\cO$. Together with~\eqref{bmo-bound}
this implies that in any parabolic ball $B_{x,\rho}\times
(0,\rho^2)\subset \cO\times(0,t_1)$ with $\rho < (x_1^2+x_2^2)^{1/2}$ the
adjoint equation \eqref{adjeq} is of the form~\eqref{p1} with $b\in L^\infty_t
\BMO^{-1}(B_{x,\rho})$ and $\div b=0$, so that Theorem 1.1 in~\cite{SSSZ}
can be applied.%
\footnote{Theorem 1.1 in~\cite{SSSZ} is purely local, even
though in the introduction of~\cite{SSSZ} a global condition $b\in
L^\infty_t\BMO^{-1}_x$ is mentioned. In the proof one only needs the
local condition. Also, when we are interested in the solution only
in $B_{x,\rho}\times (0,\rho^2)$, we can change the field $\frac 1r
e_r$ outside $B_{x,\rho}$ to a smooth div-free vector field in
$\R^3$, so that the global condition will in fact be satisfied (even
though it is not needed).} This remark will be used freely in the
proof below. Here and what follows, we consider $\phi$ and $\phi_1$
as functions on $\R^3 \times (0,t_1)$ and $\R^3$, respectively.

From the above considerations we see that our solution $\phi$ satisfies
the maximum principle:
\begin{equation}\label{max-princ}
  |\phi(x,t)| \,\le\, \max_{y \in \R^3}|\phi_1(y)|\,, \qquad x\in \R^3\,,
  \quad t\in(0,t_1]\,,
\end{equation}
and can be extended to a continuous function on $(\cO\times[0,t_1]) 
\cup (\R^3\times(0,t_1])$. The main point now is to prove its continuity 
at any point $(x,0)$ with $x \in \R^3\setminus\cO$. For any 
sufficiently small $\rho>0$, we define
\[
  A(\rho) \,=\, \sup \bigl\{\phi(x,t)\,\big|\, x \in \cC_\rho\,,~
  0< t < \rho^2\}\,,
\]
where $\cC_{\rho} = \bigl\{x\in\R^3\,\big|\, x_1^2+x_2^2\le\rho^2\}$
is the cylinder of radius $\rho$ centered on the $x_3$--axis. Clearly
$A$ is an increasing function, so that we can consider the
limit
\[
 a \,=\, \lim_{\rho\to 0} A(\rho) \,\ge\, 0\,.
\]
We need to show that $a=0$. (Once we have this, we can repeat the same
argument for $-\phi$, and we conclude that $\phi$ can be extended to a
continuous function on $\R^3 \times [0,t_1]$ satisfying $\phi(0,0) =
0$.) As in many other critical problems, it is natural to argue by
contradiction using the scaling invariance, see for example \cite{KNSS}
for a situation where related issues arise in the context of the
Navier-Stokes equations. 

Assume thus that $a>0$ and choose a sequence of points $(x^{(m)}, t^{(m)})$
such that $x^{(m)}$ approaches the $x_3$--axis, $t^{(m)}\searrow 0$, and
\[
  \lim_{m\to\infty}\phi(x^{(m)},t^{(m)}) \,=\, a\,.
\]
For $m \in \N$ we denote
\[
 \lambda_m \,=\, \sqrt{\bigl(x_1^{(m)}\bigr)^2+\bigl(x_2^{(m)}\bigr)^2+t^{(m)}}\,,
\]
and we define, for $y \in \R^3$ and $0 < s < \lambda_m^{-2}t_1$,
\begin{align}\label{ukdef}
 u^{(m)}(y,s) \,&=\,\lambda_m u(\lambda_m y_1,\lambda_m y_2,\lambda_m y_3+x_3^{(m)},
 \lambda_m^2 s)\,, \\ \label{phikdef}
 \phi^{(m)}(y,s) \,&=\,\,\,\, \,\,\,\phi(\lambda_m y_1,\lambda_m y_2,\lambda_m
  y_3+x_3^{(m)},\lambda_m^2 s)\,.
\end{align}
Setting
\[
  y^{(m)} \,=\, \bigl(\lambda_m^{-1}x_1^{(m)},\lambda_m^{-1}x_2^{(m)},0\bigr)\,,
  \qquad s^{(m)} \,=\, \lambda_m^{-2}t^{(m)}\,,
\]
we have $|y^{(m)}|^2+s^{(m)} = 1$ for all $m$, and we can therefore assume (after
extracting a subsequence, if necessary) that
\[
  (y^{(m)},s^{(m)}) \,\xrightarrow[m \to \infty]{}\,(\bar y,\bar s)\,, \qquad
  \hbox{where}\quad |\bar y|^2+\bar s \,=\, 1\,.
\]

Note that the operator $D := \frac2r \partial_r$ in \eqref{adjeq} has the same
scaling as the Laplacian, and is also invariant under translations along the
$x_3$--axis. Using this observation, it is straightforward to verify that
the functions $u^{(m)}, \phi^{(m)}$ defined in \eqref{ukdef}, \eqref{phikdef}
satisfy the equation
\begin{equation}\label{adjeqk}
  \partial_s \phi^{(m)} + u^{(m)}\cdot\nabla_y\phi^{(m)} + (\Delta_y -D_y)\phi^{(m)}
  \,=\, 0\,,
\end{equation}
in the region $\R^3 \times (0,\lambda_m^{-2}t_1)$. Moreover, in view of
\eqref{deru-apriori} and~\eqref{ukdef}, we have the a priori estimates
\begin{equation}\label{ukbounds}
  \|\partial_s^k \nabla_y^\ell \,u^{(m)}(s)\|_{L^\infty(\R^3)} \,\le\,
  C_{k\ell}(M)M \,s^{-k -\ell/2 -1/2}\,, \qquad
  0 < s < \lambda_m^{-2}t_1\,,
\end{equation}
which are similar to \eqref{p2} and hold uniformly in $m$. Finally,
uniform bounds on $\phi^{(m)}$ and its derivatives are easily obtained
by applying standard linear parabolic theory to Eq.~\eqref{adjeqk},
taking into account \eqref{max-princ} and \eqref{ukbounds}. Thus we
can assume (again after choosing subsequences, if necessary) that
\begin{equation}\label{v15}
  u^{(m)} \,\xrightarrow[m \to \infty]{}\, \bar u\,\,,\qquad
  \phi^{(m)} \,\xrightarrow[m \to \infty]{}\, \bar \phi\,,
\end{equation}
for suitable functions $\bar u, \bar \phi$, where the convergence is
uniform, with all derivatives, on compact subsets of
$\R^3\times(0,\infty)$. (Note that $u^{(m)},\phi^{(m)}$ are
well defined on any such set once $m$ is sufficiently large.)  By
construction, the functions $\bar u, \bar \phi$ satisfy $\partial_s
\bar \phi + \bar u\cdot\nabla \bar\phi + (\Delta -D)\bar \phi = 0$ in
$\R^3\times(0,\infty)$.  Due to~\eqref{bmo-bound} and scale-invariance
of the relevant norms we also have the uniform bound
\[
  \|u^{(m)}\|_{L^\infty_t\BMO^{-1}_x} \,\le\, c\|\omega_\theta\|_{L^\infty(0,T;L^1(\Omega))}\,,
\]
which means, as we have seen above, that the functions $\phi^{(m)}$
are in fact uniformly continuous up to $t=0$ on any set $B\times
[0,t_2]$ as long as $\bar B\subset \cO$. Hence $\bar\phi$ is
continuous on $(\cO\times[0,t_1]) \cup (\R^3\times(0,t_1])$, and it is
clear from the definitions that $\bar\phi \le a$ in that domain.  
At the same time, we know that
\[
  \bar\phi(\bar y,\bar s)  \,=\, \lim_{m\to\infty}\phi^{(m)}(y^{(m)},s^{(m)})
  \,=\, \lim_{m\to\infty}\phi(x^{(m)},t^{(m)}) \,=\, a\,.
\]
Finally, we have $\bar\phi(y,s) = \lim_{m\to \infty}\phi^{(m)}(y,s) = 0$ when 
$y \in \R^3\setminus\cO$ and $s > 0$. 

Since we assumed that $a>0$, these observations immediately lead
to a contradiction with the strong maximum principle when $\bar s>0$.
It thus remains to deal with the case where $\bar s = 0$ and $|\bar y|=1$.
In that situation, the Harnack inequality from~\cite[Theorem~1.1]{SSSZ}
applied to the parabolic ball $Q = B_{\bar y,1/2}\times [0,1/4)$ shows that
$\bar\phi = a$ in a neighborhood of $(\bar y,0)$ in $Q$, and we again
obtain a contradiction with the strong maximum principle, as in the
case $\bar s>0$. This concludes the proof of the assertion that
$\phi(x,t)$ extends to a continuous function on $\R^3 \times
[0,t_1]$ satisfying $\phi(0,0) = 0$.

To conclude the proof of Proposition~\ref{adj-prop}, it remains to
verify that $\phi(x,t)$ vanishes as $|x| \to \infty$ uniformly for all
$t \in (0,t_1]$, which implies in particular \eqref{unifconvphi} in
view of the previous results. Since $\phi_1 \in C_0(\Omega)$, this
property is intuitively obvious because the drift term in
Eq.~\eqref{adjeq} satisfies $\int_0^{t_1} \|u(\cdot,t)\|_{L^\infty}\dd
t < \infty$, and therefore can move ``diffusion particles'' over
finite distances only, during the time interval $(0,t_1)$.  This
heuristic argument can easily be made rigorous if one proceeds as in
\cite[Proposition~6.1]{GS}, see also Proposition~\ref{L1prop-far}
below.  Alternatively, it is possible to reach the same conclusion
using the parabolic Harnack inequality and the conservation of the
mass $\int_{\R^3} \phi(x,t)\dd x$, which can be checked by a direct
calculation. We leave the details to the reader.
\end{proof}

In the rest of this section, we derive a few important consequences
of Proposition~\ref{adj-prop}. By construction, if $\phi$ is as in
the statement, we have
\[
  \int_\Omega \omega_\theta(r,z,t) \phi(r,z,t) \dd r \dd z \,=\,
  \int_\Omega \omega_\theta(r,z,t_0) \phi(r,z,t_0) \dd r \dd z\,,
  \qquad 0 < t_0 \le t \le t_1\,.
\]
To take the limit $t_0 \to 0$, we decompose the right-hand side
as
\[
  \int_\Omega \omega_\theta(r,z,t_0) \bigl(\phi(r,z,t_0) -
  \phi(r,z,0)\bigr)\dd r \dd z + \int_\Omega \omega_\theta(r,z,t_0)
  \phi(r,z,0)\dd r\dd z\,,
\]
and we observe that the first term tends to zero in view of
\eqref{Mdef}, \eqref{unifconvphi} while the second one converges to
$\int_\Omega \phi(\cdot,\cdot,0)\dd\mu$ by
Proposition~\ref{trace-prop}. We thus have
\begin{equation}\label{adj-rel}
  \int_\Omega \omega_\theta(r,z,t) \phi(r,z,t) \dd r \dd z \,=\,
  \int_\Omega \phi(\cdot,\cdot,0)\dd\mu\,, \qquad 0 < t \le t_1\,.
\end{equation}

\begin{cor}\label{adj-cor1}
If $\omega_\theta \in C^0((0,T),L^1(\Omega) \cap L^\infty(\Omega))$ is
a mild solution of~\eqref{omeq} on $(0,T)$ satisfying~\eqref{Mdef},
then $\|\omega_\theta(t)\|_{L^1(\Omega)} \le \|\mu\|_{\tv}$ for all
$t \in (0,T)$. In particular, one has $M = \|\mu\|_{\tv}/\nu$.
\end{cor}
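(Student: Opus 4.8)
The plan is to exploit the conservation relation \eqref{adj-rel} together with a duality characterization of the $L^1$ norm. Recall that, exactly as in the definition of $\|\cdot\|_\tv$ recalled in the introduction, for any fixed $t \in (0,T)$ we have
\begin{equation}\label{L1dual}
  \|\omega_\theta(t)\|_{L^1(\Omega)} \,=\, \sup\left\{\int_\Omega
  \phi_1\,\omega_\theta(t)\dd r\dd z \,\Big|\, \phi_1 \in C_0(\Omega)\,,~
  \|\phi_1\|_{L^\infty(\Omega)} \le 1\right\}\,,
\end{equation}
since $\omega_\theta(t) \in L^1(\Omega)$. The idea is that each admissible test function $\phi_1$ can be taken as the terminal datum of the adjoint equation at time $t_1 = t$, so that \eqref{adj-rel} transfers the pairing against $\omega_\theta(t)$ to a pairing against the limiting measure $\mu$, which is controlled by $\|\mu\|_\tv$.

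First I would fix $t_1 \in (0,T)$ and an arbitrary $\phi_1 \in C_0(\Omega)$ with $\|\phi_1\|_{L^\infty(\Omega)} \le 1$. Letting $\phi$ be the solution of \eqref{adjeq} with terminal condition $\phi(\cdot,\cdot,t_1) = \phi_1$, relation \eqref{adj-rel} evaluated at $t = t_1$ reads
\[
  \int_\Omega \omega_\theta(r,z,t_1)\,\phi_1(r,z)\dd r\dd z \,=\,
  \int_\Omega \phi(\cdot,\cdot,0)\dd\mu\,.
\]
By Proposition~\ref{adj-prop} we have $\phi(\cdot,\cdot,0) \in C_0(\Omega)$, while the maximum principle \eqref{max-princ} gives $\|\phi(\cdot,\cdot,0)\|_{L^\infty(\Omega)} \le \|\phi_1\|_{L^\infty(\Omega)} \le 1$. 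Hence the right-hand side is bounded by $\|\mu\|_\tv$, by the very definition of the total variation norm. Taking the supremum over all admissible $\phi_1$ and using \eqref{L1dual}, we conclude that $\|\omega_\theta(t_1)\|_{L^1(\Omega)} \le \|\mu\|_\tv$; since $t_1 \in (0,T)$ was arbitrary, this is the first assertion.

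For the identity $M = \|\mu\|_\tv/\nu$ I would argue by two opposite inequalities. On the one hand, recalling from \cite[Lemma~5.1]{GS} that $t \mapsto \|\omega_\theta(t)\|_{L^1(\Omega)}$ is nonincreasing, the limit defining $M$ in \eqref{Mdef} is in fact a supremum, so $M\nu = \sup_{t}\|\omega_\theta(t)\|_{L^1(\Omega)} \le \|\mu\|_\tv$ by the bound just obtained. On the other hand, Proposition~\ref{trace-prop} yields $\omega_\theta(t)\dd r\dd z \weakto \mu$ as $t \to 0$, and the total variation norm is weak-star lower semicontinuous on $\cM(\Omega)$; since $\|\omega_\theta(t)\dd r\dd z\|_\tv = \|\omega_\theta(t)\|_{L^1(\Omega)}$, this gives $\|\mu\|_\tv \le \liminf_{t\to 0}\|\omega_\theta(t)\|_{L^1(\Omega)} = M\nu$. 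Combining the two inequalities yields $M\nu = \|\mu\|_\tv$, hence the claim.

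The real content has already been expended in Proposition~\ref{adj-prop}: the only step that is not purely formal is the assertion that the backward solution $\phi$ is genuinely continuous up to $t = 0$ and belongs to $C_0(\Omega)$ at the initial time, with the sharp $L^\infty$ bound transmitted from the terminal datum. Granting that, the present corollary is a soft duality argument, and I expect no further obstacle beyond routine bookkeeping.
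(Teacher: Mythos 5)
Your proposal is correct and follows essentially the same route as the paper: test against terminal data $\phi_1 \in C_0(\Omega)$ with $\|\phi_1\|_{L^\infty(\Omega)} \le 1$, solve the adjoint equation backwards via Proposition~\ref{adj-prop}, invoke the maximum principle and relation \eqref{adj-rel} to bound the pairing by $\|\mu\|_\tv$, and take the supremum. Your second step merely makes explicit what the paper leaves implicit (namely that the converse inequality $\|\mu\|_\tv \le M\nu$ follows from Proposition~\ref{trace-prop} by weak-star lower semicontinuity of the total variation norm, as recalled in the appendix), so there is nothing to add.
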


\begin{proof}
Fix $t_1 \in (0,T)$, and take $\phi_1 \in C_0(\Omega)$ such that
$\|\phi_1\|_{L^\infty(\Omega)} \le 1$. Let $\phi : \Omega \times [0,t_1]
\to \R$ be the solution of the adjoint equation~\eqref{adjeq}
with terminal condition $\phi(\cdot,\cdot,t_1) = \phi_1$
given by Proposition~\ref{adj-prop}. By the parabolic
maximum principle, we know that $|\phi(r,z,t)| \le 1$
for all $(r,z) \in \Omega$ and all $t \in [0,t_1]$. It thus
follows from~\eqref{adj-rel} with $t = t_1$ that
\[
  \Bigl|\int_\Omega \omega_\theta(r,z,t_1) \phi_1(r,z)\dd r\dd z
  \Bigr| \,=\, \Bigl|\int_\Omega \phi(\cdot,\cdot,0)\dd\mu\Bigr|
  \,\le\, \|\mu\|_\tv\,,
\]
and taking the supremum over all $\phi_1 \in C_0(\Omega)$
satisfying the bound $\|\phi_1\|_{L^\infty(\Omega)} \le 1$ we 
conclude that $\|\omega_\theta(t_1)\|_{L^1(\Omega)} \le \|\mu\|_\tv$. Thus
$M\nu = \lim_{t \to 0} \|\omega_\theta(t_1)\|_{L^1(\Omega)} \le
\|\mu\|_\tv$, and the converse inequality directly follows
from Proposition~\ref{trace-prop}.
\end{proof}

\begin{cor}\label{adj-cor2}
If the measure $\mu$ given by Proposition~\ref{trace-prop}
is positive, then the solution $\omega_\theta$ of
\eqref{omeq} satisfies $\omega_\theta(r,z,t) \ge 0$ for all
$(r,z) \in \Omega$ and all $t \in (0,T)$.
\end{cor}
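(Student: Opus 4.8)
Since $\omega_\theta(t)$ emanates from the singular measure $\mu$ as $t \to 0$, one cannot apply a maximum principle to the vorticity equation \eqref{omeq} directly. Instead the plan is to transfer positivity through the duality identity \eqref{adj-rel}. Fix $t_1 \in (0,T)$ and let $\phi_1 \in C_0(\Omega)$ be an arbitrary \emph{nonnegative} test function. Let $\phi$ be the corresponding solution of the adjoint equation \eqref{adjeq} with terminal condition $\phi(\cdot,\cdot,t_1) = \phi_1$, which by Proposition~\ref{adj-prop} extends to a continuous function on $\bar\Omega \times [0,t_1]$ with $\phi(\cdot,\cdot,t) \in C_0(\Omega)$ for all $t \in [0,t_1]$.

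First I would show that $\phi \ge 0$ on $\Omega \times [0,t_1]$. Reversing time via $\psi(r,z,s) = \phi(r,z,t_1-s)$, the function $\psi$ solves a forward parabolic equation whose spatial operator $\nu\bigl(\Delta - \tfrac2r\partial_r\bigr) + u\cdot\nabla$ contains no zeroth-order term; its initial data $\psi(\cdot,\cdot,0) = \phi_1$ are nonnegative, and its Dirichlet data on $\partial\Omega$ vanish because $\phi(\cdot,\cdot,t) \in C_0(\Omega)$. The parabolic maximum principle---already used in the proof of Corollary~\ref{adj-cor1} to bound $|\phi|$---then gives $\psi \ge 0$, hence $\phi \ge 0$, and in particular $\phi(\cdot,\cdot,0) \ge 0$. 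Evaluating \eqref{adj-rel} at $t = t_1$ and using the hypothesis $\mu \ge 0$, I obtain
\[
  \int_\Omega \omega_\theta(r,z,t_1)\,\phi_1(r,z)\dd r\dd z \,=\,
  \int_\Omega \phi(\cdot,\cdot,0)\dd\mu \,\ge\, 0\,.
\]
Since $\phi_1 \ge 0$ was arbitrary in $C_0(\Omega)$ and $\omega_\theta(\cdot,\cdot,t_1)$ is continuous on $\Omega$ by parabolic smoothing (see the proof of Lemma~\ref{apriori-lem}), this forces $\omega_\theta(r,z,t_1) \ge 0$ for every $(r,z) \in \Omega$; as $t_1 \in (0,T)$ was arbitrary, the claim follows.

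The one point that deserves care is the validity of the maximum principle up to the symmetry axis $r = 0$, where the coefficient $\tfrac2r\partial_r$ in \eqref{adjeq} is singular. This causes no difficulty here: Proposition~\ref{adj-prop} already provides continuity of $\phi$ up to $\{r=0\}$ together with the boundary condition $\phi = 0$ there, so that no negative interior minimum of $\psi$ can be created from the axis. Alternatively---and perhaps more transparently---one may pass through the equivalent three-dimensional form \eqref{v2} via the substitution \eqref{v4}, which is a genuinely nonsingular parabolic equation on $\R^3$ to which the standard maximum principle applies directly. This is the only step I expect to require comment; the rest is a direct consequence of the machinery already in place.
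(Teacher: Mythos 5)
Your proof is correct and is essentially the paper's own argument: both rest on the duality identity \eqref{adj-rel}, the nonnegativity $\phi \ge 0$ of the adjoint solution supplied by the parabolic maximum principle together with Proposition~\ref{adj-prop}, and the positivity of $\mu$ --- the paper merely phrases it as a contradiction using a nonnegative $\phi_1$ localized where $\omega_\theta(\cdot,\cdot,t_1)$ would be negative, while you test against all nonnegative $\phi_1 \in C_0(\Omega)$ and invoke continuity of $\omega_\theta(\cdot,\cdot,t_1)$, a purely cosmetic difference. One small caveat on your parenthetical alternative: \eqref{v2} is a \emph{system} whose Lie-derivative term contains the zeroth-order coupling $\Phi_j\partial_i u_j$, so the standard scalar maximum principle does not apply to it componentwise ``directly''; your main route through the scalar equation \eqref{adjeq}, using the continuity up to the axis and the vanishing of $\phi$ on $\partial\Omega$ from Proposition~\ref{adj-prop}, is the sound one and is exactly what the paper relies on.
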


\begin{proof}
Assume on the contrary that $\omega_\theta(r_1,z_1,t_1) < 0$
for some $(r_1,z_1) \in \Omega$ and some $t_1 \in (0,T)$.
Take a nonnegative function $\phi_1 \in C_0(\Omega)$ such that
$\phi_1(r_1,z_1,t_1) = 1$ and $\phi_1$ is supported in a small
neighborhood of $(r_1,z_1)$ where $\omega(\cdot,\cdot,t_1)$ takes
negative values only. If $\phi$ denotes the solution of the adjoint
equation~\eqref{adjeq} with terminal condition $\phi(\cdot,\cdot,
t_1) = \phi_1$, we obtain a contradiction from Eq.~\eqref{adj-rel}
with $t = t_1$ because the left-hand side is strictly negative
by construction, whereas the right-hand side is nonnegative
since $\phi \ge 0$ and $\mu$ is a positive measure.
\end{proof}

\begin{rem}\label{tight-rem}
An important consequence of Corollary~\ref{adj-cor1} is that
the family of signed measures $(\omega_\theta(\cdot,\cdot,t)\dd r
\dd z)_{t \in (0,T)}$ is tight, see Section~\ref{sec51} for more
details. In particular, this means that the convergence
\[
  \int_\Omega \phi(r,z) \omega_\theta(r,z,t) \dd r \dd z
   \,\xrightarrow[t \to 0]{}\, \int_\Omega \phi \dd\mu
 \]
holds for any bounded and continuous function $\phi$
on $\Omega$, and not just for any $\phi \in C_0(\Omega)$.
\end{rem}

%%%%%%%%%%%%%%%%%%%%%%%%%%%%%%%%%%%%%%%%%%%%%

\section{Gaussian estimates}\label{sec3}

As in the previous section, we assume that $\omega_\theta \in
C^0((0,T),L^1(\Omega) \cap L^\infty(\Omega))$ is a mild solution of
the axiymmetric vorticity equation~\eqref{omeq} on the time interval
$(0,T)$ satisfying~\eqref{Mdef}, and we denote by $\mu \in
\cM(\Omega)$ the initial measure defined by
Proposition~\ref{trace-prop}. Our goal here is to give accurate
estimates on the axisymmetric vorticity $\omega_\theta$ and the
associated velocity field $u = (u_r,u_z)$ under the additional
hypotheses that $\mu$ is a {\em positive} measure whose support is
{\em bounded away from the symmetry axis} $r = 0$ and {\em localized
  in the radial direction}. Of course, the application we have in mind
is the case where $\mu$ is a Dirac mass located at some point $(\bar
r,\bar z) \in \Omega$, which is the situation considered in
Theorem~\ref{main}.

\subsection{$L^1$ estimates near the symmetry axis}\label{sec31}

The goal of this section is to show that the $L^1$norm of the
axisymmetric vorticity $\omega_\theta$ is extremely small near the
symmetry axis for short times, if the initial measure $\mu$ is
positive and supported away from the axis. The precise statement is:

\begin{prop}\label{L1prop-near}
Assume that $\mu \in \cM(\Omega)$ is a positive measure whose support
is contained in the set $[2\rho,\infty) \times \R \subset \Omega$
for some $\rho > 0$. Then the solution
$\omega_\theta$ of~\eqref{omeq} satisfies
\begin{equation}\label{L1est-near}
  0 \,\le\, \int_0^{\rho}\left\{\int_\R \omega_\theta(r,z,t) \dd z\right\}
  \dd r \,\le\, C_9(M) \|\mu\|_\tv \,e^{-\frac{\rho^2}{16\nu t}}\,,
  \qquad t \in (0,T)\,,
\end{equation}
for some positive constant $C_9$ depending only on $M = \|\mu\|_\tv/\nu$.
\end{prop}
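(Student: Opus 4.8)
The plan is to bound the mass that the vorticity can transport into the strip near the axis by comparing $\omega_\theta$ against the evolution of a supersolution of the adjoint equation that is localized near the symmetry axis. The first inequality in~\eqref{L1est-near} is immediate: since $\mu$ is positive, Corollary~\ref{adj-cor2} gives $\omega_\theta(r,z,t) \ge 0$ for all $(r,z) \in \Omega$ and $t \in (0,T)$, so the integral is nonnegative. The content is the upper bound, and here I would exploit the duality relation~\eqref{adj-rel}. The quantity $\int_0^{\rho}\int_\R \omega_\theta(r,z,t)\dd z\dd r$ is $\int_\Omega \mathbf{1}_{\{r<\rho\}}\,\omega_\theta(r,z,t)\dd r\dd z$, and if I could take a terminal datum $\phi_1$ approximating $\mathbf{1}_{\{r<\rho\}}$ and run the adjoint equation~\eqref{adjeq} backwards, then~\eqref{adj-rel} would express this mass as $\int_\Omega \phi(\cdot,\cdot,0)\dd\mu$. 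Since $\supp\mu \subset [2\rho,\infty)\times\R$, this integral only sees the value of the adjoint solution at initial time on the far region $r \ge 2\rho$, so the whole game is to show that $\phi$, started from data concentrated in $\{r<\rho\}$ at time $t$, has become exponentially small on $\{r\ge 2\rho\}$ by time $0$.

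The key step is therefore a pointwise barrier estimate for the adjoint equation~\eqref{adjeq}. I would construct an explicit supersolution of Gaussian type, say of the form $\psi(r,z,s) = \exp\!\bigl(-\tfrac{(r-2\rho)^2}{c\,\nu(t-s)}\bigr)$ with $c$ chosen appropriately (the exponent $\rho^2/(16\nu t)$ in~\eqref{L1est-near} dictates $c$, allowing for the factor coming from the distance $\rho$ between the strips $\{r<\rho\}$ and $\{r\ge 2\rho\}$). The essential point is that the adjoint operator $\partial_s + u\cdot\nabla + \nu(\Delta - \tfrac2r\partial_r)$ has a drift $u$ that, by Corollary~\ref{apriori-cor}, satisfies $\|u(s)\|_{L^\infty(\Omega)} \le C_7(M)M\sqrt{\nu/s}$, hence is integrable in time: $\int_0^t \|u(s)\|_{L^\infty}\dd s \le 2C_7(M)M\sqrt{\nu t}$. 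This finite total drift is exactly what confines the ``diffusion particles'' to finite displacement and makes a Gaussian barrier work with the correct constant. The zeroth-order-free structure of~\eqref{adjeq} (there is no potential term, only the first-order operator $-\tfrac2r\partial_r$, which acts favorably near the axis) ensures the maximum principle applies cleanly, so that $0 \le \phi \le \psi$ wherever the barrier dominates the terminal data.

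The main obstacle I expect is controlling the drift contribution in the supersolution inequality: when I plug $\psi$ into the adjoint operator, the term $u\cdot\nabla\psi$ produces a factor of size $\|u(s)\|_{L^\infty}\cdot|\partial_r\psi|$, and $|\partial_r\psi|$ carries a factor $1/(\nu(t-s))$ that blows up as $s\to t$. Matching this against the favorable diffusive term $\nu\partial_r^2\psi$ requires the drift to be dominated by diffusion on the relevant scale, which is precisely where the $\sqrt{\nu/s}$ decay of $\|u(s)\|_{L^\infty}$ must be balanced against the Gaussian width; one must choose the constant $c$ and possibly a time-dependent width so that the integrated drift $\int\|u(s)\|_{L^\infty}\dd s \sim \sqrt{\nu t}$ only shifts the effective center by an amount $\ll \rho$, leaving a clean exponential $e^{-\rho^2/(16\nu t)}$. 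Once the barrier is established, I would conclude by applying~\eqref{adj-rel}, approximating $\mathbf{1}_{\{r<\rho\}}$ from below by admissible $\phi_1 \in C_0(\Omega)$, using $\phi(\cdot,\cdot,0) \le \psi(\cdot,\cdot,0) \le e^{-\rho^2/(16\nu t)}$ on $\supp\mu$, and bounding $\int_\Omega\phi(\cdot,\cdot,0)\dd\mu \le e^{-\rho^2/(16\nu t)}\|\mu\|_\tv$. The constant $C_9(M)$ then absorbs the dependence on $M$ entering through $C_7(M)$ in the total-drift estimate.
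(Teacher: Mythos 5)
Your strategy is viable and genuinely different from the paper's. The paper argues on the primal side: it introduces the cumulative mass $f(R,t)=\int_R^\infty\!\int_\R\omega_\theta\dd z\dd r$ of \eqref{fdef}, derives the differential inequality \eqref{evolf2}, eliminates the drifts by the moving frame \eqref{adef}, and compares with a Neumann heat problem on the half-line to get the lower bound \eqref{flowerbound} on $f(\rho,t_1)$, the mass near the axis being $M-f(\rho,t_1)$. You argue on the dual side through \eqref{adj-rel}, which is legitimate: nonnegativity comes from Corollary~\ref{adj-cor2}, the indicator $\mathbf{1}_{\{r<\rho\}}$ can be approximated monotonically from below by admissible $\phi_1\in C_0(\Omega)$, and the continuity of $\phi$ up to $s=0$ needed to evaluate $\int\phi(\cdot,\cdot,0)\dd\mu$ is supplied by Proposition~\ref{adj-prop}. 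The analytic core is the same in both proofs — a one-dimensional heat comparison in which the integrated drift $\int_0^t\|u(s)\|_{L^\infty}\dd s\le 2C_7(M)M\sqrt{\nu t}$ shifts a barrier by an amount $\ll\rho$ — and the constant $16$ in the exponent comes out identically. Your version yields a pointwise bound on the adjoint solution (more than is needed), at the cost of working with a two-dimensional equation; since the barrier depends on $r$ only, the $z$-variable in fact plays no role.

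Two concrete defects in your sketch must be repaired. First, the barrier $\psi=\exp\bigl(-(r-2\rho)^2/(c\,\nu(t-s))\bigr)$ is mis-centered and uncapped: as $s\to t$ it tends to $0$ on $\{r<\rho\}$, so it fails to dominate the terminal data there; worse, at $s=0$ it equals $1$ at $r=2\rho\in\supp\mu$, so your claimed bound $\psi(\cdot,\cdot,0)\le e^{-\rho^2/(16\nu t)}$ on $\supp\mu$ is false for this $\psi$. The correct barrier is centered at the \emph{edge of the data}, e.g.\ $\psi=\min\bigl\{1,\exp\bigl(-(r-c(s))^2/(4\nu(t-s))\bigr)\bigr\}$ with $c(t)=\rho$ and $c(s)$ moving outward backward in time; equivalently, compare only on the moving region $\{r>c(s)\}$, using $|\phi|\le 1$ from \eqref{max-princ} on the lateral boundary, which avoids the kink. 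Second, your claim that $-\frac2r\partial_r$ ``acts favorably near the axis'' has the sign backwards: combined with the $\frac1r\partial_r$ in the axisymmetric Laplacian, the backward-in-time equation carries a net \emph{outward} drift $\frac{\nu}{r}\,e_r$, i.e.\ toward $\supp\mu$. It is harmless only because the barrier is constant for $r\le c(s)$, so this drift is bounded by $\nu/\rho$ where the barrier varies and is absorbed by an extra shift $\nu t/\rho\le\rho/4$ when $4\nu t\le\rho^2$ — exactly the role of the $(t_1{-}t)/\rho$ term in the paper's \eqref{adef}. Taking $c'(s)=-\|u(s)\|_{L^\infty}-\nu/\rho$ and the smallness conditions \eqref{t1small}, the center at time $0$ stays below $3\rho/2$, so $\psi(\cdot,\cdot,0)\le e^{-\rho^2/(16\nu t)}$ on $\{r\ge2\rho\}$, giving \eqref{L1est-near} with the stated exponent; times violating \eqref{t1small} are absorbed into $C_9(M)$, as in the paper.
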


\begin{proof}
Without loss of generality we suppose that $\nu = 1$. Since $\mu$
is a positive measure, Corollary~\ref{adj-cor2} asserts that the solution
of~\eqref{omeq} satisfies $\omega_\theta(r,z,t) \ge 0$ for all
$(r,z) \in \Omega$ and all $t \in (0,T)$. As in \cite[Section~6.1]{GS},
we define
\begin{equation}\label{fdef}
  f(R,t) \,=\, \int_R^\infty \left\{\int_\R \omega_\theta(r,z,t)
  \dd z \right\}\dd r\,, \qquad R > 0\,,\quad t \in (0,T)\,.
\end{equation}
Then $f(R,t)$ is a nonincreasing function of $R$ which converges
to $\|\omega_\theta(t)\|_{L^1(\Omega)}$ as $R \to 0$ and
to zero as $R \to \infty$. Moreover $f$ satisfies the evolution
equation
\begin{equation}\label{evolf}
  \partial_t f(R,t) \,=\, \partial_R^2 f(R,t) + \frac1R\,
  \partial_R f(R,t) + \int_\R u_r(R,z,t) \omega_\theta(R,z,t)\dd z\,,
\end{equation}
which follows easily from~\eqref{omeq2}. Our goal is to obtain
a lower bound on $f(\rho,t)$ under the assumption that the
initial measure $\mu$ is supported in the set $[2\rho,\infty) \times
\R$. In view of Remark~\ref{tight-rem}, this hypothesis already implies
that $f(R,t) \to M = \|\mu\|_\tv$ as $t \to 0$ for any $R < 2\rho$.

Using the bound $\|u_r(t)\|_{L^\infty(\Omega)} \le C_7(M)M
t^{-1/2}$, which comes from Corollary~\ref{apriori-cor}, and
observing that $\partial_R f(R,t) = -\int_\R \omega_\theta(r,z,t)
\dd z \le 0$, we deduce from~\eqref{evolf} that
\begin{equation}\label{evolf2}
  \partial_t f(R,t) \,\ge\, \partial_R^2 f(R,t) + \frac1R\,
  \partial_R f(R,t) + C_7(M)\frac{M}{\sqrt{t}}\,\partial_R f(R,t)\,.
\end{equation}
To eliminate the drift terms in~\eqref{evolf2}, we fix $t_1 \in (0,T)$
and we define $g(y,t) = f(y+a(t),t)$ for $y \ge 0$ and $t \in (0,t_1]$,
where
\begin{equation}\label{adef}
   a(t) \,=\, \rho + \frac{t_1{-}t}{\rho} + 2C_7(M)M
  \bigl(\sqrt{t_1}-\sqrt{t}\bigr)\,,
   \qquad t \in [0,t_1]\,.
\end{equation}
Note that $a(t) \ge \rho$ for $t \in [0,t_1]$ and $a(t_1) = \rho$.
Using~\eqref{evolf2} and~\eqref{adef}, it is easy to verify that
\[
   \partial_t g(y,t) \,\ge\, \partial_y^2 g(y,t)\,, \qquad
   y \ge 0\,, \quad t \in (0,t_1]\,,
\]
and we obviously have $\partial_y g(0,t) = \partial_R f(a(t),t)
\le 0$ for $t \in (0,t_1]$. This implies that the function $g$ is
a {\em supersolution} of the heat equation on the half-line
$[0,\infty)$ with Neumann boundary condition at the origin.
More precisely, given any $t_0 \in (0,t_1)$, the parabolic
maximum principle implies that $g(y,t) \ge h(y,t)$ for all
$y \ge 0$ and all $t \in [t_0,t_1]$, where $h$ is defined by
\begin{equation}\label{hdef}
   \left\{\begin{array}{l}
   \hspace{0.3mm}\partial_t h(y,t)  \,=\, \partial_y^2 h(y,t)\,, \\
   \partial_y h(0,t) \,=\, 0\,, \\
   \hspace{2mm}h(y,t_0) \,=\, g(y,t_0) \,\equiv\, f(y+a(t_0),t_0)\,,
  \end{array}\right.
  \quad \begin{array}{l}
   y \ge 0\,, ~t \ge t_0\,, \\
   t \ge t_0\,, \\
   y \ge 0\,.
   \end{array}
\end{equation}
Solutions of~\eqref{hdef} are easily computed by symmetrizing
the initial data and solving the heat equation on the whole real
line. In particular, this gives the desired lower bound on
the quantity $f(\rho,t_1) = g(0,t_1)$.

To be more explicit, we first assume that the observation
time $t_1$ is small enough so that
\begin{equation}\label{t1small}
  4 t_1 \,\le\, \rho^2\,, \qquad \hbox{and}
  \qquad 8C_7(M)M \sqrt{t_1} \,\le\, \rho\,.
\end{equation}
In view of~\eqref{adef} we then have $a(t_0) \le a(0) \le 3\rho/2$
for any $t_0 \in (0,t_1)$, and this in turn implies that
$h(y,t_0) = f(y+a(t_0),t_0) \ge f(y+3\rho/2,t_0)$ for all
$y \ge 0$. Using the representation formula
\[
  h(0,t_1) \,=\, \frac{1}{\sqrt{\pi(t_1{-}t_0)}} \int_0^\infty
  e^{-\frac{y^2}{4t_1}}\,h(y,t_0)\dd y \,,
\]
and recalling that $f(y+3\rho/2,t_0) \to M$ as $t_0 \to 0$
for all $y < \rho/2$, we deduce that
\begin{equation}\label{flowerbound}
  f(\rho,t_1) \,\ge\, h(0,t_1) \,\ge\, \frac{M}{\sqrt{\pi t_1}}
  \int_0^{\rho/2}  e^{-\frac{y^2}{4t_1}} \dd y \,\ge\,
  M\Bigl(1 - e^{-\frac{\rho^2}{16t_1}}\Bigr)\,.
\end{equation}
In the last inequality we used the elementary bound
\begin{equation}\label{erfbound}
 \mathrm{erfc}(x) \,=\, \frac{2}{\sqrt{\pi}}
 \int_x^\infty e^{-y^2}\dd y \,\le\, e^{-x^2}\,, \qquad x \ge 0\,.
\end{equation}
Since $\|\omega_\theta(t_1)\|_{L^1(\Omega)} \le M = \|\mu\|_\tv$, we
conclude that
\[
  \int_0^{\rho}\left\{\int_\R \omega_\theta(r,z,t_1) \dd z\right\}
  \dd r \,\le\, M - f(\rho,t_1) \,\le\, M\,e^{-\frac{\rho^2}{16t_1}}\,,
\]
which gives the desired bound~\eqref{L1est-near} with $t = t_1$ and
$\nu = 1$, provided~\eqref{t1small} holds. If condition
\eqref{t1small} is not satisfied, one can take $C_9 = C_9(M) \ge
e^{\rho^2/(16 t_1)}$, in which case estimate~\eqref{L1est-near}
is obvious.
\end{proof}

\begin{cor}\label{nearaxis-cor}
Under the assumptions of Proposition~\ref{L1prop-near} we have
\begin{equation}\label{u/r-near}
  \Bigl\|\frac{u_r(t)}{r}\Bigr\|_{L^\infty(\Omega)} \,\le\,
  \frac{C_{10}(M)M}{t}\Bigl(\frac{\nu t}{\rho^2}\Bigr)^{1/3}\,,
  \qquad t \in (0,T)\,,
\end{equation}
where $C_{10}$ is a positive constant depending only on $M = \|\mu\|_\tv/\nu$.
\end{cor}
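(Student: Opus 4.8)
The plan is to combine the scale-invariant Biot--Savart estimate \eqref{BSest2} with the exponentially small near-axis mass supplied by Proposition~\ref{L1prop-near}. The point of departure is the observation that the unconditional bound of Corollary~\ref{apriori-cor}, which comes from feeding $\|\omega_\theta\|_{L^1(\Omega)}\le M\nu$ and $\|\omega_\theta/r\|_{L^\infty(\Omega)}\le C_4 M/(t\sqrt{\nu t})$ into \eqref{BSest2}, is wasteful: it uses the \emph{full} $L^1$ mass. The gain $(\nu t/\rho^2)^{1/3}$ should arise because $\omega_\theta$ is tiny near $r=0$, while away from the axis the factor $\|\omega_\theta/r\|_{L^\infty}$ can be traded for the cheaper $\rho^{-1}\|\omega_\theta\|_{L^\infty}$.

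Concretely, I would fix $t\in(0,T)$ and split the vorticity by its distance to the axis, $\omega_\theta = \omega_1 + \omega_2$ with $\omega_1 = \omega_\theta\,\1_{\{r<\rho\}}$ and $\omega_2 = \omega_\theta\,\1_{\{r\ge\rho\}}$. Since the Biot--Savart law is linear, the radial velocities add, $u_r = u_r^{(1)}+u_r^{(2)}$ with $u^{(i)} = \BS[\omega_i]$, and each $\omega_i$ still belongs to $L^1(\Omega)$ with $\omega_i/r\in L^\infty(\Omega)$, so \eqref{BSest2} applies to both pieces. For the far piece I would use $\|\omega_2/r\|_{L^\infty(\Omega)}\le\rho^{-1}\|\omega_\theta\|_{L^\infty(\Omega)}\le \rho^{-1}C_5(M)M/t$ (the second bound in \eqref{om-apriori}) together with $\|\omega_2\|_{L^1(\Omega)}\le M\nu$, obtaining
\[
  \Bigl\|\frac{u_r^{(2)}}{r}\Bigr\|_{L^\infty(\Omega)} \,\le\,
  C_3\,(M\nu)^{1/3}\Bigl(\frac{C_5(M)M}{\rho\,t}\Bigr)^{2/3}
  \,=\, \frac{C(M)M}{t}\Bigl(\frac{\nu t}{\rho^2}\Bigr)^{1/3}\,,
\]
where a direct check of the powers of $\nu,t,\rho$ confirms that this is exactly the target right-hand side.

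For the near piece I would instead retain $\|\omega_1/r\|_{L^\infty(\Omega)}\le\|\omega_\theta/r\|_{L^\infty(\Omega)}\le C_4 M/(t\sqrt{\nu t})$ from \eqref{om-apriori}, and exploit Proposition~\ref{L1prop-near}. Since $\mu$ is positive we have $\omega_\theta\ge 0$ by Corollary~\ref{adj-cor2}, hence $\|\omega_1\|_{L^1(\Omega)} = \int_0^\rho\!\int_\R\omega_\theta\dd z\dd r\le C_9(M)M\nu\,e^{-\rho^2/(16\nu t)}$. Inserting this into \eqref{BSest2} and letting the powers of $\nu,t$ collapse to $M/t$ as before yields $\|u_r^{(1)}/r\|_{L^\infty(\Omega)}\le C(M)(M/t)\,e^{-\rho^2/(48\nu t)}$. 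Finally, writing $s=\rho^2/(\nu t)$, the elementary fact that $s^{1/3}e^{-s/48}$ is a bounded function of $s>0$ gives $e^{-\rho^2/(48\nu t)}\le c\,(\nu t/\rho^2)^{1/3}$, so the near-axis contribution obeys the same bound as the far one, uniformly in $t\in(0,T)$. Adding the two estimates and absorbing all constants into a single $C_{10}(M)$ proves \eqref{u/r-near}.

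I do not anticipate a genuine obstacle: all the analytic content is already packaged in Proposition~\ref{L1prop-near} and in the a priori bounds \eqref{om-apriori}. The only point requiring care is the bookkeeping that makes the powers of $\nu$, $t$ and $\rho$ combine correctly in each of the two pieces, the single conceptual idea being the decision to bound $\omega_2/r$ by $\rho^{-1}\|\omega_\theta\|_{L^\infty}$ rather than by $\|\omega_\theta/r\|_{L^\infty}$.
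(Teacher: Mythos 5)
Your proposal is correct and is essentially identical to the paper's own proof: the same splitting of $\omega_\theta$ at $r=\rho$, the same use of \eqref{BSest2} on each piece, with Proposition~\ref{L1prop-near} and the first inequality in \eqref{om-apriori} handling the near-axis part (yielding the factor $e^{-\rho^2/(48\nu t)}$, absorbed into $(\nu t/\rho^2)^{1/3}$) and the trade $\|\omega_\theta^+/r\|_{L^\infty(\Omega)}\le\rho^{-1}\|\omega_\theta\|_{L^\infty(\Omega)}$ via the second inequality in \eqref{om-apriori} handling the far part. The bookkeeping of the powers of $\nu$, $t$, $\rho$ checks out exactly as in the paper.
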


\begin{proof}
Fix $t \in (0,T)$. We decompose $\omega_\theta(r,z,t) =
\omega_\theta^-(r,z,t) + \omega_\theta^+(r,z,t)$, where
\[
  \omega_\theta^-(r,z,t) \,=\, \omega_\theta(r,z,t)\,\1_{\{r \le \rho\}}\,,
  \qquad \omega_\theta^+(r,z,t) \,=\, \omega_\theta(r,z,t)
  \,\1_{\{r > \rho\}}\,.
\]
By linearity of the axisymmetric Biot-Savart law, there is a
corresponding decomposition for the velocity field $u(r,z,t) =
u^-(r,z,t) + u^+(r,z,t)$, where $u^\pm$ is the velocity associated
with $\omega_\theta^\pm$, respectively. Using estimate~\eqref{BSest2},
Proposition~\ref{L1prop-near}, and the first inequality in
\eqref{om-apriori}, we find
\begin{align*}
  \Bigl\|\frac{u_r^-(t)}{r}\Bigr\|_{L^\infty(\Omega)} \,&\le\, C_3
  \|\omega_\theta^-(t)\|_{L^1(\Omega)}^{1/3} \,\|\omega_\theta^-(t)/r\|_{L^\infty(
  \Omega)}^{2/3} \\ \,&\le\, C_3\,C_4^{2/3}\,C_9(M)^{1/3}\,\frac{M}{t}
  \,e^{-\frac{\rho^2}{48\nu t}} \,\le\, \frac{C(M)M}{t}
  \,\Bigl(\frac{\nu t}{\rho^2}\Bigr)^{1/3}\,.
\end{align*}
Similarly, using the second inequality in~\eqref{om-apriori}, we
obtain
\begin{align*}
  \Bigl\|\frac{u_r^+(t)}{r}\Bigr\|_{L^\infty(\Omega)} \,&\le\, C_3
  \|\omega_\theta^+(t)\|_{L^1(\Omega)}^{1/3} \,\rho^{-2/3}\,\|\omega_\theta^+(t)
  \|_{L^\infty(\Omega)}^{2/3} \,\le\, C_3\,C_5(M)^{2/3}\,\frac{M}{t}
  \,\Bigl(\frac{\nu t}{\rho^2}\Bigr)^{1/3}\,.
\end{align*}
Combining both estimates we arrive at~\eqref{u/r-near}.
\end{proof}

\subsection{$L^1$ estimates away from the axis}\label{sec32}

We next consider the opposite case where the support of the
initial measure $\mu$ is bounded in the radial direction.
The analogue of Proposition~\ref{L1prop-near} is:

\begin{prop}\label{L1prop-far}
Assume that $\mu \in \cM(\Omega)$ is a positive measure whose support
is contained in the set $(0,2\rho] \times \R \subset \Omega$ for some
$\rho > 0$. Then the solution $\omega_\theta$ of~\eqref{omeq} satisfies
\begin{equation}\label{L1est-far}
  0 \,\le\, \int_{3\rho}^\infty\left\{\int_\R \omega_\theta(r,z,t) \dd z\right\}
  \dd r \,\le\, C_{11}(M) \|\mu\|_\tv \,e^{-\frac{\rho^2}{16\nu t}}\,,
  \qquad t \in (0,T)\,,
\end{equation}
for some positive constant $C_{11}$ depending only on $M = \|\mu\|_\tv/\nu$.
\end{prop}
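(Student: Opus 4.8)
The plan is to mirror the proof of Proposition~\ref{L1prop-near}, tracking the \emph{outer} tail mass, but with two changes dictated by the fact that we now need an \emph{upper} bound and that the mass is concentrated near the axis rather than away from it. As before I would set $\nu=1$ (the general case following by rescaling) and write $m(r,t)=\int_\R\omega_\theta(r,z,t)\dd z$. Since $\mu$ is positive, Corollary~\ref{adj-cor2} gives $\omega_\theta\ge 0$, so $f(R,t)=\int_R^\infty m(r,t)\dd r$ is well defined, nonincreasing in $R$, with $\partial_R f=-m\le 0$, $f(0^+,t)=\|\omega_\theta(t)\|_{L^1(\Omega)}\le M$ and $f(R,t)\to0$ as $R\to\infty$. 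The first step is to turn~\eqref{evolf} into a one-sided inequality suited to an upper bound: estimating the drift by $\int_\R u_r\omega_\theta\dd z\le\|u_r(t)\|_{L^\infty(\Omega)}\,m(R,t)=-\|u_r(t)\|_{L^\infty(\Omega)}\,\partial_R f\le -C_7(M)M\,t^{-1/2}\partial_R f$ (using $\omega_\theta\ge0$, $\partial_R f\le0$ and Corollary~\ref{apriori-cor}) and discarding the term $\tfrac1R\partial_R f\le0$, which is now \emph{favorable}, I would obtain
\begin{equation*}
  \partial_t f \,\le\, \partial_R^2 f - C_7(M)M\,t^{-1/2}\,\partial_R f\,, \qquad R>0\,.
\end{equation*}
The structural point is that the natural boundary condition at the symmetry axis is homogeneous Neumann, $\partial_R f(0,t)=-m(0,t)=0$, because $\omega_\theta$ satisfies the Dirichlet condition on $\partial\Omega$; this is exactly the sign I need below, and it is what replaces the even-reflection trick of Proposition~\ref{L1prop-near}.

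Next I would compare $f$ with an explicit barrier. With $\sigma(t)=2C_7(M)M\sqrt t$ I set
\begin{equation*}
  \Phi(R,t)\,=\,M\,\mathrm{erfc}\Bigl(\tfrac{R-\frac94\rho-\sigma(t)}{2\sqrt t}\Bigr)\,,
\end{equation*}
which, being the drift–translate of a rescaled error function, solves $\partial_t\Phi=\partial_R^2\Phi-C_7(M)M\,t^{-1/2}\partial_R\Phi$ \emph{exactly}; it is the heat evolution, advected outward at the admissible drift speed, of a profile equal to $2M$ on $\{r\le\tfrac94\rho\}$ and $0$ beyond. The shift $\sigma(t)$ is chosen precisely to absorb the outward transport by the velocity field, and the step is placed at $\tfrac94\rho$, strictly between the edge $2\rho$ of $\supp\mu$ and the observation radius $3\rho$, leaving a usable gap.

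I would then establish $f\le\Phi$ on $\{R>0\}\times(0,t_1]$ by the parabolic maximum principle applied to $w=f-\Phi$, which is a subsolution of the (regular) operator above. Indeed $w\to0$ as $R\to\infty$; $w$ cannot attain a positive maximum on the axis, where $\partial_R w(0,t)=-\partial_R\Phi(0,t)>0$; and the initial comparison $w(\cdot,t_0)\le0$ holds for small $t_0$, because $\Phi(\cdot,t_0)\ge M\ge f(\cdot,t_0)$ on $\{R\le\tfrac94\rho\}$ (there $\Phi$ is decreasing with negative argument), while on $\{R>\tfrac94\rho\}$ one uses $f(\cdot,t_0)\to0$ as $t_0\to0$ coming from $\supp\mu\subset(0,2\rho]$ together with Remark~\ref{tight-rem}. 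Letting $t_0\to0$ gives $f\le\Phi$. Evaluating at $R=3\rho$ under a smallness assumption on $t_1$ analogous to~\eqref{t1small}, namely $2C_7(M)M\sqrt{t_1}\le\rho/4$, the argument of the error function is at least $\rho/(4\sqrt{t_1})$, so~\eqref{erfbound} yields $f(3\rho,t_1)\le M\,\mathrm{erfc}\bigl(\rho/(4\sqrt{t_1})\bigr)\le M\,e^{-\rho^2/(16t_1)}$, which is~\eqref{L1est-far} with $\nu=1$; if the smallness condition fails one simply enlarges $C_{11}$ to make the bound trivial.

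The step I expect to require the most care is the comparison, and specifically the \emph{sign of the boundary flux}. A naive mirror of Proposition~\ref{L1prop-near} would even-reflect $f$ across a moving inner boundary $R=b(t)$, but there the outgoing flux $\partial_R f(b(t),t)=-m(b(t),t)\le0$ produces a concave corner that destroys the subsolution property and therefore yields no upper bound at all. The remedy above is to anchor the comparison at the genuine boundary of $\Omega$, the symmetry axis, where the homogeneous Neumann condition holds with the correct sign, and to carry the outward transport inside the drift–translated error-function barrier rather than in a moving frame. The two remaining points are routine: justifying the far-field initial comparison $f(\cdot,t_0)\le\Phi(\cdot,t_0)$ for $R>\tfrac94\rho$, which follows from the Gaussian spatial decay of $f(\cdot,t_0)$ read off from the representation~\eqref{omint2}, and the bookkeeping of constants, both handled exactly as in Proposition~\ref{L1prop-near}.
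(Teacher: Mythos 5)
Your opening steps coincide with the paper's: the one-sided inequality you derive is exactly \eqref{evolf3} (same drift bound via $\omega_\theta\ge0$ and Corollary~\ref{apriori-cor}, same discarding of $\tfrac1R\partial_R f\le0$), your barrier $\Phi$ does solve the drifted equation exactly, and the arithmetic at $R=3\rho$ under $2C_7(M)M\sqrt{t_1}\le\rho/4$ reproduces \eqref{L1est-far} via \eqref{erfbound}. The genuine gap is the initial comparison $w(\cdot,t_0)\le0$ on $\{R>\tfrac94\rho\}$: there you must order $f(R,t_0)\le\Phi(R,t_0)$ at a \emph{fixed} small $t_0>0$ for \emph{all} large $R$, and $\Phi(R,t_0)$ is transcendentally small, of size roughly $M\exp\bigl(-(R-\tfrac94\rho-\sigma(t_0))^2/(4t_0)\bigr)$. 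Tightness (Remark~\ref{tight-rem}) only gives the rateless pointwise limit $f(R,t_0)\to0$ for $R>2\rho$, which cannot produce such an ordering; concretely, at $R=3\rho$ your barrier is of size about $e^{-9\rho^2/(64t_0)}$, smaller even than the bound $e^{-4\rho^2/(64t_0)}$ that the proposition itself asserts for $f(3\rho,t_0)$, so the ordering is far from automatic. Your fallback --- reading Gaussian spatial decay of $f(\cdot,t_0)$ off \eqref{omint2} --- is circular: the Duhamel term involves $\omega_\theta(s)$ for $0<s<t_0$, whose far-field decay is exactly what is unknown (no spatial decay beyond $L^1\cap L^\infty$ is assumed of the mild solution), and the Gaussian bounds of Proposition~\ref{Gauss-prop} are proved \emph{after} this proposition and depend on it through Lemma~\ref{u/r-lem}.

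The paper closes precisely this hole by not using a fixed barrier: it compares $f$ with the drift-translated heat evolution $g$ of the solution's \emph{own} profile $f(\cdot,t_0)$, extended by the constant $f(0,t_0)$ to $y<0$ and solved on the whole line. The comparison at $t=t_0$ is then an identity, so no decay rate in $R$ is needed; the support hypothesis on $\mu$ enters only when $t_0\to0$ is passed to the limit \emph{inside} the heat-kernel representation of $g$, where $f(r,t_0)\le M$ together with $f(r,t_0)\to0$ for $r>2\rho$ suffices by dominated convergence and yields \eqref{fupp}, after which the erfc estimate is essentially your final computation. Your structural remarks are otherwise sound: the paper indeed does not even-reflect (your concave-corner objection to a naive mirror of Proposition~\ref{L1prop-near} is correct), and the vanishing flux at the axis that you invoke, $\partial_R f(0^+,t)=-m(0^+,t)=0$ --- which itself deserves a word on the continuity of $\int_\R\omega_\theta(r,z,t)\dd z$ up to $r=0$ --- is what keeps the paper's constant extension a subsolution across $R=0$. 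To salvage your fixed-barrier route you would first need a quantitative Gaussian tail for $f(\cdot,t_0)$, uniform as $t_0\to0$, which is in effect what the paper's moving comparison delivers for free.
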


\begin{proof}
We proceed as in the proof of Proposition~\ref{L1prop-near}, assuming
again that $\nu =1$. We observe that the function $f(R,t)$ defined
in~\eqref{fdef} satisfies the differential inequality
\begin{equation}\label{evolf3}
  \partial_t f(R,t) \,\le\, \partial_R^2 f(R,t) - C_7(M)\frac{M}{\sqrt{t}}
  \,\partial_R f(R,t)\,, \qquad R > 0\,,
\end{equation}
which is obtained in the same way as the lower bound~\eqref{evolf2}.
Arguing as in \cite[Section~6.1]{GS}, we deduce from~\eqref{evolf3}
that, for any $t_0 \in (0,T)$,
\[
  f(R,t) \,\le\, g(R - 2C_7(M)M\sqrt{t},t)\,, \qquad
  R > 0\,, \quad t_0 \le t   < T\,,
\]
where $g(y,t)$ is the solution of the heat equation $\partial_t g =
\partial_y^2 g$ on the real line $\R$ with initial data satisfying
$g(y,t_0) = f(y,t_0)$ if $y \ge 0$ and $g(y,t_0) = f(0,t_0)$ if
$y < 0$. Taking the limit $t_0 \to 0$ in the representation formula
\[
  g(y,t) \,=\, \frac{1}{\sqrt{4\pi(t{-}t_0)}}\biggl(
  \int_{-\infty}^0 e^{-\frac{(y-r)^2}{4(t-t_0)}}  f(0,t_0) \dd r + \int_0^\infty
  e^{-\frac{(y-r)^2}{4(t-t_0)}} f(r,t_0)\dd r\biggr)\,,
\]
and using the fact that $f(r,t_0) \le M$ and $f(r,t_0) \to 0$ as
$t_0 \to 0$ if $r > 2\rho$, we deduce that
\begin{equation}\label{fupp}
  f(R,t) \,\le\,  \frac{M}{\sqrt{4\pi t}}\int_{-\infty}^{2\rho}
  e^{-(R - 2C_7(M)M\sqrt{t}-r)^2/(4t)}\dd r\,, \qquad R > 0\,,\quad t \in (0,T)\,.
\end{equation}
If $t > 0$ is small enough so that $2C_7(M)M\sqrt{t} \le \rho/2$,
it follows from~\eqref{fupp},~\eqref{erfbound} that
\[
  f(3\rho,t) \,\le\, \frac{M}{\sqrt{4\pi t}}\int_{-\infty}^{2\rho}
  e^{-(5\rho/2-r)^2/(4t)}\dd r \,\le\, M\,e^{-\rho^2/(16t)}\,,
\]
which is~\eqref{L1est-far}. If $2C_7(M)M\sqrt{t} > \rho/2$,
then~\eqref{L1est-far} follows from the trivial bound
$f(3\rho,t) \le M$, provided the constant $C_{11}$ is
chosen appropriately.
\end{proof}

\begin{cor}\label{faraxis-cor}
Under the assumptions of Proposition~\ref{L1prop-far},
the axisymmetric vorticity $\omega_\theta$ has a finite impulse
\begin{equation}\label{imp-def}
 \cI \,=\, \int_\Omega r^2 \omega_\theta(r,z,t)\dd r\dd z \,=\,
 \int_\Omega r^2 \dd\mu(r,z)\,, \qquad t \in (0,T)\,.
\end{equation}
In particular, the impulse $\cI$ is a conserved quantity.
\end{cor}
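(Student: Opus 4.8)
The plan is to establish three facts in turn: that $\cI(t)$ is finite for each $t\in(0,T)$, that it does not depend on $t$, and that its common value equals $\int_\Omega r^2\dd\mu$.

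First I would prove finiteness. Since $\mu$ is positive, Corollary~\ref{adj-cor2} gives $\omega_\theta\ge0$, so the function $f(R,t)$ of \eqref{fdef} is nonnegative and nonincreasing in $R$; writing $m(r,t):=\int_\R\omega_\theta(r,z,t)\dd z\ge0$ we have $\partial_r f=-m$. The Gaussian tail bound \eqref{fupp}, established in the course of proving Proposition~\ref{L1prop-far}, shows that $f(R,t)$ decays faster than any power of $R$ as $R\to\infty$, locally uniformly in $t$. An integration by parts then gives $\cI(t)=\int_0^\infty r^2 m(r,t)\dd r=2\int_0^\infty r f(r,t)\dd r<\infty$, the boundary term $r^2 f(r,t)$ vanishing at both ends. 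The same bound shows that $\int_{r>R}r^2 m(r,t)\dd r\to0$ as $R\to\infty$ uniformly in $t$ small, i.e. the weight $r^2$ is uniformly integrable against the family $(\omega_\theta(t)\dd r\dd z)$.

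Next I would prove conservation. Starting from the conservation form \eqref{omeq2}, multiplying by $r^2$ and integrating over $\Omega$, the two diffusive contributions cancel (the $\partial_r^2\omega_\theta$ term produces $+2\nu\int_\Omega\omega_\theta\dd r\dd z$ and the $\partial_r(\omega_\theta/r)$ term $-2\nu\int_\Omega\omega_\theta\dd r\dd z$, while the $\partial_z^2$ term integrates to zero), and the advective term reduces, after one integration by parts in $r$, to
\[
  \frac{\D}{\D t}\,\cI(t) \,=\, 2\int_\Omega r\,u_r\,\omega_\theta\dd r\dd z\,.
\]
The heart of the matter is that the right-hand side vanishes. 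Substituting $\omega_\theta=\partial_z u_r-\partial_r u_z$ and using the divergence-free relation $\partial_r(r u_r)=-r\,\partial_z u_z$ from \eqref{diffBS}, two integrations by parts give $\int_\Omega r u_r\omega_\theta\dd r\dd z=0$; equivalently, $r u_r\omega_\theta$ is (up to the factor $2\pi$) the vertical component of $u\times\curl u=\tfrac12\nabla|u|^2-(u\cdot\nabla)u$, which is a pure divergence and hence integrates to zero over $\R^3$. Thus $\cI$ is constant on $(0,T)$.

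Finally, combining conservation with the uniform integrability from the first step and the weak convergence $\omega_\theta(t)\dd r\dd z\weakto\mu$ (Proposition~\ref{trace-prop} and Remark~\ref{tight-rem}), I would let $t\to0$ to identify the constant value of $\cI$ with $\int_\Omega r^2\dd\mu$, which is finite since $\mu$ is supported in $(0,2\rho]\times\R$. The main obstacle throughout is the rigorous justification of the integrations by parts at spatial infinity: both the reduction of $\tfrac{\D}{\D t}\cI$ to the boundary-free display above and the quadratic identity $\int_\Omega r u_r\omega_\theta\dd r\dd z=0$ require that the relevant $r$-weighted products of $u$ and $\omega_\theta$ carry no flux as $r\to\infty$. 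This decay is not provided by the scale-invariant a priori estimates alone, and I would extract it from the exponential localization of $\omega_\theta$ furnished by \eqref{fupp} together with the Biot-Savart law, inserting a smooth cutoff in $r$ and controlling the resulting commutator errors by dominated convergence before letting the cutoff tend to the identity.
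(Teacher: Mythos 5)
Your proposal is correct, and two of its three steps coincide with the paper's argument: finiteness via the identity $\cI(t)=2\int_0^\infty r f(r,t)\dd r$ combined with the Gaussian tail \eqref{fupp} (the paper implements exactly this through the cutoff identity \eqref{omf1} and the limit $R\to\infty$), and the identification $\cI=\int_\Omega r^2\dd\mu$ via tightness (Remark~\ref{tight-rem}) together with the uniformity in small $t$ of the $R\to\infty$ limit, which is verbatim the paper's reasoning. Where you genuinely diverge is the conservation step: the paper does not prove conservation at all but cites \cite[Lemma~6.4]{GS}, whereas you re-derive it from \eqref{omeq2}, obtaining $\frac{\D}{\D t}\,\cI(t)=2\int_\Omega r\,u_r\,\omega_\theta\dd r\dd z$ after the correct cancellation of the two radial diffusive terms, and then killing the right-hand side by the quadratic identity coming from \eqref{diffBS} (equivalently, $u_r\omega_\theta$ is the vertical component of $u\times\omega$, a pure divergence). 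Your formal computation is right, and you correctly flag the justification of the boundary terms as the main obstacle; the one point where your sketch is too optimistic is that \eqref{fupp} localizes $\omega_\theta$ \emph{only in the radial direction} --- under the hypotheses of Proposition~\ref{L1prop-far} the measure $\mu$, hence the solution, may be spread over all of $z$ --- so the cutoff-and-dominated-convergence scheme controls the $r\to\infty$ fluxes but not by itself the $z\to\pm\infty$ fluxes of the quadratic terms ($r u_r^2$, $r u_z^2$), whose vanishing needs something like finite weighted energy $\int_\Omega r|u|^2\dd r\dd z<\infty$; this holds for $t>0$ but requires a separate stream-function estimate, and it is precisely the bookkeeping that the citation to \cite{GS} packages away. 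In short, your route buys a self-contained proof of the invariance at the cost of extra decay estimates on $u$, while the paper's route buys brevity by outsourcing that step.
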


\begin{proof}
We assume that $\nu =1$. Let $\chi : [0,\infty) \to \R$ be
a smooth, nonincreasing function such that $\chi(r) = 1$
for $r \in [0,1]$ and $\chi(r) = 0$ for $r \ge 2$. Using
definition~\eqref{fdef} and integrating by parts we obtain
the identity
\begin{equation}\label{omf1}
  \int_\Omega r^2 \chi(r/R) \omega_\theta(r,z,t)\dd r\dd z
  \,=\, \int_0^\infty r\Bigl(2\chi(r/R) + (r/R) \chi'(r/R)
  \Bigr) f(r,t)\dd r\,,
\end{equation}
which holds for all $R > 0$ and all $t \in (0,T)$. For any fixed $t
\in (0,T)$, we know from~\eqref{fupp} that $f(R,t)$ decays rapidly to
zero at infinity, thus taking the limit $R \to \infty$ in~\eqref{omf1}
we obtain
\begin{equation}\label{omf2}
  \int_\Omega r^2 \omega_\theta(r,z,t)\dd r\dd z \,=\,
  2 \int_0^\infty r f(r,t)\dd r \,<\, \infty\,, \qquad
  t \in (0,T)\,.
\end{equation}
The left-hand side of~\eqref{omf2} is the total impulse $\cI$
of the axisymmetric vorticity $\omega_\theta$, which is known
to be conserved under the evolution defined by~\eqref{omeq},
see e.g. \cite[Lemma~6.4]{GS}.

On the other hand, for any fixed $R > 2\rho$, the left-hand side
of~\eqref{omf1} converges as $t \to 0$ to the quantity
\[
  \cI_0 \,=\, \int_\Omega r^2\chi(r/R)\dd\mu(r,z) \,\equiv\,
  \int_\Omega r^2 \dd\mu(r,z)\,.
\]
Convergence holds by Remark~\ref{tight-rem}, and the limit does not
depend on $R > 2\rho$ since the measure $\mu$ is supported in
$(0,2\rho] \times \R$. In fact $\cI_0 = \cI$, because the convergence
of~\eqref{omf1} to~\eqref{omf2} as $R \to \infty$ holds {\em uniformly
in time} if $t > 0$ is sufficiently small. Indeed, if $2C_7(M)M\sqrt{t}
\le \rho$, it follows from~\eqref{fupp},~\eqref{erfbound} that
$f(R,t) \le M\,e^{-(R-3\rho)^2/(4t)}$ for all $R \ge 3\rho$, which in
turns implies that the quantity $\int_R^\infty rf(r,t)\dd r$ converges
to zero uniformly in time as $R \to \infty$. This proves the
uniform convergence of the right-hand side of~\eqref{omf1} to that
of~\eqref{omf2} as $R \to \infty$.
\end{proof}

The next step is a general estimate for nonnegative solutions
of~\eqref{omeq2} with finite impulse.

\begin{prop}\label{L1decay}
Assume that $\omega_\theta \in C^0((0,T),L^1(\Omega) \cap L^\infty(\Omega))$
is a nonnegative solution of \eqref{omeq2} which is uniformly bounded
in $L^1(\Omega)$ and has finite impulse $\cI$. Then
\begin{equation}\label{L1decayest}
  \|\omega_\theta(t)\|_{L^1(\Omega)} \,\le\, \frac{C_{12}(M)\cI}{\nu t}\,,
  \qquad \hbox{for all } t \in (0,T)\,,
\end{equation}
where $C_{12}$ is a positive constant depending only on the quantity
$M$ defined in~\eqref{Mdef}.
\end{prop}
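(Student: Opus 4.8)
The plan is to collapse the problem to a single scalar function on $\R^2$ and to prove an $L^1\to L^\infty$ smoothing estimate for it, the conserved impulse playing the role of the (fixed) $L^1$ mass. First I would set $\nu=1$, the general case following by the usual parabolic rescaling. Writing $n(r,t)=\int_\R\omega_\theta(r,z,t)\dd z\ge 0$ and $f(R,t)=\int_R^\infty n(r,t)\dd r$, I introduce the radial function $F(x,t)=f(|x|,t)$ on $\R^2$. Since $\omega_\theta\ge 0$, the function $F$ is nonnegative and radially nonincreasing, so that $\|F(t)\|_{L^\infty(\R^2)}=f(0,t)=\|\omega_\theta(t)\|_{L^1(\Omega)}$, while an integration by parts gives $\|F(t)\|_{L^1(\R^2)}=2\pi\int_0^\infty Rf\dd R=\pi\cI$, which is constant in time because $\cI$ is conserved. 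Moreover, exactly as in the derivation of \eqref{evolf} from \eqref{omeq2}, the function $F$ solves $\partial_t F=\Delta F+g$ in $\R^2$, where $\Delta=\partial_R^2+\frac1R\partial_R$ is the two–dimensional radial Laplacian and $g(R,t)=\int_\R u_r(R,z,t)\omega_\theta(R,z,t)\dd z$ is the advective flux. With this reduction, estimate \eqref{L1decayest} is precisely the bound $\|F(t)\|_{L^\infty}\le C(M)\|F\|_{L^1}/(\pi t)$ for this forced heat equation.

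The heart of the matter is the forcing $g$, whose sign is unknown. The decisive observation is to control it through the near–axis velocity bound $\|u_r(t)/r\|_{L^\infty}\le C_6M/t$ from Corollary~\ref{apriori-cor} rather than through $\|u_r\|_{L^\infty}$: since $|u_r(R,z,t)|\le R\,\|u_r/r\|_{L^\infty}$, one obtains $|g(R,t)|\le \frac{C_6M}{t}\,R\,n(R,t)=\frac{C_6M}{t}\,R\,(-\partial_R f)$. The extra factor $R$ is what makes the forcing carry the same weight as the dissipation. Indeed, for $E(t)=\|F(t)\|_{L^2}^2$ one has the identity $\frac12E'=-\|\nabla F\|_{L^2}^2+\int_{\R^2}Fg$, and the weighted bound together with an integration by parts yields $|\int Fg|\le \frac{C_6M}{t}\|F\|_{L^2}^2$. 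Inserting the two–dimensional Nash inequality $\|F\|_{L^2}^2\le C_N\|F\|_{L^1}\|\nabla F\|_{L^2}=C_N\pi\cI\,\|\nabla F\|_{L^2}$ produces the closed inequality $E'\le -aE^2+\frac btE$ with $a=2(C_N\pi\cI)^{-2}$ and $b=2C_6M$. Integrating it (for instance through $v=1/E$, which satisfies $(t^bv)'\ge a\,t^b$ and, after letting the initial time tend to $0$, gives $v(t)\ge \frac{a}{b+1}t$) yields $\|F(t)\|_{L^2}^2\le \frac{(1+2C_6M)(C_N\pi\cI)^2}{2\,t}$, uniformly in $M$.

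To upgrade $L^2$ to $L^\infty$ I would repeat the computation for $\|F\|_{L^p}^p$ with $p=2^k$. The identity $\frac{d}{dt}\|F\|_p^p=-\frac{4(p-1)}{p}\|\nabla F^{p/2}\|_{L^2}^2+p\int F^{p-1}g$ together with the same weighted bound on $g$ gives a source term $\frac{2C_6M}{t}\|F\|_p^p$ whose coefficient is independent of $p$, and Nash applied to $F^{p/2}$ closes the recursion using the decay of $\|F\|_{L^{p/2}}$ obtained at the previous step. This produces a bound $\|F(t)\|_p^p\le C(p,M)\cI^p/t^{p-1}$, where the constants obey a recursion roughly of the form $\log C_{2^k}\approx 2\log C_{2^{k-1}}+O(k)$; this keeps $C_p^{1/p}$ bounded as $p\to\infty$, so that passing to the limit gives $\|F(t)\|_{L^\infty}\le C_{12}(M)\cI/t$. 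Returning to the original variables and restoring $\nu$ yields \eqref{L1decayest}.

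The main obstacle is exactly the control of the advective flux $g$. A crude bound by $\|u_r\|_{L^\infty}\sim t^{-1/2}$ makes the forcing as singular as the solution itself, and a Duhamel argument against the two–dimensional $L^1\to L^\infty$ heat kernel fails because $\int^t(t-s)^{-1}\dd s$ diverges. The resolution — using the scale–invariant estimate on $u_r/r$ to gain the weight $R$, so that the forcing is absorbed by the dissipation via Nash's inequality instead of being treated perturbatively — is what makes the argument close for arbitrarily large $M$. A secondary technical point is to check that the Moser iteration constants stay controlled as $p\to\infty$, which is routine once the source coefficient is seen to be $p$–independent.
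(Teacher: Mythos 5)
Your proof is correct, but it takes a genuinely different route from the paper's. The paper argues through the mild (Duhamel) formulation \eqref{omint} with $t_0 = t/2$: the linear term is bounded by the weighted semigroup estimate $\|S(t)\omega_0\|_{L^1(\Omega)} \le C t^{-1}\|r^2\omega_0\|_{L^1(\Omega)}$ (valid for nonnegative data), and the integral term by combining the weighted smoothing estimate of \cite[Proposition~3.5]{GS} with the interpolation inequalities $\|r^{1/2}\omega_\theta\|_{L^1} \le \|r^2\omega_\theta\|_{L^1}^{1/4}\|\omega_\theta\|_{L^1}^{3/4}$ and $\|u\|_{L^\infty} \le C\|r^2\omega_\theta\|_{L^1}^{1/4}\|\omega_\theta\|_{L^1}^{1/4}\|\omega_\theta/r\|_{L^\infty}^{1/2}$ from \cite{FS}; a two-step bootstrap then upgrades $\|\omega_\theta(t)\|_{L^1} \le C(M)\cI^{1/2}t^{-1/2}$ to the claimed $C(M)\cI/t$. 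You instead exploit positivity more structurally: the cumulative mass $f(R,t)$, viewed as the radial function $F$ on $\R^2$, turns the desired $L^1$ decay of $\omega_\theta$ into an $L^1\to L^\infty$ smoothing statement for a forced two-dimensional heat equation with conserved mass $\|F\|_{L^1} = \pi\cI$, and your key observation --- using the scale-invariant bound $\|u_r(t)/r\|_{L^\infty} \le C_6M/t$ from Corollary~\ref{apriori-cor} to gain the weight $R$ in the flux, so that after integration by parts the source contributes the $p$-independent coefficient $2C_6M/t$ --- lets Nash's inequality and a Moser iteration close the argument for arbitrary $M$, exactly because the resulting ODE $y' \le -a_p t^{p-2}\cI^{-p}y^2 + bt^{-1}y$ integrates from $t_0 \to 0$ with no initial-data information. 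What your route buys is self-containedness (no weighted linear estimates, no velocity interpolation from \cite{FS}) at the price of tracking Moser constants; what the paper's route buys is brevity, since the heavy lifting is outsourced to estimates already established in \cite{GS,FS}. Both proofs use nonnegativity and conservation of the impulse in an essential way.

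Two minor points. The phrase ``uniformly in $M$'' after your $L^2$ bound is a slip, since $b = 2C_6M$ enters the constant --- harmless, as \eqref{L1decayest} permits dependence on $M$. And your integrations by parts implicitly need the boundary terms to vanish: at infinity this follows from $R^2 f \to 0$ (monotonicity of $f$ plus finiteness of $\int_0^\infty Rf\dd R$, whence $R^2 f^p \le M^{p-1}R^2 f \to 0$), and at the origin from $\partial_R f(0^+,t) = 0$ (the Dirichlet condition for $\omega_\theta$ at $r=0$), which also makes $F$ genuinely $C^2$ across the origin of $\R^2$ so that the forced heat equation holds there; these verifications are routine given parabolic smoothing, but should be recorded.
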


\begin{proof}
The proof is essentially contained in \cite[Section~6.2]{GS}, although
estimate~\eqref{L1decayest} is not explicitly stated there. For
completeness we provide here the missing details, assuming as usual
that $\nu =1$. We first observe that it is sufficient to establish
\eqref{L1decayest} for $t \ge T_* = \cI/M$, because for smaller times
we obviously have $\|\omega_\theta(t)\|_{L^1(\Omega)} \le M \le \cI/t$.
We start from the integral equation~\eqref{omint} with $t_0 = t/2$, namely
\begin{equation}\label{omint3}
  \omega_\theta(t) \,=\, S(t/2)\omega_\theta(t/2) - \int_{t/2}^t
  S(t-s)\div_*(u(s)\omega_\theta(s))\dd s\,, \qquad
  t \ge T_*\,.
\end{equation}
To bound the first term in the right-hand side, we use the linear
estimate
\[
  \|S(t)\omega_0\|_{L^1(\Omega)} \,\le\, \frac{C}{t}\int_\Omega
  r^2 \omega_0(r,z)\dd r\dd z\,, \qquad t > 0\,,
\]
which holds for all nonnegative $\omega_0 \in L^1(\Omega)$ with finite
impulse, and can be established by a direct calculation based on the
explicit formula for the linear semigroup $S(t)$ given in
\cite[Section~3]{GS}. We thus have $\|S(t/2)\omega_\theta(t/2)
\|_{L^1(\Omega)} \le C\cI/t$ for some $C > 0$. On the other hand,
applying the weighted inequality given in \cite[Proposition~3.5]{GS},
we find
\[
  \|S(t-s)\div_*(u(s)\omega_\theta(s))\|_{L^1(\Omega)} \,\le\,
  \frac{C}{(t-s)^{3/4}}\,\|u(s)\|_{L^\infty(\Omega)}
  \|r^{1/2}\omega_\theta(s)\|_{L^1(\Omega)}\,,
\]
for $s \in (0,t)$. If we now interpolate $\|r^{1/2}\omega_\theta\|_{L^1}
\le \|r^2 \omega_\theta\|_{L^1(\Omega)}^{1/4} \|\omega_\theta\|_{L^1(\Omega)}^{3/4}$
and use the estimate
\[
  \|u\|_{L^\infty(\Omega)} \,\le\, C \|r^2 \omega_\theta\|_{L^1(\Omega)}^{1/4}
  \|\omega_\theta\|_{L^1(\Omega)}^{1/4} \|\omega_\theta/r\|_{L^\infty(\Omega)}^{1/2}\,,
\]
which is established in \cite[Section~2]{FS}, we obtain
\[
  \|S(t-s)\div_*(u(s)\omega_\theta(s))\|_{L^1(\Omega)} \,\le\,
  \frac{C}{(t-s)^{3/4}}\,\|r^2\omega_\theta(s)\|_{L^1(\Omega)}^{1/2}
  \|\omega_\theta(s)\|_{L^1(\Omega)} \|\omega_\theta(s)/r\|_{L^\infty(\Omega)}^{1/2}\,.
\]
As $\|r^2\omega_\theta(s)\|_{L^1(\Omega)} = \cI$ and $\|\omega_\theta(s)/r
\|_{L^\infty(\Omega)} \le C_4 Ms^{-3/2}$ by Lemma~\ref{apriori-lem}, we
deduce from~\eqref{omint3}
\begin{equation}\label{omint-est}
  \|\omega_\theta(t)\|_{L^1(\Omega)} \,\le\, \frac{C\cI}{t} +
  C M^{1/2}\cI^{1/2} \int_{t/2}^t \frac{\|\omega_\theta(s)\|_{L^1(\Omega)}}{
  (t-s)^{3/4}s^{3/4}}\dd s\,, \qquad t \ge T_*\,.
\end{equation}
The end of the proof is a straightforward bootstrap argument.
First, since $\|\omega_\theta(t)\|_{L^1(\Omega)} \le M$, estimate
\eqref{omint-est} shows that $\|\omega_\theta(t)\|_{L^1(\Omega)}
\le C(M)\cI^{1/2}t^{-1/2}$ for $t \ge T_*$, hence also for all
$t > 0$. Inserting this bound into the right-hand side of
\eqref{omint-est}, we conclude that $\|\omega_\theta(t)\|_{L^1(\Omega)}
\le C(M)\cI/t$, which is the desired result.
\end{proof}

\begin{cor}\label{faraxis-cor2}
Under the assumptions of Proposition~\ref{L1prop-far} we have
\begin{equation}\label{u/r-far}
  \Bigl\|\frac{u_r(t)}{r}\Bigr\|_{L^\infty(\Omega)} \,\le\,
  \frac{C_{13}(M)M}{t}\Bigl(\frac{\rho^2}{\nu t}\Bigr)^{1/3}\,,
  \qquad t \in (0,T)\,,
\end{equation}
where $C_{13}$ is a positive constant depending only on $M = \|\mu\|_\tv/\nu$.
\end{cor}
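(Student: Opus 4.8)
The plan is to combine the scale-invariant Biot-Savart estimate \eqref{BSest2} with two a priori bounds established earlier: the pointwise control of $\omega_\theta/r$ from Lemma~\ref{apriori-lem}, and the improved $L^1$ decay of Proposition~\ref{L1decay}. The whole point is that, because the initial measure is concentrated in a bounded radial strip, the impulse $\cI$ is finite and controlled by $\rho^2$, which forces $\|\omega_\theta(t)\|_{L^1(\Omega)}$ to decay like $\cI/(\nu t)$ rather than merely staying bounded by $M\nu$. This corollary is thus the exact mirror image of Corollary~\ref{nearaxis-cor}: there one exploited the near-axis smallness of Proposition~\ref{L1prop-near} together with the second inequality in \eqref{om-apriori}, whereas here one exploits the global $L^1$ decay together with the first inequality in \eqref{om-apriori}.

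First I would record the elementary bound on the impulse. Under the assumptions of Proposition~\ref{L1prop-far} the measure $\mu$ is positive and supported in $(0,2\rho]\times\R$, so Corollary~\ref{adj-cor2} gives $\omega_\theta \ge 0$, and Corollary~\ref{faraxis-cor} shows that the impulse is finite and conserved, with $\cI = \int_\Omega r^2 \dd\mu \le (2\rho)^2 \|\mu\|_\tv = 4\rho^2 M\nu$ thanks to the support condition. Since $\omega_\theta$ is nonnegative, uniformly bounded in $L^1(\Omega)$, and has finite impulse, Proposition~\ref{L1decay} applies and yields
\[
  \|\omega_\theta(t)\|_{L^1(\Omega)} \,\le\, \frac{C_{12}(M)\cI}{\nu t}
  \,\le\, \frac{4 C_{12}(M) M\rho^2}{t}\,, \qquad t \in (0,T)\,.
\]

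Next I would feed this into the Biot-Savart estimate. Combining \eqref{BSest2} with the first inequality in \eqref{om-apriori}, I obtain
\[
  \Bigl\|\frac{u_r(t)}{r}\Bigr\|_{L^\infty(\Omega)} \,\le\, C_3
  \|\omega_\theta(t)\|_{L^1(\Omega)}^{1/3}
  \Bigl\|\frac{\omega_\theta(t)}{r}\Bigr\|_{L^\infty(\Omega)}^{2/3}
  \,\le\, C_3 \Bigl(\frac{4 C_{12}(M) M\rho^2}{t}\Bigr)^{1/3}
  \Bigl(\frac{C_4 M}{t\sqrt{\nu t}}\Bigr)^{2/3}\,.
\]
Collecting powers then finishes the argument: the exponents of $M$ add up to $1$, the exponents of $t$ to $-4/3$, and a single factor $\nu^{-1/3}$ survives, which is precisely the right-hand side of \eqref{u/r-far} with $C_{13}(M) = C_3 (4 C_{12}(M))^{1/3} C_4^{2/3}$.

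I do not anticipate any genuine obstacle, since every ingredient is already in hand; the only step requiring a moment's thought is recognizing that the support hypothesis must be converted into the impulse bound $\cI \le 4\rho^2 M\nu$ so that Proposition~\ref{L1decay} can be invoked. One point worth flagging is that the resulting estimate is informative only in the regime $\nu t \gtrsim \rho^2$: for small $t$ the factor $(\rho^2/(\nu t))^{1/3}$ exceeds one, so \eqref{u/r-far} is then weaker than the uniform bound $C_6 M/t$ of Corollary~\ref{apriori-cor}, but it remains a valid upper bound for all $t \in (0,T)$, which is all that is claimed.
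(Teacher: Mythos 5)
Your proof is correct and coincides with the paper's own argument: both convert the support hypothesis into the impulse bound $\cI \le 4\rho^2 M\nu$ via Corollary~\ref{faraxis-cor}, invoke the $L^1$ decay~\eqref{L1decayest} of Proposition~\ref{L1decay}, and conclude through~\eqref{BSest2} combined with the first inequality in~\eqref{om-apriori}. Your version merely spells out the exponent bookkeeping and the nonnegativity check (Corollary~\ref{adj-cor2}) that the paper leaves implicit.
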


\begin{proof} Since $\supp(\mu) \subset (0,2\rho] \times \R$,
Corollary~\ref{faraxis-cor} shows that $\cI \le 4\rho^2 \|\mu\|_\tv =
4\rho^2 M\nu$. Thus estimate~\eqref{u/r-far} immediately follows
from~\eqref{BSest2},~\eqref{L1decayest}, and the first inequality
in~\eqref{om-apriori}.
\end{proof}

\subsection{Gaussian estimates for the viscous vortex ring}\label{sec33}

Finally, we consider the particular case where the initial measure
$\mu$ is a vortex filament located at some point $(\bar r, \bar z) \in
\Omega$, namely $\mu = \Gamma \delta_{(\bar r, \bar z)}$ for some
$\Gamma > 0$. We of course have $\|\mu\|_\tv = \Gamma$, hence
$M = \Gamma/\nu$, and $\cI = \Gamma \bar r^2$. The goal of this section
is to prove the following Gaussian estimate on the axisymmetric vorticity:

\begin{prop}\label{Gauss-prop}
Assume that $\omega_\theta \in C^0((0,T),L^1(\Omega) \cap L^\infty(\Omega))$
is a mild solution of~\eqref{omeq} which is uniformly bounded in
$L^1(\Omega)$, and such that $\omega_\theta(\cdot,t)\dd r\dd z \weakto
\Gamma \delta_{(\bar r, \bar z)}$ as $t \to 0$ for some $\Gamma > 0$ and some
$(\bar r, \bar z) \in \Omega$. For any $\eta \in (0,1)$ we have the
pointwise estimate
\begin{equation}\label{Gauss-est}
  0 \,<\, \omega_\theta(r,z,t) \,\le\, K_\eta(M)\,\frac{\Gamma}{\nu t}\,
  \exp\Bigl(-\frac{1-\eta}{4\nu t}\bigl((r-\bar r)^2 +
  (z-\bar z)^2\bigr)\Bigr)\,,
\end{equation}
for all $t \in (0,T)$ and all $(r,z) \in \Omega$, where the constant
$K_\eta(M)$ depends only on $\eta$ and $M = \Gamma/\nu$.
\end{prop}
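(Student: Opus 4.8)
The plan is to prove the upper bound in \eqref{Gauss-est} by an Aronson-type comparison argument: construct an explicit Gaussian supersolution and invoke the parabolic maximum principle, after first neutralizing the zeroth-order term $\tfrac{u_r}{r}\omega_\theta$. As usual I reduce to $\nu = 1$ by scaling. The strict positivity is essentially already available: since $\mu = \Gamma\,\delta_{(\bar r,\bar z)}$ is positive, Corollary~\ref{adj-cor2} gives $\omega_\theta \ge 0$, and the strong maximum principle for the smooth (for $t>0$) solution of \eqref{omeq}, together with $\omega_\theta\not\equiv 0$, upgrades this to $\omega_\theta>0$. Thus only the Gaussian upper bound remains.

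The key preliminary is that the support of $\mu$ is the single point $(\bar r,\bar z)$, hence simultaneously bounded away from the axis and bounded in the radial direction, so both Corollary~\ref{nearaxis-cor} and Corollary~\ref{faraxis-cor2} apply with $\rho = \bar r/2$. The first gives $\|u_r(s)/r\|_{L^\infty}\lesssim s^{-2/3}$ for short times and the second $\lesssim s^{-4/3}$ for large times; taking the smaller of the two and integrating yields the crucial bound $\int_0^T \|u_r(s)/r\|_{L^\infty}\dd s\le C(M)M$, with a constant depending only on $M$. This time-integrability is exactly the ingredient that is absent in the general axisymmetric setting. Setting $W = \omega_\theta\exp\bigl(-\int_0^t\|u_r(s)/r\|_{L^\infty}\dd s\bigr)\ge 0$, a direct computation from \eqref{omeq} shows
\[
  \partial_t W + u\cdot\nabla W - \Delta W \,=\, \Bigl(\frac{u_r}{r} - \Bigl\|\frac{u_r}{r}\Bigr\|_{L^\infty}\Bigr)W - \frac{W}{r^2}\,\le\,0\,,
\]
where $\Delta = \partial_r^2 + \frac1r\partial_r + \partial_z^2$, so $W$ is a genuine subsolution of the advection--diffusion operator $\mathcal P := \partial_t + u\cdot\nabla - \Delta$. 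It therefore suffices to bound $W$, the bounded factor $\exp\bigl(\int_0^T\|u_r/r\|_{L^\infty}\bigr)\le e^{C(M)M}$ being absorbed into $K_\eta(M)$.

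For the supersolution I take $\bar\omega = \frac{A}{t}\,e^{-a|\Phi|^2/t}$ with $a = (1-\eta)/4$, $\Phi = (r-\bar r,z-\bar z)$, and $\xi = |\Phi|/\sqrt t$. A short computation gives
\[
  \mathcal P\bar\omega \,=\, \frac{\bar\omega}{t}\Bigl(\eta\bigl(a\xi^2-1\bigr) - 2a\,u\cdot\Phi\Bigr) + \frac{2a(r-\bar r)}{r\,t}\,\bar\omega\,,
\]
and, bounding $|u\cdot\Phi|\le\|u\|_{L^\infty}|\Phi|\le C_7(M)M\,\xi$ via Corollary~\ref{apriori-cor}, the first group is nonnegative as soon as the quadratic $a\eta\,\xi^2 - 2aC_7(M)M\,\xi - \eta$ is, that is, in the outer region $\{\xi\ge\xi_*(\eta,M)\}$. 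This is precisely where the near-optimal exponent comes from: the drift has size $M/\sqrt t$ and hence becomes subdominant once $|\Phi|\gg M\sqrt t$, the only loss being the $O(\eta)$ weakening of the Gaussian constant. In the complementary parabolic neighborhood $\{\xi<\xi_*\}$ I instead use the $L^\infty$ bound $\omega_\theta\le C_5(M)M/t$ from Lemma~\ref{apriori-lem}, which is dominated by $\bar\omega$ provided $A\ge C_5(M)M\,e^{a\xi_*^2}=:K_\eta(M)\,\Gamma$.

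Finally I compare $W$ and $\bar\omega$ on the outer region: $\mathcal P(W-\bar\omega)\le0$ there, $W = 0\le\bar\omega$ on the axis $r=0$ (homogeneous Dirichlet condition), and $W\le\bar\omega$ on the inner boundary $\{\xi=\xi_*\}$ by the previous step, whence $W\le\bar\omega$ throughout and \eqref{Gauss-est} follows. I expect two points to require the most care. The first is the last term $\frac{2a(r-\bar r)}{rt}\bar\omega$ above, coming from the singular advection $\frac1r\partial_r$ in the axisymmetric Laplacian: it is favorable for $r>\bar r$ but adverse on the thin sliver $r\lesssim t/\bar r$ near the axis, where, however, both $\bar\omega$ and, by Proposition~\ref{L1prop-near}, the vorticity itself are exponentially small, so this region is harmless. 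The second, more serious, is the matching at $t\to0$: in the outer region the mass of $\omega_\theta(t)$ escapes to $(\bar r,\bar z)$, and one must rule out an over-shoot of $\bar\omega$ without assuming the very bound being proved. The clean remedy is to run the comparison on slices $[t_0,t_1]$ and let $t_0\to0$, controlling the data at $t=t_0$ by a crude Gaussian bound of Aronson--Osada type (available with some non-sharp constant from the divergence-free structure and Lemma~\ref{lemma-bmo-bound}) and then upgrading the exponent to $(1-\eta)/4$ through the supersolution, the exponentially small outer $L^1$ mass from Proposition~\ref{L1prop-near} ensuring that the $t_0$-contribution does not degrade the final constant.
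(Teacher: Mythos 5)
Your reduction is sound and in fact coincides with the paper's preparation: positivity via Corollary~\ref{adj-cor2} plus the strong maximum principle, and the time-integrability $\int_0^T\|u_r(t)/r\|_{L^\infty(\Omega)}\dd t\le C(M)M$, obtained by combining Corollaries~\ref{nearaxis-cor} and~\ref{faraxis-cor2} with $\rho=\bar r/2$, is precisely Lemma~\ref{u/r-lem}; the gauge transformation $W$ and the algebra of your supersolution computation are also correct. From there, however, the paper takes a different route: it passes to the three-dimensional vector equation \eqref{om-evol} on all of $\R^3$ (so there is no axis boundary, the singular terms $\frac1r\partial_r$ and $\frac{1}{r^2}$ are absorbed into the vector Laplacian acting on $\omega_\theta e_\theta$, and only the potential $V=u_r/r$ remains), proves a Fabes--Stroock/Aronson upper bound on the fundamental solution (Proposition~\ref{fundsol-prop}) with explicit dependence on $K_1=\sup_t(t/\nu)^{1/2}\|U\|_{L^\infty}$ and $K_2=\int_0^T\|V\|_{L^\infty}\dd t$, integrates out the angle (Lemma~\ref{Phitilde-lem}), and then lets $s\to 0$ in the representation formula using the weak convergence to $\Gamma\,\delta_{(\bar r,\bar z)}$ together with tightness (Remark~\ref{tight-rem}); the correction $K_1|x-y|/\sqrt{\nu(t-s)}$ in \eqref{fund-bound} is converted into the exponent $(1-\eta)/4$ by Young's inequality, and no comparison argument is needed at all.

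Your version has genuine gaps at exactly the two points you flag, and neither patch works as sketched. First, the axis sliver: once you excise $\{r\lesssim t/(\eta\bar r)\}$, the comparison requires $W\le\bar\omega$ on the inner lateral boundary, where $\bar\omega\sim t^{-1}e^{-(1-\eta)\bar r^2/(4\nu t)}$ (since there $(r-\bar r)^2+(z-\bar z)^2\ge\bar r^2/2$, say); but Proposition~\ref{L1prop-near} (with $\rho=\bar r/2$) gives only $L^1$ smallness of order $e^{-\bar r^2/(64\nu t)}$ near the axis, and even upgraded to a pointwise bound by local parabolic smoothing this exponent is strictly weaker than $(1-\eta)\bar r^2/(4\nu t)$ for any $\eta<15/16$, while the other a priori bounds ($\omega_\theta\le C_5(M)M/t$ and $\omega_\theta/r\le C_4Mt^{-3/2}(\nu t)^{-\dots}$ type) are nowhere near exponentially small there --- so ``this region is harmless'' is quantitatively false with the tools you cite. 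Second, the initialization as $t_0\to 0$ rests on an unproved ``crude Aronson--Osada bound'': Osada's theory does not apply directly in the half-plane, because the conservation-form term $\partial_r(\omega_\theta/r)$ in \eqref{omeq2} is a drift $\frac1r e_r$ that is neither bounded nor the divergence of a bounded tensor near $r=0$ (Lemma~\ref{lemma-bmo-bound} controls only $u$, not this term), and the natural repair --- passing to the 3D formulation and bounding the fundamental solution of \eqref{om-evol} with the potential $V$ handled through $K_2$ --- is exactly Proposition~\ref{fundsol-prop}. But once you have that bound, the representation formula and the weak limit $s\to0$ already deliver \eqref{Gauss-est} with the near-optimal exponent, rendering the whole supersolution comparison superfluous; without it, your comparison has no admissible datum at $t_0$ and the argument is circular. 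In short: the skeleton and the identified obstacles are right, but both patches are missing the substance that the paper supplies through Proposition~\ref{fundsol-prop} and Lemma~\ref{Phitilde-lem}.
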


As a first step in the proof of Proposition~\ref{Gauss-prop}, we apply the
results of Sections~\ref{sec31} and \ref{sec32} with $\rho = \bar r/2$
and obtain the following integral estimate:

\begin{lem}\label{u/r-lem}
Under the assumptions of Proposition~\ref{Gauss-prop} we have
\begin{equation}\label{u/r-int}
  \int_0^T \|u_r(t)/r\|_{L^\infty(\Omega)}\dd t \,\le\, C_{14}(M) M\,,
\end{equation}
where $C_{14}$ is a positive constant depending only on $M = \Gamma/\nu$.
\end{lem}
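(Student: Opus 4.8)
The plan is to exploit the fact that, when $\mu = \Gamma\,\delta_{(\bar r,\bar z)}$ is a single Dirac mass, the choice $\rho = \bar r/2$ makes the support $\{(\bar r,\bar z)\}$ satisfy \emph{simultaneously} the hypotheses of Proposition~\ref{L1prop-near} and Proposition~\ref{L1prop-far}. Indeed, since $2\rho = \bar r$, the point $(\bar r,\bar z)$ lies both in $[2\rho,\infty)\times\R$ and in $(0,2\rho]\times\R$, and $\mu$ is positive. Consequently the two complementary corollaries apply at once, giving for all $t \in (0,T)$:
\begin{gather*}
  \Bigl\|\frac{u_r(t)}{r}\Bigr\|_{L^\infty(\Omega)} \,\le\,
  \frac{C_{10}(M)M}{t}\Bigl(\frac{\nu t}{\rho^2}\Bigr)^{1/3}\,, \\
  \Bigl\|\frac{u_r(t)}{r}\Bigr\|_{L^\infty(\Omega)} \,\le\,
  \frac{C_{13}(M)M}{t}\Bigl(\frac{\rho^2}{\nu t}\Bigr)^{1/3}\,.
\end{gather*}
The first bound decays like $t^{-2/3}$ and is integrable near $t = 0$ but not at infinity, whereas the second decays like $t^{-4/3}$ and is integrable at infinity but not near the origin. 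The key point is that together they cover the whole time axis, each being sharp in the regime where the other fails.

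The second step is to split the integral at the natural crossover time $t_* = \rho^2/\nu = \bar r^2/(4\nu)$, at which both dimensionless factors $\nu t/\rho^2$ and $\rho^2/(\nu t)$ equal one. On $(0,t_*)$ I would use the near-axis estimate from Corollary~\ref{nearaxis-cor}, and on $(t_*,T)$ the far-axis estimate from Corollary~\ref{faraxis-cor2}, extending the latter integral to $+\infty$ to obtain a bound independent of $T$ (if $T \le t_*$ only the first piece is needed). Using $\int_0^{t_*} t^{-2/3}\dd t = 3\,t_*^{1/3}$ and $\int_{t_*}^\infty t^{-4/3}\dd t = 3\,t_*^{-1/3}$ together with $t_* = \rho^2/\nu$, the powers of $\rho$ and $\nu$ cancel exactly, which is precisely why the final bound is scale invariant:
\begin{gather*}
  \int_0^{t_*} \frac{C_{10}(M)M}{t}\Bigl(\frac{\nu t}{\rho^2}\Bigr)^{1/3}\dd t
  \,=\, 3\,C_{10}(M)\,M\,, \\
  \int_{t_*}^{\infty} \frac{C_{13}(M)M}{t}\Bigl(\frac{\rho^2}{\nu t}\Bigr)^{1/3}\dd t
  \,=\, 3\,C_{13}(M)\,M\,.
\end{gather*}
Adding the two contributions yields \eqref{u/r-int} with the explicit constant $C_{14}(M) = 3\bigl(C_{10}(M) + C_{13}(M)\bigr)$.

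There is essentially no hard part here: once one observes that a single Dirac mass is, with the choice $\rho = \bar r/2$, at the same time ``supported away from the axis'' and ``bounded in the radial direction'', the lemma is a short consequence of Corollaries~\ref{nearaxis-cor} and~\ref{faraxis-cor2} combined with two elementary time integrals. The only point requiring any care is the bookkeeping of the powers of $\rho$ and $\nu$ at the crossover, which cancel so that the constant depends only on $M = \Gamma/\nu$ and not on $\Gamma$, $\nu$, $\bar r$ separately. This integrability of $\|u_r(t)/r\|_{L^\infty(\Omega)}$ in time is exactly the ingredient needed to control the zero order term $u_r\omega_\theta/r$ in \eqref{omeq} and thereby run the Aronson-type argument behind the Gaussian bound \eqref{Gauss-est}.
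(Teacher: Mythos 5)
Your proof is correct and is essentially the paper's own argument: the paper likewise takes $\rho = \bar r/2$ so that the Dirac mass satisfies the hypotheses of Propositions~\ref{L1prop-near} and~\ref{L1prop-far} simultaneously, and then splits the time integral at the crossover scale (it uses $T_* = \bar r^2/\nu = 4\rho^2/\nu$ rather than your $t_* = \rho^2/\nu$, a difference that only affects the constant), applying Corollary~\ref{nearaxis-cor} on the early interval and Corollary~\ref{faraxis-cor2} on the late one. Your explicit evaluation of the two integrals, yielding $C_{14}(M) = 3\bigl(C_{10}(M)+C_{13}(M)\bigr)$, merely makes quantitative what the paper declares ``immediate''.
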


\begin{proof}
Let $T_* = \bar r^2/\nu = 4\rho^2/\nu$. Using estimate~\eqref{u/r-near}
for $t \in (0,T_*)$ and, if necessary, estimate~\eqref{u/r-far} for
$t \in (T_*,T)$, we immediately obtain~\eqref{u/r-int}.
\end{proof}

To derive estimate~\eqref{Gauss-est} it is convenient to abandon the
cylindrical coordinates and to return for a moment to the vector
valued vorticity $\omega(x,t) = \omega_\theta(r,z,t) e_\theta$,
which is considered as a function of $x = (r\cos\theta,r\sin\theta,z)
\in \R^3$ and $t \in (0,T)$. The evolution equation~\eqref{omeq}
is equivalent to
\begin{equation}\label{om-evol}
  \partial_t \omega + (U\cdot\nabla)\omega - V\omega \,=\,
  \nu \Delta \omega\,, \qquad x \in \R^3\,, \qquad t \in (0,T)\,,
\end{equation}
where $U = u_r e_r + u_z e_z$ is the velocity field associated with
$\omega$ via the three-dimensional Biot-Savart law, and $V = u_r/r$.
Since the pioneering work of Aronson \cite{Ar}, which relied itself on
previous results by Nash, De Giorgi, and Moser, it is well known that
solutions of advection-diffusion equations such as~\eqref{om-evol} can
be represented in terms of a (uniquely defined) fundamental solution
$\Phi$, which is H\"older continuous in space and time and satisfies
Gaussian upper and lower bounds.  In our problem we only have limited
information on the advection field $U$ and the potential $V$, and we
need an upper bound on the fundamental solution with explicit
dependence on the data $U$, $V$, and $\nu$. For that reason, we state
here a particular case of Aronson's estimates which is tailored to our
purposes.

\begin{prop}\label{fundsol-prop}
Assume that $U : \R^n \times (0,T) \to \R^n$ and $V : \R^n \times (0,T)
\to \R^n$ are continuous functions such that $\div U(\cdot,t) = 0$
for all $t \in (0,T)$ and
\begin{equation}\label{UVbounds}
  \sup_{0 < t < T} \Bigl(\frac{t}{\nu}\Bigr)^{1/2}
  \|U(\cdot,t)\|_{L^\infty(\R^n)} \,=\, K_1 \,<\, \infty\,, \qquad
  \int_0^T \|V(\cdot,t)\|_{L^\infty(\R^n)}\dd t \,=\, K_2 \,<\, \infty\,.
\end{equation}
Then the (regular) solutions of the advection-diffusion equation
\begin{equation}\label{f-evol}
  \partial_t f + (U\cdot\nabla)f - Vf \,=\,
  \nu \Delta f\,, \qquad x \in \R^n\,, \qquad t \in (0,T)\,,
\end{equation}
can be represented in the following way:
\[
  f(x,t) \,=\, \int_{\R^n} \Phi_{U,V,\nu}(x,t;y,s) f(y,s)\dd y\,,
  \qquad x \in \R^n\,, \qquad 0 < s < t < T\,,
\]
where the fundamental solution $\Phi_{U,V,\nu}(x,t;y,s)$ satisfies,
for $x,y \in \R^n$ and $0 < s < t < T$,
\begin{equation}\label{fund-bound}
  0 \,<\, \Phi_{U,V,\nu}(x,t;y,s) \,\le\, \frac{C_n}{(\nu(t{-}s))^{n/2}}
  \,\exp\Bigl(-\frac{|x-y|^2}{4\nu(t{-}s)} + K_1 \frac{|x-y|}{
  \sqrt{\nu(t{-}s)}} + K_2\Bigr)\,.
\end{equation}
Here the constant $C_n$ depends only on the space dimension $n$.
\end{prop}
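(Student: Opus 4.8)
The plan is to reduce to unit viscosity, strip off the potential $V$ by a gauge transformation, and then prove the drift-corrected Gaussian upper bound for the potential-free equation by Davies' exponential-weight method. The whole point is that the \emph{divergence-free} structure of $U$ neutralizes the critical singularity $\|U(\cdot,t)\|_{L^\infty}\lesssim t^{-1/2}$, so that all $L^p$ energy identities look exactly like those of the heat equation.

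\emph{Reductions.} The parabolic scaling $\xi = x/\sqrt{\nu}$, which replaces $U$ by $U(\sqrt{\nu}\,\cdot\,,\,\cdot)/\sqrt{\nu}$ and $V$ by $V(\sqrt{\nu}\,\cdot\,,\,\cdot)$, leaves both constants $K_1$ and $K_2$ in \eqref{UVbounds} invariant and multiplies the kernel by $\nu^{n/2}$; hence it suffices to treat $\nu = 1$. To remove $V$, fix $(y,s)$ and set $p(x,t) = \Phi_{U,V,1}(x,t;y,s)$, which solves \eqref{f-evol} with initial data $\delta_y$ and is positive by the strong maximum principle. Writing $\mathcal{V}(t) = \int_s^t \|V(\cdot,\tau)\|_{L^\infty}\dd\tau$ and $q = p\,e^{-\mathcal{V}(t)}$, a direct computation gives $\partial_t q + U\cdot\nabla q - \Delta q = (V - \|V(\cdot,t)\|_{L^\infty})\,q \le 0$, so $q$ is a nonnegative subsolution of the potential-free equation with the same initial data. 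Comparison with the potential-free fundamental solution $\Phi_0 := \Phi_{U,0,1}$ yields $p \le e^{\mathcal{V}(t)}\Phi_0 \le e^{K_2}\Phi_0$, and it remains to prove \eqref{fund-bound} for $\Phi_0$ with $K_2 = 0$.

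\emph{On-diagonal bound and the moving weight.} Since $\div U = 0$, multiplying $\partial_t f + U\cdot\nabla f = \Delta f$ by $p f^{p-1}$ (for $f \ge 0$) and integrating gives $\tfrac{\D}{\D t}\|f\|_{L^p}^p = -p(p-1)\int f^{p-2}|\nabla f|^2 \le 0$, with \emph{no} contribution from $U$ and, in particular, no sensitivity to its $\tau^{-1/2}$ blow-up at $\tau = s$. Thus $L^1$ is conserved, Nash's inequality gives $L^1\to L^2$ smoothing, duality gives $L^2\to L^\infty$, and composing yields $\Phi_0(x,t;y,s) \le C_n (t-s)^{-n/2}$ with $C_n$ depending only on $n$. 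For the off-diagonal decay, fix $\alpha\in\R^n$ and set $g = e^{\alpha\cdot x}f$; then $\partial_t g + (U+2\alpha)\cdot\nabla g = \Delta g + (|\alpha|^2 + \alpha\cdot U)\,g$, whose drift is still divergence-free and whose potential is bounded by $|\alpha|^2 + |\alpha|\,K_1\,t^{-1/2}$. Repeating the energy computation, the potential enters only through the gauge factor $\exp\bigl(\int_s^t(|\alpha|^2 + |\alpha|K_1\tau^{-1/2})\dd\tau\bigr) \le \exp\bigl(|\alpha|^2(t-s) + 2|\alpha|K_1\sqrt{t-s}\bigr)$, using $\int_s^t\tau^{-1/2}\dd\tau = 2(\sqrt{t}-\sqrt{s}) \le 2\sqrt{t-s}$. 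Hence the $L^1\to L^\infty$ bound for the $g$-propagator reads, at the level of kernels,
\[
  e^{\alpha\cdot(x-y)}\,\Phi_0(x,t;y,s) \,\le\, C_n\,(t-s)^{-n/2}
  \exp\bigl(|\alpha|^2(t-s) + 2|\alpha|K_1\sqrt{t-s}\bigr)\,.
\]
Choosing $\alpha = (x-y)/(2(t-s))$ makes $-\alpha\cdot(x-y) + |\alpha|^2(t-s) = -|x-y|^2/(4(t-s))$ and $2|\alpha|K_1\sqrt{t-s} = K_1|x-y|/\sqrt{t-s}$, which is exactly the exponent in \eqref{fund-bound}. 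Reinstating $\nu$ and the factor $e^{K_2}$ completes the bound.

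\emph{Main obstacle.} Everything outside the energy identities is bookkeeping, and the genuinely delicate point is making the Nash/Moser smoothing rigorous for the \emph{nonautonomous} equation whose drift diverges like $\tau^{-1/2}$ as $\tau\downarrow s$. I would handle this by first proving the estimate with $s$ replaced by $s' > s$ (where $U$ is bounded and the coefficients are smooth), with constants uniform in $s'$ because $U$ never appears in the $L^p$ identities, and then passing to the limit $s'\to s$; alternatively one may simply invoke Aronson's construction of the fundamental solution and track the dependence of the constants. In either case the critical singularity of $U$ is benign precisely because $\div U = 0$ removes $U$ from the dissipation estimates, so that $C_n$ depends on $n$ alone.
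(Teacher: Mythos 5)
Your proposal is correct and follows essentially the same route as the paper's Section~\ref{sec53}: the Fabes--Stroock exponential-tilting argument ($g = e^{\pm\alpha\cdot x}f$), with the divergence-free structure of $U$ eliminating the drift from the $L^p$ energy identities, Nash's inequality for $L^1\!\to\!L^2$ smoothing, duality for $L^2\!\to\!L^\infty$, the bound $\int_s^t \tau^{-1/2}\dd\tau \le 2\sqrt{t-s}$ to control the gauge factor, and the optimizing choice $\alpha = (x-y)/(2(t-s))$. The only (cosmetic) differences are that you strip off $V$ at the outset by a subsolution comparison rather than carrying it through the Gronwall factors as the paper does, and that you treat general $s$ directly where the paper reduces to $s=0$; both proofs likewise lean on Aronson's construction for existence and on smooth-coefficient approximation for rigor.
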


For completeness, we give a short proof of Proposition~\ref{fundsol-prop}
in Section~\ref{sec53} below, but our purpose here is to apply it to the
vorticity equation~\eqref{om-evol}, for which $n = 3$.  In view of
Corollary~\ref{apriori-cor} and Lemma~\ref{u/r-lem}, both assumptions
in~\eqref{UVbounds} are satisfied, and the constants $K_1$, $K_2$
depend only on $M = \Gamma/\nu$. Solutions of~\eqref{om-evol} can thus
be represented in the following way:
\[
  \omega(x,t) \,=\, \int_{\R^3} \Phi(x,t;y,s) \omega(y,s)\dd y\,,
  \qquad x \in \R^3\,, \qquad 0 < s < t < T\,,
\]
and the fundamental solution $\Phi$ satisfies~\eqref{fund-bound}
with $n = 3$. As $\omega(x,t) = \omega_\theta(r,z,t) e_\theta$,
we deduce that the axisymmetric vorticity $\omega_\theta$
satisfies
\begin{equation}\label{omega-rep}
  \omega_\theta(r,z,t) \,=\, \int_\Omega \tilde\Phi(r,z,t;r',z',s)
  \omega_\theta(r',z',s )\dd r'\dd z'\,,
\end{equation}
for $(r,z) \in \Omega$ and $ 0 < s < t < T$, where
\begin{equation}\label{Phitilde-def}
  \tilde\Phi(r,z,t;r',z',s) \,=\, \int_{-\pi}^\pi
  \Phi([r,0,z],t;[r'\cos\theta,r'\sin\theta,z'],s)
  r' \cos\theta \dd\theta\,.
\end{equation}

\begin{lem}\label{Phitilde-lem}
For any $\eta \in (0,1)$ there exists a positive constant
$K_\eta(M)$, depending only on $\eta$ and $M$, such that
the fundamental solution $\tilde\Phi$ defined in~\eqref{Phitilde-def}
satisfies
\begin{equation}\label{Phitilde-est}
  0 \,<\, \tilde\Phi(r,z,t;r',z',s) \,\le\,
  \frac{K_\eta(M)}{\nu (t{-}s)}\,\frac{r'^{1/2}}{r^{1/2}}
  \,\tilde H\biggl(\frac{\nu (t{-}s)}{(1{-}\eta)rr'}\biggr)
  \,e^{-\frac{1-\eta}{4\nu (t-s)}\bigl((r-r')^2 +
  (z-z')^2\bigr)}\,,
\end{equation}
for $(r,z), (r',z') \in \Omega$ and $0 < s < t < T$,
where
\begin{equation}\label{Hdef}
  \tilde H(\tau) \,=\, \frac{1}{\sqrt{\pi \tau}} \int_{-\pi/4}^{\pi/4}
  e^{-\frac{\sin^2\phi}{\tau}} \cos(2\phi)\dd \phi\,,
  \qquad \tau > 0\,.
\end{equation}
\end{lem}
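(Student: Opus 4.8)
The plan is to feed the three-dimensional Aronson bound~\eqref{fund-bound} into the defining formula~\eqref{Phitilde-def} and carry out the $\theta$--integration explicitly. First I would record that here $n=3$ and that the constants $K_1,K_2$ of Proposition~\ref{fundsol-prop} are finite and depend only on $M$: indeed $K_1\le C_7(M)M$ by Corollary~\ref{apriori-cor}, while $K_2\le C_{14}(M)M$ by Lemma~\ref{u/r-lem}. Next I would compute the elementary geometric identity for the two points $x=[r,0,z]$ and $y=[r'\cos\theta,r'\sin\theta,z']$, namely
\[
  |x-y|^2 \,=\, (r-r')^2 + (z-z')^2 + 4rr'\sin^2(\theta/2)\,,
\]
which is the crucial observation separating the ``in-plane'' distance $D:=(r-r')^2+(z-z')^2$ from the angular contribution.

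The second step is to tame the linear term $K_1|x-y|/\sqrt{\nu(t-s)}$ in the exponent of~\eqref{fund-bound}. Writing $\tau=\nu(t-s)$ and $d=|x-y|$, Young's inequality gives $-d^2/(4\tau)+K_1 d/\sqrt{\tau}\le -\tfrac{1-\eta}{4}\,d^2/\tau + K_1^2/\eta$, so that
\[
  \Phi(x,t;y,s) \,\le\, \frac{C\,e^{K_1^2/\eta+K_2}}{\tau^{3/2}}\,
  \exp\!\Bigl(-\frac{1-\eta}{4\tau}|x-y|^2\Bigr)\,,
\]
where $C$ is the universal constant of~\eqref{fund-bound} with $n=3$. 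This is precisely where the loss of $\eta$ in~\eqref{Phitilde-est} originates, and the prefactor $e^{K_1^2/\eta+K_2}$ already exhibits the announced dependence on $\eta$ and $M$ only. Inserting the geometric identity factors the Gaussian as $e^{-\frac{1-\eta}{4\tau}D}$ times $e^{-(1-\eta)rr'\sin^2(\theta/2)/\tau}$.

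The third step is the angular integration. Since $\Phi>0$ by Proposition~\ref{fundsol-prop}, the contribution of the range where $\cos\theta<0$ (that is, $|\theta|>\pi/2$) is nonpositive and may simply be discarded, yielding an upper bound in which $\theta$ runs only over $(-\pi/2,\pi/2)$, where $\cos\theta>0$ and the pointwise bound above applies. The substitution $\theta=2\phi$ turns the remaining integral into $2\int_{-\pi/4}^{\pi/4} e^{-(1-\eta)rr'\sin^2\phi/\tau}\cos(2\phi)\dd\phi$, which by the very definition~\eqref{Hdef} equals $2\sqrt{\pi\tau'}\,\tilde H(\tau')$ with $\tau'=\tau/((1-\eta)rr')$. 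Collecting prefactors, $r'\cdot\tau^{-3/2}\cdot\sqrt{\tau/(rr')}=\tau^{-1}(r'/r)^{1/2}$, which reproduces exactly the factor $\tfrac{1}{\nu(t-s)}(r'/r)^{1/2}\tilde H(\cdots)$ of~\eqref{Phitilde-est}, with $K_\eta(M)=2\sqrt\pi\,C\,e^{K_1^2/\eta+K_2}/\sqrt{1-\eta}$.

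Finally, the strict lower bound $0<\tilde\Phi$ does not follow from~\eqref{Phitilde-def} directly, because the sign of $\cos\theta$ obstructs any naive pointwise argument; I would obtain it instead by observing that $\tilde\Phi$ is the integral kernel of the solution operator of the scalar linear equation~\eqref{omeq} with $u$ regarded as a given coefficient. This operator is positivity preserving by the maximum principle (compare Corollary~\ref{adj-cor2}) and strictly positivity improving by the strong maximum principle, whence $\tilde\Phi>0$. The only genuinely delicate points should be the sign bookkeeping for $\cos\theta$ in the angular integral (verifying that discarding $|\theta|>\pi/2$ yields an upper bound, and that the surviving integral is exactly $\tilde H$) together with this positivity statement; everything else is the bounded computation sketched above.
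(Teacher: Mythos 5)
Your proposal is correct and follows essentially the same route as the paper's proof: the Young-inequality absorption of the linear exponent term into a $(1-\eta)$-weakened Gaussian, the restriction of the $\theta$-integral to $[-\pi/2,\pi/2]$ justified by the positivity of $\Phi$, the identity $|x-y|^2=(r-r')^2+(z-z')^2+4rr'\sin^2(\theta/2)$, the substitution $\theta=2\phi$ producing $\tilde H$, and the strong maximum principle for the positivity of $\tilde\Phi$. Your bookkeeping of the constant $K_\eta(M)=2\sqrt{\pi}\,C\,e^{K_1^2/\eta+K_2}/\sqrt{1-\eta}$ and of the prefactor $\tau^{-1}(r'/r)^{1/2}$ is accurate, matching the paper's (less explicit) treatment.
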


\begin{proof}
The positivity of the fundamental solution $\tilde\Phi$ of equation
\eqref{omeq} is a consequence of the strong maximum principle. To
obtain the upper bound~\eqref{Phitilde-est}, we first restrict the
integral in~\eqref{Phitilde-def} to the subdomain $[-\pi/2,\pi/2]$
where $\cos\theta \ge 0$, and then use estimate~\eqref{fund-bound}
with $n = 3$.  For any $\eta \in (0,1)$, it follows from
\eqref{fund-bound} and Young's inequality that
\[
  \Phi(x,t;y,s) \,\le\, \frac{C}{(\nu(t{-}s))^{3/2}}
  \,e^{-(1{-}\eta)\frac{|x-y|^2}{4\nu(t{-}s)} +
  \frac{K_1^2}{\eta} + K_2} \,=\, \frac{K_\eta(M)}{(\nu(t{-}s))^{3/2}}
  \,e^{-(1{-}\eta)\frac{|x-y|^2}{4\nu(t{-}s)}}\,.
\]
Taking $x = (r,0,z)$ and $y = (r'\cos\theta,r'\sin\theta,z')$,
we observe that
\[
  |x-y|^2 \,=\, |r-r'|^2 + |z-z'|^2 + 4rr' \sin^2(\theta/2)\,.
\]
Thus we deduce from~\eqref{Phitilde-def} that
\begin{align*}
  \tilde\Phi(r,z,t;r',z',s) \,&\le\,  \int_{-\pi/2}^{\pi/2}
  \Phi([r,0,z],t;[r'\cos\theta,r'\sin\theta,z'],s)
  r' \cos\theta \dd\theta \\
  \,&\le\, \frac{K_\eta(M)}{(\nu(t{-}s))^{3/2}}
   \,e^{-\frac{1-\eta}{4\nu (t-s)}\bigl((r-r')^2 +
  (z-z')^2\bigr)}
  \int_{-\pi/2}^{\pi/2} e^{-\frac{(1{-}\eta)rr'}{\nu(t{-}s)}
  \sin^2(\theta/2)}r' \cos\theta \dd\theta\,.
\end{align*}
Setting $\theta = 2\phi$ and using definition~\eqref{Hdef},
we arrive at~\eqref{Phitilde-est} with a modified constant
$K_\eta(M)$.
\end{proof}

\begin{rem}\label{Hrem}
The function $\tilde H$ in Lemma~\ref{Phitilde-lem} is not the same as
the function $H$ defined in \cite[Section~3]{GS}. One can show that
$\tilde H : (0,\infty) \to \R$ is decreasing with $\tilde H(\tau) \to
1$ as $\tau \to 0$ and $\tilde H(\tau) \sim 1/\sqrt{\pi \tau}$ as
$\tau \to \infty$. Moreover $\tilde H(\tau) \le 1/\sqrt{\pi \tau}$ for
all $\tau > 0$.
\end{rem}

\begin{proof}[\bf Proof of Proposition~\ref{Gauss-prop}.]
Fix $(r,z) \in \Omega$ and $t \in (0,T)$. Using the representation
\eqref{omega-rep} and the bound~\eqref{Phitilde-est}, we obtain
for all $s \in (0,t)$:
\[
  \omega_\theta(r,z,t) \,\le\, \frac{K_\eta(M)}{\nu (t{-}s)}
  \int_\Omega \frac{r'^{1/2}}{r^{1/2}}
  \,\tilde H\biggl(\frac{\nu (t{-}s)}{(1{-}\eta)rr'}\biggr)
  \,e^{-\frac{1-\eta}{4\nu (t-s)}\bigl((r-r')^2 +
  (z-z')^2\bigr)} \omega_\theta(r',z',s)\dd r'\dd z'\,.
\]
If $r' \le 2r$ in the right-hand side, we bound the function
$\tilde H$ by $1$. If $r' \ge 2r$ we use the fact that $\tilde H(\tau) \le
1/\sqrt{\pi \tau}$, so that
\[
  \frac{r'^{1/2}}{r^{1/2}}\,\tilde H\biggl(\frac{\nu(t{-}s)}{(1{-}\eta)r
  r'}\biggr) \,\le\, \frac{r'}{\sqrt{\pi}}\,
  \Bigl(\frac{1-\eta}{\nu (t{-}s)}\Bigl)^{1/2} \,\le\,
  C_\eta\,e^{\frac{\eta(1-\eta)}{4\nu (t-s)}(r-r')^2}\,,
\]
because $r'^2 \le 4(r-r')^2$. We thus obtain the simpler
estimate
\begin{align*}
  \omega_\theta(r,z,t) \,&\le\, \frac{K_\eta(M)}{\nu (t{-}s)}
  \int_\Omega e^{-\frac{(1-\eta)^2}{4\nu (t-s)}\bigl((r-r')^2 +
  (z-z')^2\bigr)} \omega_\theta(r',z',s)\dd r'\dd z' \\
  \,&\le\, \frac{K_\eta(M)}{\nu (t{-}s)} \int_\Omega
  e^{-\frac{(1-\eta)^2}{4\nu t}\bigl((r-r')^2 +
  (z-z')^2\bigr)} \omega_\theta(r',z',s)\dd r'\dd z'\,,
\end{align*}
with possibly a different constant $K_\eta(M)$. We now take the limit
$s \to 0$ and use the assumption that $\omega_\theta(\cdot,\cdot,s)
\dd r'\dd z' \weakto \Gamma \delta_{(\bar r, \bar z)}$, together with
Remark~\ref{tight-rem}. We thus obtain the upper bound in~\eqref{Gauss-est},
with a slightly different value of $\eta$. Finally, as was already 
observed, the positivity of $\omega_\theta$ is a consequence of the strong 
maximum principle. 
\end{proof}

%%%%%%%%%%%%%%%%%%%%%%%%%%%%%%%%%%%%%%%%%%%%%

\section{Self-similar variables and energy estimates}\label{sec4}

This section is is devoted to the actual proof of
Theorem~\ref{main}. We thus assume that $\omega_\theta \in C^0((0,T),
L^1(\Omega) \cap L^\infty(\Omega))$ is a mild solution of~\eqref{omeq}
which is uniformly bounded in $L^1(\Omega)$ and converges weakly to
$\Gamma \delta_{(\bar r, \bar z)}$ as $t \to 0$, for some $\Gamma > 0$
and some $(\bar r, \bar z) \in \Omega$. If $M$ is defined by
\eqref{Mdef}, we recall that $M = \Gamma/\nu$. The Gaussian estimate
in Proposition~\ref{Gauss-prop} indicates that, for short times, the
axisymmetric vorticity $\omega_\theta(r,z,t)$ concentrates in a
self-similar way around the initial position $(\bar r, \bar z)$ of the
vortex filament. A natural idea is thus to introduce self-similar
variables, in order to analyze more accurately the short-time behavior
of the solution.

\subsection{Definitions and a priori estimates}\label{sec41}

Motivated by~\eqref{Gauss-est}, we set
\begin{equation}\label{f-def}
  \omega_\theta(r,z,t) \,=\, \frac{\Gamma}{\nu t}\,f\Bigl(\frac{r-\bar r}
  {\sqrt{\nu t}}\,,\,\frac{z-\bar z}{\sqrt{\nu t}}\,,\,t\Bigr)\,, \qquad
  (r,z) \in \Omega\,,\quad t \in (0,T)\,.
\end{equation}
We also introduce the important notation
\begin{equation}\label{RZeps-def}
  \epsilon \,=\, \frac{\sqrt{\nu t}}{\bar r}\,, \qquad
  \gamma \,=\, \frac{\Gamma}{\nu}\,, \qquad
  R \,=\, \frac{r-\bar r}{\sqrt{\nu t}}\,, \qquad
  Z \,=\, \frac{z-\bar z}{\sqrt{\nu t}}\,.
\end{equation}
The dimensionless quantity $\epsilon$ is the ratio of the typical core
thickness of the vortex ring at time $t$ to the radius of the initial
vortex filament.  We are interested in the regime where $\epsilon$ is
small, and most of our analysis actually deals with the limit as
$\epsilon \to 0$. The ratio $\gamma$ of the vortex strength $\Gamma$
to the viscosity $\nu$ is sometimes called the ``circulation Reynolds
number'' in the physical literature. It is also dimensionless, and
coincides in the present case with the quantity $M$ defined in
\eqref{Mdef}, because we are considering positive solutions of
\eqref{omeq}. Finally, the dimensionless variables $(R,Z)$ are new
coordinates centered at the position of the vortex filament, where
distances are measured in units of the core thickness $\sqrt{\nu t}$.
Note that the domain constraint $r > 0$ translates into $1 + \epsilon
R > 0$, which means that the rescaled vorticity $f(R,Z,t)$ given by
\eqref{f-def} is actually defined in the time-dependent domain
$\Omega_\epsilon = \{(R,Z) \in \R^2\,|\, 1+\epsilon R > 0\}$, which
converges to $\R^2$ as $\epsilon \to 0$. However, since the function
$f(R,Z,t)$ satisfies the homogeneous Dirichlet condition at the
boundary $R = -1/\epsilon$, we can extended it by zero outside that
domain and thereby identify it with a function $\bar f(R,Z,t)$ which
is now defined on the whole plane $\R^2$, for any $t \in (0,T)$.

In view of~\eqref{Gauss-est}, the rescaled vorticity $f(R,Z,t)$
satisfies, for any $\eta \in (0,1)$,
\begin{equation}\label{Gaussf}
  0 \,<\, f(R,Z,t) \,\le\, K_\eta(M) \,e^{-\frac{1-\eta}{4}
  (R^2 + Z^2)}\,,
\end{equation}
for all $(R,Z) \in \Omega_\epsilon$ and all $t \in (0,T)$. Moreover,
it follows from~\eqref{derom-apriori} that the spatial
derivatives of $f$ are uniformly bounded:
\begin{equation}\label{unif-derf}
  |\nabla f(R,Z,t)| \,\le\, C_8(M)\,.
\end{equation}
Finally, using~\eqref{Mdef} and~\eqref{f-def}, we obtain
\begin{equation}\label{fmass}
  \int_{\Omega_\epsilon} f(R,Z,t) \dd R \dd Z \,=\,
  \frac{1}{\Gamma}\int_\Omega \omega_\theta(r,z,t)\dd r\dd z
  \,\xrightarrow[t \to 0]{}\, 1\,.
\end{equation}

It is also useful to express the velocity field $u$ associated
with $\omega_\theta$ in self-similar variables. The correct ansatz
is:
\begin{equation}\label{U-def}
  u(r,z,t) \,=\, \frac{\Gamma}{\sqrt{\nu t}}\,U^\epsilon\Bigl(\frac{r-\bar r}
  {\sqrt{\nu t}}\,,\,\frac{z-\bar z}{\sqrt{\nu t}}\,,\,t\Bigr)\,, \qquad
  (r,z) \in \Omega\,,\quad t \in (0,T)\,,
\end{equation}
where $U^\epsilon = U^\epsilon_r e_r + U^\epsilon_z e_z$ denotes the
rescaled velocity field. We use here the superscript $\epsilon$ to
keep in mind that, in the new variables, the Biot-Savart law depends
explicitly on time through the parameter $\epsilon = \sqrt{\nu t}/\bar r$.
Indeed, for any $t \in (0,T)$, the velocity $U^\epsilon$ satisfies the
elliptic system
\begin{equation}\label{U-BS}
  \partial_Z U^\epsilon_r - \partial_R U^\epsilon_z \,=\, f\,, \qquad
  \partial_R U^\epsilon_r + \frac{\epsilon U^\epsilon_r}{1+\epsilon R} +
  \partial_Z U^\epsilon_z \,=\, 0\,,
\end{equation}
in the domain $\Omega_\epsilon$, together with the boundary conditions
$U^\epsilon_r = \partial_R U^\epsilon_z = 0$ on $\partial\Omega_\epsilon$. In
view of~\eqref{u-apriori}, we have the following uniform a priori estimate
\begin{equation}\label{unif-Ueps}
  |U^\epsilon(R,Z,t)| \,\le\, C_7(M)\,, \qquad (R,Z) \in \Omega_\epsilon\,,
  \quad t \in (0,T)\,.
\end{equation}
In fact, estimate \eqref{unif-Ueps} can be improved as follows

\begin{lem}\label{decayU}
The rescaled velocity field defined in~\eqref{U-def} satisfies
\begin{equation}\label{decayU-est}
  (1 + |R| + |Z|)\,|U^\epsilon(R,Z,t)| \,\le\, C_{15}(M)\,, \qquad (R,Z)
  \in \Omega_\epsilon\,, \quad t \in (0,T)\,,
\end{equation}
where $C_{15}$ depends only on $M$.
\end{lem}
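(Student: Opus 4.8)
The plan is to return to the three-dimensional vector vorticity $\omega = \omega_\theta e_\theta$ and to exploit the elementary pointwise bound coming from the three-dimensional Biot--Savart law, namely $|u(x)| \le \frac{1}{4\pi}\int_{\R^3}\frac{|\omega(y)|}{|x-y|^2}\dd y$. Since $\omega_\theta > 0$ by \eqref{Gaussf}, we have $|\omega| = \omega_\theta$, and evaluating at $x = (r,0,z)$ with $y = (r'\cos\theta',r'\sin\theta',z')$ reduces the estimate to a two-dimensional integral over $\Omega$, once the angular variable $\theta'$ is integrated out. First I would perform this angular integration explicitly: using $|x-y|^2 = (r-r')^2+(z-z')^2 + 4rr'\sin^2(\theta'/2)$ together with the standard formula $\int_0^{\pi/2}(a+b\sin^2\phi)^{-1}\dd\phi = \frac{\pi}{2\sqrt{a(a+b)}}$, one finds
\[
  \int_{-\pi}^{\pi}\frac{r'\dd\theta'}{|x-y|^2} \,=\, \frac{2\pi r'}{\sqrt{(r-r')^2+(z-z')^2}\,\sqrt{(r+r')^2+(z-z')^2}}\,,
\]
so that $|u(r,0,z)|$ is bounded by $\tfrac12$ times the integral of $r'\omega_\theta(r',z')$ against this kernel.

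Next I would pass to the self-similar variables \eqref{U-def}, \eqref{RZeps-def}. Substituting $r = \bar r(1+\epsilon R)$, $r' = \bar r(1+\epsilon R')$ and $z-z' = \sqrt{\nu t}\,(Z-Z')$, all powers of $\Gamma$, $\bar r$ and $\epsilon$ cancel exactly, leaving
\[
  |U^\epsilon(R,Z)| \,\le\, \frac12\int_{\Omega_\epsilon}\frac{(1+\epsilon R')\,f(R',Z')}{|(R,Z)-(R',Z')|\,\sqrt{(2+\epsilon(R+R'))^2+\epsilon^2(Z-Z')^2}}\dd R'\dd Z'\,.
\]
The key observation is that on $\Omega_\epsilon$ one has $2+\epsilon(R+R') = (1+\epsilon R)+(1+\epsilon R') \ge 1+\epsilon R' \ge 0$, so the geometric factor $(1+\epsilon R')/\sqrt{(2+\epsilon(R+R'))^2+\epsilon^2(Z-Z')^2} \le 1$. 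This collapses the estimate to the clean, $\epsilon$-independent bound
\[
  |U^\epsilon(R,Z)| \,\le\, \frac12\int_{\Omega_\epsilon}\frac{f(R',Z')}{|(R,Z)-(R',Z')|}\dd R'\dd Z'\,,
\]
which is precisely the pointwise estimate for the \emph{two-dimensional} Biot--Savart velocity generated by $f$.

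Finally I would conclude from the Gaussian localization of $f$. Using \eqref{Gaussf} and splitting according to whether $|(R',Z')| \le \tfrac12|(R,Z)|$ or not, the first region contributes at most $\frac{C}{|(R,Z)|}\int f$ because there $|(R,Z)-(R',Z')| \ge \tfrac12|(R,Z)|$, while the second region is super-exponentially small by the Gaussian tail; near the origin the local integrability of $|(R,Z)-(R',Z')|^{-1}$ in $\R^2$ bounds the integral by a constant. Combining these gives $(1+|R|+|Z|)\,|U^\epsilon(R,Z)| \le C_{15}(M)$ with $C_{15}$ depending only on $K_\eta(M)$, hence only on $M$, as claimed.

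The computations are short, and the only genuinely delicate point is that the bound must be \emph{uniform in} $\epsilon$ (equivalently, in $t$): this is exactly what the inequality $1+\epsilon R' \le \sqrt{(2+\epsilon(R+R'))^2+\epsilon^2(Z-Z')^2}$ on $\Omega_\epsilon$ achieves, reducing the $\epsilon$-dependent axisymmetric kernel to the scale-invariant two-dimensional one. Everything else---the angular integration and the Gaussian tail estimate---is routine.
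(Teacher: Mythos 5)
Your proposal is correct, and its overall skeleton coincides with the paper's: both reduce the claim to the $\epsilon$-uniform pointwise bound \eqref{Uepsbound}, $|U^\epsilon(R,Z)| \le C\int_{\Omega_\epsilon} |(R,Z)-(R',Z')|^{-1} f(R',Z')\dd R'\dd Z'$, and then conclude from the Gaussian bound \eqref{Gaussf} by the standard splitting $|(R',Z')| \le \frac12|(R,Z)|$ versus its complement. The difference is how you arrive at \eqref{Uepsbound}: the paper simply quotes \cite[Proposition~2.3]{GS} for the estimate $|u(r,z)| \le C\int_\Omega ((r-r')^2+(z-z')^2)^{-1/2}|\omega_\theta(r',z')|\dd r'\dd z'$ and invokes scale invariance, whereas you re-derive it from scratch via the three-dimensional Biot--Savart law, the identity $|x-y|^2 = (r-r')^2+(z-z')^2+4rr'\sin^2(\theta'/2)$, and the exact angular integral $\int_0^{\pi/2}(a+b\sin^2\phi)^{-1}\dd\phi = \pi/(2\sqrt{a(a+b)})$; I checked these computations and they are correct, including the exact kernel $2\pi r'\bigl((r-r')^2+(z-z')^2\bigr)^{-1/2}\bigl((r+r')^2+(z-z')^2\bigr)^{-1/2}$ and the cancellation of all powers of $\Gamma$, $\bar r$, $\epsilon$ under the rescaling \eqref{U-def}, \eqref{RZeps-def}. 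Your key inequality $1+\epsilon R' \le (1+\epsilon R)+(1+\epsilon R') = 2+\epsilon(R+R')$ on $\Omega_\epsilon$, which discards the geometric factor uniformly in $\epsilon$, is exactly the mechanism hidden in the cited proposition (note the same angular-integration device appears in the paper's Lemma~\ref{Phitilde-lem} for the heat kernel). What your route buys is a self-contained argument and in fact a slightly stronger intermediate bound, since you retain the extra decay factor $\bigl((r+r')^2+(z-z')^2\bigr)^{-1/2}$ before relaxing it; what the paper's route buys is brevity, outsourcing both the kernel bound and the routine Gaussian-tail estimate (which you correctly spell out, including the near-singularity local integrability and the dependence of $C_{15}$ on $M$ alone after fixing $\eta$).
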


\begin{proof}
If $u$ is the velocity field associated with the vorticity
$\omega_\theta$ via the axisymmetric Biot-Savart law, it is shown in
\cite[Proposition~2.3]{GS} that
\[
  |u(r,z)| \,\le\, \int_\Omega \frac{C}{\sqrt{(r-r')^2 +
  (z-z')^2}}\,|\omega_\theta(r',z')|\dd r'\dd z'\,,
  \qquad (r,z) \in \Omega\,,
\]
where $C > 0$ is a universal constant. Using the change of variables
\eqref{f-def} and~\eqref{U-def}, we deduce that, for any $\epsilon > 0$,
\begin{equation}\label{Uepsbound}
  |U^\epsilon(R,Z)| \,\le\, \int_{\Omega_\epsilon} \frac{C}{\sqrt{(R-R')^2 +
  (Z-Z')^2}}\,|f(R',Z')|\dd R' \dd Z'\,, \qquad (R,Z) \in \Omega_\epsilon\,.
\end{equation}
In view of~\eqref{Uepsbound}, estimate~\eqref{decayU-est} follows
easily from the Gaussian bound~\eqref{Gaussf}.
\end{proof}

In Section~\ref{sec44} below we need accurate estimates on the
difference $U^\epsilon - U^0$, where $U^0$ denotes the velocity
field obtained from $f$ via the Biot-Savart law on $\R^2$. 
To prove such bounds, we need a rather explicit representation
for the solution of \eqref{U-BS}, which we now derive.

\subsection{The parametrized Biot-Savart law}\label{sec42}

We look for a solution of \eqref{U-BS} in the form
\begin{equation}\label{Ueps}
  U_r^\epsilon \,=\, -\frac{\partial_Z \phi^\epsilon}{1+\epsilon R}\,,
  \qquad
  U_z^\epsilon \,=\, \frac{\partial_R \phi^\epsilon}{1+\epsilon R}\,,
\end{equation}
where $\phi^\epsilon : \Omega_\epsilon \to \R$ is the axisymmetric
stream function, which satisfies the second-order elliptic
equation
\begin{equation}\label{phieps-eq}
  -\frac{\partial_R^2 \phi^\epsilon}{1+\epsilon R} +
  \frac{\epsilon \partial_R \phi^\epsilon}{(1+\epsilon R)^2}
  - \frac{\partial_Z^2 \phi^\epsilon}{1+\epsilon R} \,=\, f\,,
\end{equation}
in the domain $\Omega_\epsilon$, with both Dirichlet and Neumann
conditions on the boundary $\partial\Omega_\epsilon$. The solution
of \eqref{phieps-eq} can be computed as in \cite[Section~2]{FS}
and is found to be
\begin{equation}\label{phieps-rep}
  \phi^\epsilon(R,Z) \,=\, \frac{1}{2\pi}\int_{\Omega_\epsilon}
  \sqrt{(1{+}\epsilon R) (1{+}\epsilon R')} \,F\biggl(
  \epsilon^2\frac{(R{-}R')^2 + (Z{-}Z')^2}{(1{+}\epsilon R)
  (1{+}\epsilon R')}\biggr)f(R',Z')\dd R' \dd Z'\,,
\end{equation}
where $F : (0,\infty) \to (0,\infty)$ is defined by
\begin{equation}\label{Fexp}
  F(s) \,=\, \int_0^{\pi/2} \frac{1-2\sin^2\psi}{
  \sqrt{\sin^2\psi + s/4}}\dd\psi \,=\,
  \begin{cases} \log\frac{8}{\sqrt{s}} - 2 + \cO(s \log s)
  & \hbox{as }s \to 0\,,\\ \frac{\pi}{2s^{3/2}} + \cO(s^{-5/2})
  & \hbox{as }s \to \infty\,. \end{cases}
\end{equation}
Differentiating \eqref{phieps-rep} with respect to $R$ and $Z$,
and using \eqref{Ueps}, we obtain
\begin{align}\nonumber
  U^\epsilon_r(R,Z) \,&=\, \frac{1}{2\pi}\int_{\Omega_\epsilon}
  \sqrt{\frac{1{+}\epsilon R'}{1{+}\epsilon R}}
  \,\tilde F(\xi^2)\,\frac{Z-Z'}{(R{-}R')^2 + (Z{-}Z')^2}
  \,f(R',Z')\dd R' \dd Z'\,, \\ \label{Ueps-rep}
  U^\epsilon_z(R,Z) \,&=\, -\frac{1}{2\pi}\int_{\Omega_\epsilon}
  \sqrt{\frac{1{+}\epsilon R'}{1{+}\epsilon R}}
  \,\tilde F(\xi^2)\,\frac{R-R'}{(R{-}R')^2 + (Z{-}Z')^2}
  \,f(R',Z')\dd R' \dd Z' \\ \nonumber
  &\quad~ + \frac{\epsilon}{4\pi}\int_{\Omega_\epsilon}
  \frac{\sqrt{1{+}\epsilon R'}}{(1+\epsilon R)^{3/2}}
  \,\bigl(F(\xi^2) + \tilde F(\xi^2)\bigr) f(R',Z')\dd R' \dd Z'\,,
\end{align}
where $\xi^2$ is a shorthand notation for the quantity
\begin{equation}\label{xidef}
  \xi^2 \,=\, \epsilon^2\frac{(R{-}R')^2 + (Z{-}Z')^2}{(1{+}\epsilon R)
  (1{+}\epsilon R')}\,,
\end{equation}
and $\tilde F : (0,\infty) \to (0,\infty)$ is defined by
\begin{equation}\label{tildeFexp}
  \tilde F(s) \,=\, -2 s F'(s) \,=\, \begin{cases} 1 + \cO(s \log s)
  & \hbox{as }s \to 0\,,\\ \frac{3\pi}{2s^{3/2}} + \cO(s^{-5/2})
  & \hbox{as }s \to \infty\,. \end{cases}
\end{equation}
For simplicity we write $U^\epsilon = \BS^\epsilon[f]$ when \eqref{Ueps-rep}
holds.

When $\epsilon \to \infty$, the domain $\Omega_\epsilon$ shrinks to
the half-plane $\Omega$, and \eqref{Ueps-rep} coincides with the
axisymmetric Biot-Savart law, which is studied e.g. in
\cite[Section~2]{FS}. In contrast, as $\epsilon \to 0$, the domain
$\Omega_\epsilon$ expands to the full plane $\R^2$, and in this limit
\eqref{Ueps-rep} reduces to the usual two-dimensional Biot-Savart law:
\begin{equation}\label{BS2D}
  U^0(R,Z) \,=\, \begin{pmatrix} U^0_r(R,Z) \\
  U^0_z(R,Z)\end{pmatrix} \,=\, \frac{1}{2\pi} \int_{\R^2} \begin{pmatrix}
  Z-Z' \\ R'-R \end{pmatrix} \frac{f(R',Z')}{(R{-}R')^2 + (Z{-}Z')^2}
  \dd R'\dd Z'\,,
\end{equation}
which we denote $U^0 = \BS^0[f]$. Thus the $\epsilon$-dependent
Biot-Savart law defined by \eqref{U-BS} or \eqref{Ueps-rep} nicely
interpolates between the axisymmetric case and the two-dimensional
case.

We now compare the velocity fields $U^\epsilon$ and $U^0$ obtained
from the same vorticity distribution.

\begin{lem}\label{Udifflem}
Assume that $f$ vanishes outside $\Omega_\epsilon$. If $U^\epsilon =
\BS^\epsilon[f]$ and $U^0 = \BS^0[f]$, we have, for all $(R,Z) \in
\Omega_\epsilon$,
\begin{equation}\label{Udiffest}
  |U^\epsilon(R,Z) - U^0(R,Z)| \,\le\, \int_{\Omega_\epsilon} \frac{C\epsilon}{1
  + \epsilon R}\Bigl(1 + \logp\frac{1+\epsilon R}{\epsilon\rho}
  \Bigr) |f(R',Z')|\dd R' \dd Z'\,,
\end{equation}
where $\rho = \sqrt{(R{-}R')^2 + (Z{-}Z')^2}$ and $\logp(x) =
\max(\log(x),0)$.
\end{lem}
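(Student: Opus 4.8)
The plan is to write both velocity fields as integrals of $f$ against explicit kernels over $\Omega_\epsilon$ --- this is legitimate since $f$ vanishes outside $\Omega_\epsilon$, so the two-dimensional integral \eqref{BS2D} defining $U^0$ is effectively over $\Omega_\epsilon$ as well --- and then estimate the difference \emph{of the kernels} pointwise. Comparing \eqref{Ueps-rep} with \eqref{BS2D}, it is convenient to split $U^\epsilon - U^0$ into two pieces: the ``main'' contributions coming from the $\tilde F$-terms, which have an exact two-dimensional counterpart in $U^0$, and the ``extra'' term appearing only in the second line of $U^\epsilon_z$, which has no analogue in $U^0$. Throughout I abbreviate $a = 1+\epsilon R$ and $a' = 1+\epsilon R'$, so that $\xi^2 = \epsilon^2\rho^2/(aa')$ by \eqref{xidef}; I will repeatedly use the two elementary facts that $|a-a'| = \epsilon|R-R'| \le \epsilon\rho$ and that the geometric factors $|Z-Z'|/\rho^2$ and $|R-R'|/\rho^2$ are each bounded by $1/\rho$.

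For the main terms, the $r$-component and the first line of the $z$-component each contribute $\pm\frac{1}{2\pi}\int_{\Omega_\epsilon}\bigl(\sqrt{a'/a}\,\tilde F(\xi^2) - 1\bigr)\,K\,f\dd R'\dd Z'$, where $K$ is one of the two geometric factors. Hence it suffices to prove the pointwise bound $|\sqrt{a'/a}\,\tilde F(\xi^2) - 1| \le C\epsilon\rho/a$, after which dividing by $\rho$ yields the ``$1$'' part of \eqref{Udiffest}. I would prove this by distinguishing whether $\epsilon\rho \le a/2$ or not. If $\epsilon\rho \le a/2$ then $|a-a'|\le a/2$, so $a/2 \le a' \le 3a/2$ and $\xi \le 1$; writing $\sqrt{a'/a}\,\tilde F(\xi^2)-1 = (\sqrt{a'/a}-1)\tilde F(\xi^2) + (\tilde F(\xi^2)-1)$ and invoking $|\sqrt{a'/a}-1|\le C\epsilon\rho/a$ together with the small-argument expansion $\tilde F(s) = 1 + \cO(s\log s)$ from \eqref{tildeFexp} (and the boundedness of $x\mapsto x|\log x|$ near $0$) gives the claim. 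If instead $\epsilon\rho > a/2$, then $\logp\frac{a}{\epsilon\rho}$ is bounded and one only needs to bound $\sqrt{a'/a}\,\tilde F(\xi^2)$ by a constant, which is then $\le C\epsilon\rho/a$ since $\epsilon\rho/a > 1/2$; here either $a'\sim a$, so the weight is bounded, or $a'$ is not comparable to $a$, in which case $|a-a'|\le\epsilon\rho$ forces $\epsilon\rho\gtrsim\max(a,a')$ and the large-argument decay $\tilde F(s) = \cO(s^{-3/2})$ dominates the weight $\sqrt{a'/a}$.

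The $\logp$-term in \eqref{Udiffest} comes entirely from the extra term $\frac{\epsilon}{4\pi}\int_{\Omega_\epsilon}\frac{\sqrt{a'}}{a^{3/2}}\bigl(F(\xi^2)+\tilde F(\xi^2)\bigr)f\dd R'\dd Z'$. Factoring out $\epsilon/a$ and writing $\frac{\sqrt{a'}}{a^{3/2}} = \frac{1}{a}\sqrt{a'/a}$, it suffices to bound $\sqrt{a'/a}\,\bigl(|F(\xi^2)|+\tilde F(\xi^2)\bigr)$ by $C(1+\logp\frac{a}{\epsilon\rho})$. For $\xi\lesssim 1$ (which, as above, forces $a'\sim a$) the expansion \eqref{Fexp} gives $|F(\xi^2)| \le C + \logp\frac{1}{\xi}$, and since $\sqrt{a'/a}\sim1$ and $1/\xi = \sqrt{aa'}/(\epsilon\rho)\sim a/(\epsilon\rho)$ this is exactly $C(1+\logp\frac{a}{\epsilon\rho})$, the $\tilde F(\xi^2)$ piece being harmless as $\tilde F$ is bounded. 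For $\xi\gtrsim 1$ both $F(\xi^2)$ and $\tilde F(\xi^2)$ are $\cO(\xi^{-3})$, so the weight $\sqrt{a'/a}$ is again controlled via $\epsilon\rho\gtrsim\max(a,a')$, and the whole expression stays bounded.

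I expect the main obstacle to be the bookkeeping of the weight $\sqrt{a'/a} = \bigl((1+\epsilon R')/(1+\epsilon R)\bigr)^{1/2}$ in the regime where the source point $(R',Z')$ approaches the moving boundary $R'=-1/\epsilon$, so that $a'\to 0$, or where $a'\gg a$. In those regimes the naive kernel bounds degenerate, and the argument hinges on the single structural observation $|a-a'|\le\epsilon\rho$, which converts any failure of comparability between $a$ and $a'$ into a lower bound on $\epsilon\rho$, hence into strong decay of $F(\xi^2)$ and $\tilde F(\xi^2)$ through their large-argument asymptotics. Assembling the three contributions and bounding $|f|$ pointwise inside the integral then yields \eqref{Udiffest}.
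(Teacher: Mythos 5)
Your proposal is correct and follows essentially the same route as the paper: both subtract the two kernels pointwise and reduce the claim to the two bounds $\bigl|\sqrt{(1{+}\epsilon R')/(1{+}\epsilon R)}\,\tilde F(\xi^2)-1\bigr| \le C\epsilon\rho/(1{+}\epsilon R)$ for the $\tilde F$-terms and a $\log$-type bound for the extra term, exploiting the same structural fact $|{(1{+}\epsilon R)-(1{+}\epsilon R')}| \le \epsilon\rho$ together with the asymptotics \eqref{Fexp}, \eqref{tildeFexp}. The only (cosmetic) difference is internal bookkeeping: the paper avoids your case split on $\epsilon\rho$ versus $1+\epsilon R$ by using the global bound $|\tilde F(\xi^2)-1|\le C|\xi|$, for which the weight cancels exactly since $\sqrt{(1{+}\epsilon R')/(1{+}\epsilon R)}\,\xi = \epsilon\rho/(1{+}\epsilon R)$, and for the extra term it splits on $1{+}\epsilon R' \lessgtr 2(1{+}\epsilon R)$ with the weaker decay $F+\tilde F \le C\xi^{-1}$ where you split on $\xi \lessgtr 1$ with the full $\xi^{-3}$ decay.
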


\begin{proof}
Since $f$ is supported in $\Omega_\epsilon$ by assumption, the
integrals in \eqref{Ueps-rep}, \eqref{BS2D} are taken over
the same domain. Thus, all we need is to subtract \eqref{BS2D} from
\eqref{Ueps-rep} and to estimate the various terms in the difference,
using the following elementary bounds
\begin{align}\label{Udiff1}
  &\Bigl|\sqrt{\frac{1{+}\epsilon R'}{1{+}\epsilon R}}\,\tilde F(\xi^2) - 1
  \Bigr| \,\le\, \sqrt{\frac{1{+}\epsilon R'}{1{+}\epsilon R}}\Bigl|
  \tilde F(\xi^2) - 1\Bigr| + \Bigl|\sqrt{\frac{1{+}\epsilon R'}{1{+}
  \epsilon R}} - 1\Bigr| \,\le\, C \frac{\epsilon \rho}{1{+}\epsilon R}\,,
  \\[2mm] \label{Udiff2}
  &\frac{\epsilon \sqrt{1{+}\epsilon R'}}{(1{+}\epsilon R)^{3/2}}
  \Bigl|F(\xi^2) + \tilde F(\xi^2)\Bigr| \,\le\, C\frac{\epsilon}{1{+}
  \epsilon R} \Bigl(1 + \logp\frac{1{+}\epsilon R}{\epsilon \rho}\Bigr)\,.
\end{align}
Estimate \eqref{Udiff1} easily follows from the bound $|\tilde
F(\xi^2) - 1| \le C|\xi|$, which is a direct consequence of
\eqref{tildeFexp}. The proof of \eqref{Udiff2} requires a little more
work. In the region where $1+\epsilon R' \le 2(1+\epsilon R)$, we
obtain \eqref{Udiff2} using the facts that $\tilde F(\xi^2)$ is
bounded and $F(\xi^2) \le C(1+\logp \xi^{-1})$, see \eqref{Fexp}. When
$1+\epsilon R' \ge 2(1+\epsilon R)$, we observe that $2\epsilon\rho
\ge 2\epsilon(R'-R) \ge 1+\epsilon R'$, and using the bounds $F(\xi^2)
+ \tilde F(\xi^2) \le C \xi^{-1}$ we obtain \eqref{Udiff2} (without
the logarithmic term in that case).
\end{proof}

\subsection{Characterization of the $\alpha$-limit set}\label{sec43}

The evolution equation satisfied by the rescaled vorticity $f$ defined
in~\eqref{f-def} reads
\begin{equation}\label{feq}
  t \partial_t f + \gamma \partial_R(U^\epsilon_r f) + \gamma
  \partial_Z(U^\epsilon_z f) \,=\, \cL f + \partial_R
  \Bigl(\frac{\epsilon f}{1+\epsilon R}\Bigr)\,,
\end{equation}
for $(R,Z) \in \Omega_\epsilon$ and $t \in (0,T)$, where $\gamma =
\Gamma/\nu$ and $\cL$ is the differential operator defined by
\begin{equation}\label{cLdef}
  \cL f \,=\, (\partial_R^2 + \partial_Z^2)f + \frac12
  (R\partial_R f + Z\partial_Z f) + f\,.
\end{equation}
The homogeneous Dirichlet boundary condition for $f$ reads
$f(-1/\epsilon,Z,t) = 0$ for all $Z \in \R$ and all $t \in (0,T)$. If
we formally take the limit $\epsilon \to 0$ in~\eqref{feq},
\eqref{U-BS} and introduce the fictitious time $\tau = \log(t/T)$, so
that $\partial_\tau = t\partial_t$, we arrive at the evolution
equation
\begin{equation}\label{flimit}
  \partial_\tau f + \gamma U \cdot \nabla f \,=\, \cL f\,, \qquad (R,Z) \in
  \R^2\,,
\end{equation}
where $\partial_R U_r + \partial_Z U_z = 0$ and $\partial_Z U_r - \partial_R U_z
= f$. In other words, we obtain in that limit the two-dimensional vorticity
equation in self-similar variables, which was thoroughly studied, for
instance, in \cite{GW1,GW2}.

We now introduce the weighted $L^2$ space $X = \{f \in L^2(\R^2)\,|\,
\|f\|_X < \infty\}$ where
\begin{equation}\label{Xdef}
  \|f\|_X^2 \,=\, \int_{\R^2} |f(R,Z)|^2 \,e^{(R^2+Z^2)/4}\dd R \dd Z\,.
\end{equation}
For later use we also denote
\begin{equation}\label{wGdef}
  w(R,Z) \,=\, e^{(R^2+Z^2)/4}\,, \qquad G(R,Z) \,=\, \frac{1}{4\pi}
   e^{-(R^2+Z^2)/4}\,, \qquad (R,Z) \in \R^2\,.
\end{equation}
The aim of this section is to prove the following result:

\begin{prop}\label{alpha-prop}
The solution of~\eqref{feq} defined by~\eqref{f-def} satisfies
$\|\bar f(t) - G\|_X \to 0$ as $t \to 0$, where $\bar f$ denotes the extension
of $f$ by zero outside $\Omega_\epsilon$.
\end{prop}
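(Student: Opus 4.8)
The plan is to read the convergence $t \to 0$ as the computation of the $\alpha$-limit set of the rescaled trajectory, and to identify that limit set via the Liouville theorem of \cite{GW2}. Introducing the fictitious time $\tau = \log(t/T)$, so that $\partial_\tau = t\partial_t$ and $t \to 0$ corresponds to $\tau \to -\infty$, the function $f(\tau) := \bar f(\cdot,\cdot,Te^\tau)$ is defined for $\tau \in (-\infty,0)$ and solves the non-autonomous equation \eqref{feq}, in which the parameter $\epsilon = \sqrt{\nu T}\,e^{\tau/2}/\bar r$ tends to $0$ as $\tau \to -\infty$. The overall strategy is to show that every subsequential limit, as $\tau \to -\infty$, is a complete trajectory of the autonomous limit equation \eqref{flimit}, and that the only such trajectory with unit mass is the Gaussian $G$.

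\emph{Compactness.} Fix a sequence $\tau_m \to -\infty$ and consider the time-shifted functions $f_m(\tau) = f(\tau + \tau_m)$, which solve \eqref{feq} with $\epsilon$ replaced by $\epsilon_m(\tau) = \sqrt{\nu T}\,e^{(\tau+\tau_m)/2}/\bar r \to 0$ uniformly on compact $\tau$-intervals. By the uniform Gaussian bound \eqref{Gaussf} and the gradient bound \eqref{unif-derf}, together with interior parabolic smoothing applied to \eqref{feq} (whose drift $U^{\epsilon_m}$ is controlled by Lemma~\ref{decayU}), the family $(f_m)$ is bounded in $C^k$ on compact subsets of $\R^2 \times \R$; by Arzel\`a-Ascoli a subsequence converges, locally uniformly with all derivatives, to some limit $f_\infty(R,Z,\tau)$. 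Since \eqref{Gaussf} provides, for $\eta < 1/2$, a dominating function lying in $X$, the convergence also holds in $X$, uniformly on compact $\tau$-intervals. To pass to the limit in \eqref{feq} I would use Lemma~\ref{Udifflem}, which shows that $U^{\epsilon_m} = \BS^{\epsilon_m}[f_m]$ differs from the two-dimensional velocity $\BS^0[f_m]$ by a quantity of order $\epsilon_m$; together with the vanishing of the lower-order term $\partial_R(\epsilon_m f_m/(1+\epsilon_m R))$ on the right-hand side of \eqref{feq}, this shows that $f_\infty$ solves the autonomous equation \eqref{flimit} for all $\tau \in \R$, i.e. it is a complete trajectory.

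\emph{Identification via Liouville.} The limit $f_\infty$ is nonnegative and inherits the Gaussian bound \eqref{Gaussf}, hence is bounded in $X$. Moreover \eqref{fmass}, applied at times $Te^{\tau+\tau_m} \to 0$, gives $\int_{\Omega_{\epsilon_m}} f_m(\tau)\dd R\dd Z \to 1$, and the uniform tail control furnished by \eqref{Gaussf} (compare Remark~\ref{tight-rem}) allows one to pass this through the limit, so that $\int_{\R^2} f_\infty(\tau)\dd R\dd Z = 1$ for all $\tau$, with no mass escaping to infinity. Setting $g = \gamma f_\infty$, a direct computation using the linearity of $\cL$ and of $\BS^0$ shows that $g$ solves the standard two-dimensional vorticity equation in self-similar variables, $\partial_\tau g + \BS^0[g]\cdot\nabla g = \cL g$, with total mass $\int g = \gamma$. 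The Liouville theorem of \cite{GW2}, asserting that the only bounded complete trajectories of this equation are the Oseen vortices, then forces $g \equiv \gamma G$, that is $f_\infty \equiv G$.

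\emph{Conclusion.} Since the limit $G$ is independent of the chosen sequence $\tau_m$, evaluating the convergence at $\tau = 0$ yields $f(\tau_m) = f_m(0) \to G$ in $X$ along every sequence $\tau_m \to -\infty$, whence $\|\bar f(t) - G\|_X \to 0$ as $t \to 0$, which is the assertion. The main obstacle is the compactness step: one must produce enough uniform regularity, in particular equicontinuity in the fictitious time $\tau$, to extract a limit that genuinely solves \eqref{flimit} on all of $\R$, while simultaneously controlling the convergence $U^{\epsilon_m} \to U^0$ of the Biot-Savart velocities and the preservation of the total mass. Here the Gaussian bound \eqref{Gaussf} is the crucial tool, as it delivers both the tightness needed to rule out loss of mass and the $X$-domination needed to upgrade local uniform convergence to convergence in the weighted norm.
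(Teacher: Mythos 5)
Your outline is correct, but it is not the paper's proof: it is exactly the route the authors describe as the ``intuition'' behind Proposition~\ref{alpha-prop} and then deliberately set aside, remarking that comparing the evolutions defined by \eqref{feq} and \eqref{flimit} requires some work. You stay in self-similar variables, time-shift in the logarithmic time $\tau$, extract a limit by compactness, pass to the limit in the non-autonomous equation \eqref{feq} via Lemma~\ref{Udifflem}, and invoke the Liouville theorem of \cite{GW2} for eternal $X$-bounded solutions of \eqref{flimit}. The paper instead undoes the rescaling: it performs a parabolic blow-up of the \emph{unrescaled} three-dimensional fields, $u^{(m)}(y,s)=\epsilon_m\,u(\bar x+\epsilon_m y,\epsilon_m^2 s)$, $\omega^{(m)}(y,s)=\epsilon_m^2\,\omega(\bar x+\epsilon_m y,\epsilon_m^2 s)$, which solve the \emph{exact} 3D vorticity equation for every $m$, so that the limiting equation is autonomous for free; compactness comes from the scale-invariant bounds \eqref{deru-apriori}, the limit is identified through \eqref{omm_rep} and the decay estimate \eqref{baru_est} (from Lemma~\ref{decayU}) as a planar solution of the 2D Navier--Stokes equations with the Gaussian bound and mass $\Gamma$ of \eqref{barom_prop}, hence with initial vorticity $\Gamma\delta_0$, and then \cite[Proposition~1.3]{GW2} gives the Oseen vortex, so $h_*=G$ after evaluating at $s_*=\bar r^2/\nu$. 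The trade-off is clear: the paper's detour through physical variables avoids entirely the convergence of the $\epsilon$-dependent Biot--Savart law and the non-autonomous-to-autonomous passage, needing only \eqref{deru-apriori}; your route is conceptually more direct but must supply (i) uniform higher-order estimates and $\tau$-equicontinuity for the compactness step --- the paper records only the $C^1$ bounds \eqref{Gaussf}, \eqref{unif-derf}, and the second-order bounds needed for $\cL f$ must be derived from interior parabolic smoothing, as you acknowledge --- and (ii) the locally uniform convergence $U^{\epsilon_m}\to \BS^0[f_\infty]$, for which Lemma~\ref{Udifflem} yields $\cO(\epsilon_m|\log\epsilon_m|)$ on compact sets (not $\cO(\epsilon_m)$ as stated; immaterial since it vanishes). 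Your identification step is sound: the Gaussian envelope with $\eta<1/2$ upgrades local uniform convergence to convergence in $X$ and preserves the mass \eqref{fmass}, and the rescaling $g=\gamma f_\infty$ correctly reduces \eqref{flimit} to the standard rescaled 2D vorticity equation to which the Liouville theorem applies. Note finally that the two routes are cousins rather than strangers: the proof of \cite[Proposition~1.3]{GW2} that the paper invokes itself rests on the same Liouville-type rigidity that you use directly.
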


Proposition~\ref{alpha-prop} means that the axisymmetric vorticity
$\omega_\theta(r,z,t)$ is not only bounded from above by a
self-similar function with Gaussian profile, as asserted in
\eqref{Gaussf}, but actually approaches a uniquely determined
self-similar solution of the 2d vorticity equation as $t \to
0$. Before giving a detailed proof, we make some preliminary
remarks. Let $X_0 \subset X$ be the Banach space defined by the norm
\begin{equation}\label{X0def}
  \|f\|_{X_0} \,=\, \sup_{(R,Z) \in \R^2} |f(R,Z)|\, e^{\frac{1-\eta}{4}(R^2+Z^2)}
  ~+ \sup_{(R,Z) \in \R^2} |\nabla f(R,Z)|\,,
\end{equation}
where $\eta \in (0,1/2)$ is any fixed real number. We have the
following elementary result:

\begin{lem}\label{elem}
The space $X_0$ is compactly embedded in $X$, and the unit ball in $X_0$
is closed for the topology induced by $X$.
\end{lem}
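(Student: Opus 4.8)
The plan is to prove both assertions from a single Arzel\`a--Ascoli argument combined with a weighted tail estimate, the arithmetic of the two exponents being the crux. First I would record that the embedding is at least bounded: if $\|f\|_{X_0}\le 1$ then $|f(R,Z)|\le e^{-\frac{1-\eta}{4}(R^2+Z^2)}$, so that
\[
  \|f\|_X^2 \,\le\, \int_{\R^2} e^{-\frac{1-\eta}{2}(R^2+Z^2)}\,w(R,Z)\dd R\dd Z
  \,=\, \int_{\R^2} e^{-(\frac14-\frac{\eta}{2})(R^2+Z^2)}\dd R\dd Z \,<\, \infty\,,
\]
the finiteness holding precisely because $\eta<1/2$. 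This same computation, localized to the complement of a large ball, is what will control the tails below.

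For the compactness of $X_0 \hookrightarrow X$, I would take a sequence $(f_n)$ with $\|f_n\|_{X_0}\le 1$ (which suffices by homogeneity). The bounds $|f_n|\le 1$ and $|\nabla f_n|\le 1$ make the family uniformly bounded and equi-Lipschitz, so Arzel\`a--Ascoli on an exhausting sequence of balls $B_k$ together with a diagonal extraction yields a subsequence $f_{n_j}$ converging locally uniformly to a continuous limit $f$. Passing to the limit in the pointwise bound gives $|f(R,Z)|\le e^{-\frac{1-\eta}{4}(R^2+Z^2)}$, so $f\in X$. To upgrade to convergence in $X$, I would split $\|f_{n_j}-f\|_X^2$ over $B_k$ and its complement: on $B_k$ the weight $w$ is bounded and the convergence is uniform, so that piece tends to $0$ for each fixed $k$; on the complement the Gaussian pointwise bounds give $\int_{|(R,Z)|>k}|f_{n_j}-f|^2\,w \le C\int_{|(R,Z)|>k} e^{-(\frac14-\frac{\eta}{2})(R^2+Z^2)}$, which is small uniformly in $j$ once $k$ is large. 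Letting first $k\to\infty$ and then $j\to\infty$ gives $\|f_{n_j}-f\|_X\to 0$.

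For the closedness of the unit ball in the $X$-topology, I would start from $f_n\to f$ in $X$ with $a_n+b_n\le 1$, where $a_n=\sup_{\R^2}|f_n|\,e^{\frac{1-\eta}{4}(R^2+Z^2)}$ and $b_n=\sup_{\R^2}|\nabla f_n|$. Since $a_n,b_n\in[0,1]$, I may pass to a subsequence along which $a_n\to\alpha$ and $b_n\to\beta$ with $\alpha+\beta\le 1$; by the compactness just established, a further subsequence converges in $X$ and locally uniformly to a continuous function, which must equal $f$ by uniqueness of the $X$-limit. Local uniform convergence then yields $|f(R,Z)|\,e^{\frac{1-\eta}{4}(R^2+Z^2)}\le\alpha$ at every point, hence $\sup|f|\,e^{\frac{1-\eta}{4}(R^2+Z^2)}\le\alpha$, while the equi-Lipschitz bound forces $f$ to be $\beta$-Lipschitz and therefore $\|\nabla f\|_{L^\infty}\le\beta$. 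Adding these gives $\|f\|_{X_0}\le\alpha+\beta\le 1$, so $f$ lies in the unit ball.

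The only delicate point will be the tail estimate in the compactness step: everything hinges on the strict inequality $\eta<1/2$, which makes the exponent $\frac14-\frac{\eta}{2}$ positive and renders the weighted integrand integrable near infinity. Were $\eta=1/2$, the embedding would fail even to be bounded, so this is where the hypothesis on $\eta$ is genuinely used; the remaining steps are routine Arzel\`a--Ascoli and lower-semicontinuity arguments.
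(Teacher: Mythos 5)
Your proof is correct: the paper states this lemma without proof (calling it elementary), and your argument --- Arzel\`a--Ascoli with the Gaussian pointwise bound controlling the weighted tails via the exponent $\tfrac14-\tfrac{\eta}{2}>0$ for compactness, and pointwise passage to the limit plus the Lipschitz bound for closedness of the unit ball --- is exactly the standard argument the authors have in mind. The exponent bookkeeping and the role of the hypothesis $\eta<1/2$ are handled correctly, so there is nothing to add.
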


According to~\eqref{Gaussf} and~\eqref{unif-derf} the trajectory
$(\bar f(t))_{t \in (0,T)}$ is bounded in $X_0$, hence relatively
compact in $X$.  We can thus consider the $\alpha$-limit set
\[
  \cA \,=\, \bigl\{h \in X\,\big|\, \hbox{there exists a sequence }
  t_m \to 0 \hbox{ such that } \|\bar f(t_m) - h\|_X \to 0
  \hbox{ as }m \to \infty\bigr\}\,,
\]
which is of course nonempty. We know from Lemma~\ref{elem} that $\cA$
is bounded in $X_0$, and in view of~\eqref{fmass} any $h \in \cA$
satisfies $\int h \dd R \dd Z = 1$. Proposition~\ref{alpha-prop}
asserts that $\cA$ is a {\em singleton}, namely $\cA = \{G\}$. The
intuition behind this result is that the $\alpha$-limit set $\cA$ is
(positively and negatively) invariant under the evolution defined on
the whole plane $\R^2$ by the limiting equation~\eqref{flimit}, which
is obtained by formally taking the limit $\epsilon \to 0$ in
\eqref{feq}. But it is proved in \cite{GW2} that the only solutions of
\eqref{flimit} that are uniformly bounded in $X$ for all negative times 
$\tau$ are equilibria of the form $f = \alpha G$, for some $\alpha \in \R$. 
Since we have the normalization condition $\int h \dd R \dd Z = 1$ for 
any $h \in \cA$, we conclude that $\cA = \{G\}$.

In fact, comparing the evolutions defined by equations~\eqref{feq} and
\eqref{flimit} requires some work, so we prefer using a different
argument to establish Proposition~\ref{alpha-prop}.

\begin{proof}[\bf Proof of Proposition~\ref{alpha-prop}.] Let $h_*
\in \cA$, and let $(t_m)$ be a sequence in $(0,T)$ such that $t_m \to
0$ and $\|\bar f(t_m) - h_*\|_X \to 0$ as $m \to \infty$. Our goal is to
show that $h_* = G$. To prove that, it is convenient to return to the
three-dimensional formulation of the vorticity equation. As in
\eqref{uomaxi}, we denote by $u(x,t)$ and $\omega(x,t)$ the
three-dimensional velocity and vorticity fields, respectively.
For any $m \in \N$, any $y \in \R^3$, and any $s \in (0,T\epsilon_m^{-2})$,
we define
\begin{equation}\label{omum_def}
 \left\{\begin{array}{l}
  \,u^{(m)}(y,s) \,=\, \epsilon_m \,u(\bar x + \epsilon_m y\,,\,\epsilon_m^2s)\,,
  \\[1mm] \omega^{(m)}(y,s) \,=\, \epsilon_m^2\,\omega(\bar x+\epsilon_my\,,\,
  \epsilon_m^2s)\,, \end{array}\right.
\end{equation}
where $\bar x = (\bar x_1, \bar x_2, \bar x_3) = (\bar r,0,\bar z) \in \R^3$
and, in agreement with~\eqref{RZeps-def},
\[
   \epsilon_m \,=\, \frac{\sqrt{\nu t_m}}{\bar r}\,, \qquad
   m \in \N\,.
\]
In other words, the vector fields $u^{(m)}, \omega^{(m)}$ are defined
by a self-similar blow-up of the original quantities $u,\omega$
near the point $\bar x \in \R^3$ and near the initial time $t = 0$.

It is clear that $u^{(m)}, \omega^{(m)}$ satisfy the three-dimensional
vorticity equation
 \begin{equation}\label{uom_eq}
   \partial_s \omega^{(m)} + [u^{(m)},\omega^{(m)}] -\nu\Delta \omega^{(m)}
  \,=\, 0\,,
\end{equation}
for $y \in \R^3$ and $0 < s < T\epsilon_m^{-2}$, together with the
constraints $\div u^{(m)} = 0$ and $\curl u^{(m)} = \omega^{(m)}$.
This is due to the scaling and translational symmetries of the
equations. Note that, in~\eqref{uom_eq} and in the rest of the proof,
all spatial derivatives act on the variable $y \in \R^3$. In view of
\eqref{deru-apriori} and~\eqref{omum_def}, we have the a priori
estimates
\[
  \|\partial_s^k \nabla_y^\ell u^{(m)}(s)\|_{L^\infty(\R^3)} \,\le\,
  \frac{C_{k\ell}(M)M}{s^k (\nu s)^{\ell/2}}\sqrt{\frac{\nu}{s}}\,,
  \qquad 0 < s < T\epsilon_m^{-2}\,,
\]
which hold for all indices $k,\ell \in \N$, uniformly in $m \in \N$. Up to
extracting a subsequence, we can therefore assume that
\[
   u^{(m)}\to \bar u\,, \qquad \omega^{(m)}\to\bar\omega\,,
   \qquad \hbox{as}\quad m \to \infty\,,
\]
with uniform convergence of both vector fields and all their derivatives
on any compact subset of $\R^3 \times (0,\infty)$. The limiting fields
$\bar u,\bar \omega$ are smooth on $\R^3 \times (0,\infty)$ and satisfy
\begin{equation}\label{uom_limit}
  \partial_s \bar\omega + [\bar u,\bar\omega] - \nu\Delta\bar \omega \,=\, 0\,,
\end{equation}
together with $\div\bar u=0$ and $\curl\bar u =\bar \omega$.

We now relate the limiting vorticity $\bar\omega$ to the $\alpha$-limit
points of the rescaled vorticity $f$. In view of~\eqref{f-def} and
\eqref{omum_def}, we have, for all $m \in \N$, all $y \in \R^3$, and
all $s \in (0,T\epsilon_m^{-2})$,
\begin{equation}\label{omm_rep}
  \omega^{(m)}(y,s) \,=\, \frac{\Gamma}{\nu s}f\biggl(
  \frac{\sqrt{(\bar r + \epsilon_m y_1)^2 + \epsilon_m^2 y_2^2}-\bar r}{
  \epsilon_m \sqrt{\nu s}}\,,\frac{y_3}{\sqrt{\nu s}}\,,\epsilon_m^2 s
  \biggr)\,e_\theta(\bar x + \epsilon_m y)\,.
\end{equation}
For any fixed $s > 0$, we can assume (up to extracting another
subsequence) that $f(\cdot,\cdot,\epsilon_m^2 s)$ converges in the
topology of $X$ to some $h_s \in \cA$ as $m \to \infty$. Since
$f(\cdot,\cdot,t)$ is bounded in $X_0$, the convergence also holds
uniformly on any compact set of $\R^3$. Thus taking the limit
$m \to \infty$ in~\eqref{omm_rep} and observing that $e_\theta(\bar x)
= e_2 = (0,1,0)$, we obtain
\begin{equation}\label{barom_rep}
  \bar \omega(y,s) \,=\, \frac{\Gamma}{\nu s} h_s\Bigl(\frac{y_1}{\sqrt{\nu s}}
  \,,\frac{y_3}{\sqrt{\nu s}}\Bigr)\,e_2 \,=:\, \bigl(0,\bar \omega_2(y_1,y_3,s),
  0\bigr)\,.
\end{equation}
We deduce in particular that
\begin{equation}\label{barom_prop}
  |\bar \omega_2(y_1,y_3,s)| \,\le\, K_\eta(M)\frac{\Gamma}{\nu s}
  \,e^{-\frac{1-\eta}{4\nu s}(y_1^2 + y_3^2)}\,, \quad \hbox{and}\quad
  \int_{\R^2} \bar \omega_2(y_1,y_3,s)\dd y_1\dd y_3 \,=\, \Gamma\,.
\end{equation}

Similarly, in view of~\eqref{U-def} and~\eqref{omum_def}, the velocity field
satisfies
\[
  u^{(m)}(y,s) \,=\, \frac{\Gamma}{\sqrt{\nu s}}\,U^{\epsilon_m}\biggl(
  \frac{\sqrt{(\bar r + \epsilon_m y_1)^2 + \epsilon_m^2 y_2^2}-\bar r}{
  \epsilon_m \sqrt{\nu s}}\,,\frac{y_3}{\sqrt{\nu s}}\,,\epsilon_m^2 s
  \biggr)\,.
\]
Applying estimate~\eqref{decayU-est} to the right-hand side and
taking the limit $m \to \infty$, we obtain
\begin{equation}\label{baru_est}
  |\bar u(y,s)| \,\le\, \frac{C_{15}(M)\Gamma }{\sqrt{\nu s} + |y_1| + |y_3|}\,,
  \qquad y \in \R^3\,, \quad s > 0\,.
\end{equation}
If follows from these observations that $\bar u$ is given by
\begin{equation}\label{baru_rep}
  \bar u(y,s) \,=\, \bar u_1(y_1,y_3,s) e_1 + \bar u_3(y_1,y_3,s) e_3 \,=\,
  \bigl(\bar u_1(y_1,y_3,s),0,\bar u_3(y_1,y_3,s)\bigr)\,,
\end{equation}
where $(\bar u_1,\bar u_3)$ is the two-dimensional velocity field
obtained from the scalar vorticity $\bar \omega_2$ via the Biot-Savart
law in $\R^2$. Indeed, the velocity field defined by~\eqref{baru_rep}
satisfies $\partial_1 \bar u_1 + \partial_3 \bar u_3 = 0$ and
$\partial_3 \bar u_1 - \partial_1 \bar u_3 = \bar \omega_2$. We thus
have $\div \bar u = 0$ and $\curl \bar u = \bar\omega$, and that
elliptic system has no other solution with the property
\eqref{baru_est}.

Summarizing, we have shown that the limiting vorticity $\bar\omega_2$,
together with the associated velocity $(\bar u_1,\bar u_3)$, solves the
Navier-Stokes equations in $\R^2 \times (0,\infty)$, and it follows from
\eqref{barom_prop} that $\bar\omega_2(\cdot,s)$ is uniformly bounded
in $L^1(\R^2)$ and converges weakly to the Dirac measure $\Gamma
\delta_0$ as $s \to 0$. Invoking \cite[Proposition~1.3]{GW2}, we
deduce that, for any $s > 0$,
\begin{equation}\label{barom_fin}
  \bar\omega_2(y_1,y_3,s) \,=\, \frac{\Gamma}{\nu s}\,G\Bigl(
  \frac{y_1}{\sqrt{\nu s}}
  \,,\frac{y_3}{\sqrt{\nu s}}\Bigr) \,=\, \frac{\Gamma}{4\pi\nu s}
  \,e^{-(y_1^2 + y_3^2)/(4\nu s)}\,, \qquad (y_1,y_3) \in \R^2\,.
\end{equation}
In particular, setting $s = s_* = \bar r^2/\nu$, so that $\epsilon_m^2 s = t_m$,
and comparing~\eqref{barom_rep} with~\eqref{barom_fin}, we conclude that
$h_* = G$, which is the desired result.
\end{proof}

\subsection{Short time asymptotics}\label{sec44}

The goal of this section is to establish the short time estimate
\eqref{short-time}. Let $\chi : [0,\infty) \to [0,1]$ be a smooth
nonincreasing function such that $\chi(x) = 1$ for $x \in [0,1/4]$ and
$\chi(x) = 0$ for $x \ge 1/2$. We define
\begin{equation}\label{f0-def}
  f_0(R,Z,t) \,=\, G(R,Z) \chi(\epsilon^2(R^2{+}Z^2))\,, \qquad
  (R,Z) \in \R^2\,, \quad t \in (0,T)\,,
\end{equation}
where $\epsilon = \sqrt{\nu t}/\bar r$ and $G(R,Z) = (4\pi)^{-1}
e^{-(R^2+Z^2)/4}$, see \eqref{RZeps-def}, \eqref{wGdef}. Due to the localization
function $\chi$, it is clear that $f_0(R,Z,t)$ vanishes when
$\epsilon R < -1/\sqrt{2}$. In particular, $f_0$ satisfies the Dirichlet
boundary condition in the time-dependent domain $\Omega_\epsilon =
\{(R,Z) \in \R^2\,|\, 1+\epsilon R > 0\}$.

If $f$ is the solution of \eqref{feq} given by \eqref{f-def}, and if
$U^\epsilon$ is the associated velocity field, we decompose
\begin{equation}\label{fUdecomp}
  \left\{\hspace{-3pt}\begin{array}{l}
  \hspace{6pt}f(R,Z,t) \,=\, f_0(R,Z,t) + \tilde f(R,Z,t)\,, \\[1mm]
  U^\epsilon(R,Z,t) \,=\, U_0^\epsilon(R,Z,t) + \tilde U^\epsilon(R,Z,t)\,,
  \end{array}\right.
  \qquad (R,Z) \in \Omega_\epsilon\,, \quad t \in (0,T)\,,
\end{equation}
where $U_0^\epsilon = \BS^\epsilon[f_0]$ and $\tilde U^\epsilon = \BS^\epsilon[
\tilde f]$. The equation satisfied by the perturbation $\tilde f$
is
\begin{equation}\label{tildefeq}
  t \partial_t \tilde f + \gamma \div_* \bigl(U_0^\epsilon \tilde f +
  \tilde U^\epsilon f_0\bigr) + \gamma\div_* \bigl(\tilde U^\epsilon
  \tilde f\bigr) \,=\, \cL \tilde f + \partial_R
  \Bigl(\frac{\epsilon\tilde f}{1+\epsilon R}\Bigr) + \cH\,,
\end{equation}
where $\cH$ is a source term which quantifies by how much $f_0$
fails to be an exact solution of \eqref{feq}. Explicitly,
\begin{equation}\label{HHdef}
  \cH \,=\, \cL f_0 + \partial_R \Bigl(\frac{\epsilon f_0}{1+\epsilon R}
  \Bigr) -t \partial_t f_0 -\gamma\div_* \bigl(U_0^\epsilon f_0\bigr)\,.
\end{equation}
Here and in what follows, if $V = (V_r,V_z)$ is a vector field
on $\Omega_\epsilon$ or on the whole plane $\R^2$, we denote $\div_* V =
\partial_R V_r + \partial_Z V_z$. Note that the perturbation
$\tilde f$ still satisfies the Dirichlet boundary condition on
$\partial\Omega_\epsilon$.

It is clear from definition \eqref{f0-def} that $f_0$ belongs for all
times to the space $X$ introduced in \eqref{Xdef}, and that $\|f_0(t)
- G\|_X \to 0$ as $t \to 0$. Thus the perturbation $\tilde f$
(implicitly extended by zero outside $\Omega_\epsilon$) belongs to $X$
for all $t \in (0,T)$, and Proposition~\ref{alpha-prop} implies that
$\|\tilde f(t)\|_X \to 0$ as $t \to 0$. In the rest of this section,
using appropriate energy estimates, we prove that $\|\tilde f(t)\|_X =
\cO(\epsilon| \log\epsilon|)$ as $t \to 0$, and this implies
\eqref{short-time} in view of the continuous injection $X
\hookrightarrow L^1(\R^2)$.

For any $t \in (0,T)$, we define
\begin{equation}\label{Edef}
  E(t) \,=\, \frac12 \int_{\Omega_\epsilon} \tilde f(R,Z,t)^2
  w(R,Z)\dd R\dd Z\,,
\end{equation}
where $w(R,Z) = e^{(R^2+Z^2)/4}$, see \eqref{wGdef}. Although
the integral in \eqref{Edef} is taken over the time-dependent domain
$\Omega_\epsilon$, there is no contribution from the boundary when
we differentiate with respect to time, because $\tilde f$ satisfies
the homogeneous Dirichlet condition on $\partial\Omega_\epsilon$.
Using \eqref{tildefeq}, we thus obtain
\begin{align}\nonumber
  t E'(t) \,&=\, \int_{\Omega_\epsilon} \tilde f(R,Z,t)
  \bigl(t\partial_t \tilde f(R,Z,t)\bigr)w(R,Z)\dd R\dd Z \\ \label{Ediff}
  \,&=\, D_1(t) + D_2(t) + H(t) - \gamma\Bigl(A_1(t) +
  A_2(t) + N(t)\Bigr)\,,
\end{align}
where
\begin{align*}
  D_1(t) \,&=\, \int_{\Omega_\epsilon} \tilde f \,(\cL \tilde f)w\dd R \dd Z\,,
  \hspace{50pt} D_2(t) \,=\, \int_{\Omega_\epsilon} \tilde f\, \partial_R
  \Bigl(\frac{\epsilon\tilde f}{1+\epsilon R}\Bigr)w\dd R \dd Z\,, \\
  A_1(t) \,&=\, \int_{\Omega_\epsilon} \tilde f \,\div_* \bigl(U_0^\epsilon \tilde f
  \bigr)w\dd R \dd Z\,, \qquad
  A_2(t) \,=\, \int_{\Omega_\epsilon} \tilde f \,\div_* \bigl(\tilde U^\epsilon f_0
  \bigr)w\dd R \dd Z\,, \\
  H(t) \,&=\, \int_{\Omega_\epsilon} \tilde f \,\cH w\dd R \dd Z\,, \hspace{66pt}
  N(t) \,=\, \int_{\Omega_\epsilon} \tilde f \,\div_* \bigl(\tilde U^\epsilon
  \tilde f\bigr)w\dd R \dd Z\,.
\end{align*}
The main result of this section is

\begin{prop}\label{energy-prop}
There exists $\delta > 0$ and, for any $\gamma > 0$, there
exist $\epsilon_0 \in (0,1/2)$ and $\kappa > 0$ such that,
if $t > 0$ is small enough so that $\epsilon \le \epsilon_0$, then
\begin{equation}\label{energy-decay}
  tE'(t) \,\le\, -2\delta \cE(t) + \kappa \epsilon |\log \epsilon|
  \,E(t)^{1/2} + \kappa E(t)^{1/2}\cE(t) + \cR(t)\,,
\end{equation}
where $\cR(t) \le e^{-1/(36\epsilon^2)}$ and
\begin{equation}\label{EEdef}
  \cE(t) \,=\, \frac12\int_{\Omega_\epsilon}\Bigl(|\nabla \tilde f|^2 +
  (1+R^2+Z^2)\tilde f^2\Bigr)w \dd R\dd Z \,\ge\, E(t)\,.
\end{equation}
\end{prop}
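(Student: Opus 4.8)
The plan is to bound separately the six terms in the decomposition \eqref{Ediff} of $tE'(t)$: the linear dissipative term $D_1$, the lower-order term $D_2$, the source term $H$, the two linearized transport terms $A_1,A_2$, and the genuinely nonlinear term $N$. The mechanism I aim for is that $D_1$ supplies the coercive $-2\delta\cE(t)$, the term $H$ produces the inhomogeneity $\kappa\epsilon|\log\epsilon|E^{1/2}$, the term $N$ produces the cubic contribution $\kappa E^{1/2}\cE$, while $D_2$, $A_1$ and $A_2$ turn out to be small corrections absorbed into $\delta\cE$ once $\epsilon\le\epsilon_0(\gamma)$. The two structural facts that make the estimate uniform in the circulation $\gamma$ are that the leading parts of $A_1$ and $A_2$ vanish identically.

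For the linear term I would integrate by parts (no boundary contributions arise, as noted before \eqref{Ediff}) to get $D_1=\langle\tilde f,\cL\tilde f\rangle_X$, with $\cL$ as in \eqref{cLdef}. Conjugating by the weight $w^{1/2}$ turns $\cL$ into the harmonic oscillator $\Delta-\tfrac1{16}(R^2+Z^2)+\tfrac12$, which is self-adjoint and nonpositive on $X$ with discrete spectrum $\{0,-\tfrac12,-1,\dots\}$, the kernel being spanned by $G$; this is the spectral picture of \cite{GW2}. Decomposing $\tilde f=\beta G+\tilde f^{\perp}$ with $\tilde f^{\perp}\perp_X G$, the neutral mode drops out of $D_1$, and since the oscillator has a gap above its ground state one obtains $\langle\tilde f^{\perp},\cL\tilde f^{\perp}\rangle_X\le-\delta_0\,\cE[\tilde f^{\perp}]$, the full weighted energy $\cE$ (including the gradient and $(1+R^2+Z^2)$ terms) being controlled by the oscillator quadratic form. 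The coefficient $\beta$ is a multiple of $\int_{\Omega_\epsilon}\tilde f\,\dd R\dd Z=\int f-\int f_0$; the cutoff error $1-\int f_0$ is $\cO(e^{-1/(16\epsilon^2)})$, and the mass defect $1-\int f$ is exponentially small because by Proposition~\ref{L1prop-near} (with $\rho=\bar r/2$) essentially all of the mass stays away from the axis, where the only mass flux of \eqref{omeq2} is located. Hence $\beta$, and with it the neutral-mode contribution to $\cE$, is $\cO(e^{-c/\epsilon^2})$ and is swept into $\cR(t)$.

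The heart of the matter is the transport terms. For $A_1$, integration by parts gives $A_1=\tfrac12\int\tilde f^{2}w\,\div_*U_0^\epsilon-\tfrac12\int\tilde f^{2}\,U_0^\epsilon\!\cdot\!\nabla w$; here $\div_*U_0^\epsilon=\cO(\epsilon)$ by the incompressibility constraint in \eqref{U-BS}, and $\nabla w=\tfrac12(R,Z)w$, so the dangerous factor is $U_0^\epsilon\!\cdot\!(R,Z)$. Replacing $U_0^\epsilon$ by the exact Oseen field $u_G=\BS^0[G]$ costs $\cO(\epsilon|\log\epsilon|)$ by Lemma~\ref{Udifflem} plus the exponentially small localization error $f_0-G$, and $u_G$ is azimuthal, hence orthogonal to $(R,Z)$; thus $A_1=\cO(\epsilon|\log\epsilon|)\,\cE+\cR$. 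For $A_2$ the leading part is $\int\tilde f\,\tilde U^0\!\cdot\!\nabla G\,w=-\tfrac1{8\pi}\int\tilde f\,\tilde U^0\!\cdot\!(R,Z)\,\dd R\dd Z$, where $\tilde U^0=\BS^0[\tilde f]$; writing $\tilde U^0=\nabla^\perp\psi$ with $-\Delta\psi=\tilde f$, one has $\tilde U^0\!\cdot\!(R,Z)=-\partial_\alpha\psi$, and since the rotation generator $\partial_\alpha$ is skew-adjoint and commutes with $\Delta$, this integral vanishes identically. The $\epsilon$-corrections of $A_2$ are again $\cO(\epsilon|\log\epsilon|)\cE+\cR$. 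After multiplying by $\gamma$ and choosing $\epsilon_0$ so small that $\gamma\epsilon_0|\log\epsilon_0|<\delta_0$, both $\gamma A_1$ and $\gamma A_2$ (and likewise $D_2=\cO(\epsilon)$) are absorbed into the dissipation.

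Finally I would record $H$ and $N$. Because $\cL G=0$ and the Oseen vortex is a steady state of the limiting equation, i.e. $\div_*(u_G G)=0$, the source $\cH$ in \eqref{HHdef} reduces to $\epsilon$-corrections of the Biot-Savart law and cutoff errors; Lemma~\ref{Udifflem} then yields $\|\cH\|_X=\cO(\epsilon|\log\epsilon|)$, whence $|H|\le\sqrt2\,\|\cH\|_X E^{1/2}\le\kappa\epsilon|\log\epsilon|E^{1/2}$ up to an exponentially small remainder. The nonlinear term is cubic: integrating by parts and bounding $\tilde U^\epsilon$ in terms of $\tilde f$ via the decay estimate of Lemma~\ref{decayU} gives $|N|\le\kappa E^{1/2}\cE$. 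Collecting all contributions, with the exponentially small errors gathered into $\cR(t)\le e^{-1/(36\epsilon^2)}$, yields \eqref{energy-decay}. I expect the main obstacle to be the coercivity of $D_1$ in the \emph{full} energy norm $\cE$ (not merely in $\|\cdot\|_X$) together with the clean control of the neutral mode through the mass defect; conceptually the most delicate point is the vanishing of the leading part of $A_2$, which rests entirely on the rotational symmetry of the Gaussian and is precisely what permits $\gamma$ to be arbitrarily large.
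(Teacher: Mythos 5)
Your proposal follows essentially the same route as the paper's proof: the same six-term decomposition \eqref{Ediff} of $tE'(t)$; coercivity of $D_1$ via conjugation by $w^{1/2}$ to the harmonic oscillator and the spectral gap after removing the mass mode (the paper packages this as a convex combination of the identities \eqref{D1}, \eqref{D2}, \eqref{D4}, but the spectral content is identical); exponential smallness of the mass defect from the axis estimates of Section~\ref{sec31}; the radial-symmetry cancellations $U_0^0\cdot\nabla f_0 = 0$ and $U_0^0\cdot\nabla w = 0$ combined with Lemma~\ref{Udifflem} for $H$ and $A_1$; and the key identity $\int \tilde f\,(\BS^0[\tilde f]\cdot\nabla G)\,w \dd R\dd Z = 0$ for $A_2$ --- your rotation-generator computation is precisely the proof of \cite[Lemma~4.8]{GW2}, which the paper cites rather than reproves. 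Two citations are loose but harmless: in $A_1$ you compare with $\BS^0[G]$ instead of $\BS^0[f_0]$ (cosmetic, at the price of an extra exponentially small error), and for the mass defect $1-\int f$ the \emph{statement} of Proposition~\ref{L1prop-near} only bounds the mass near the axis, whereas what is needed is the lower bound \eqref{flowerbound} from inside its proof, which is exactly how the paper derives \eqref{mass2}; your heuristic (mass leaks only through the axis flux) would require additional pointwise gradient control there to be made rigorous. You also gloss over $D_2$ in the boundary region where $1+\epsilon R$ is small, where the factor $\epsilon/(1+\epsilon R)$ is not $\cO(\epsilon)$ and the paper must invoke the Gaussian bound \eqref{Gaussf} on $f$.

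The one step that would fail as written is the nonlinear term $N$. Lemma~\ref{decayU} is an a priori bound on the \emph{full} velocity $U^\epsilon = \BS^\epsilon[f]$ by a constant $C_{15}(M)$; adapted to $\tilde U^\epsilon$ (possible, since $\tilde f = f - f_0$ is Gaussian-bounded) it carries no smallness in the perturbation and yields only $|N| \le C(\gamma)\,E^{1/2}\cE^{1/2}$. This is quadratic rather than cubic in $\tilde f$: by Young's inequality it costs a term $C(\gamma)^2 E/(4\delta)$ that cannot be absorbed by $-2\delta\cE$ when $\gamma$ is large, so the claimed form $\kappa E^{1/2}\cE$ --- whose absorption rests on $E(t)\to 0$ from Proposition~\ref{alpha-prop} --- is lost, and with it both \eqref{ener1} and the super-exponential bound \eqref{firstbdd} used for uniqueness. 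The correct tool, consistent with your stated intent of bounding $\tilde U^\epsilon$ in terms of $\tilde f$ itself, is the kernel bound \eqref{Uepsbound} plus the Hardy--Littlewood--Sobolev inequality, $\|\tilde U^\epsilon\|_{L^4} \le C\|\tilde f\|_{L^{4/3}} \le C E^{1/2}$, combined with the trilinear H\"older inequality (exponents $4,4,2$) and the interpolation $\|\tilde f w^{1/2}\|_{L^4}^2 \le C E^{1/2}\cE^{1/2}$; this gives $|N| \le C E^{3/4}\cE^{3/4} \le C E^{1/2}\cE$ as in \eqref{N1}. With that replacement your argument matches the paper's proof in substance.
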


\begin{proof}
We proceed in several steps.

\medskip\noindent{\bf Step 1: }{\it Control of the mass}.
For any $t \in (0,T)$ we denote
\begin{equation}\label{massdef}
  m(t) \,=\, \int_{\Omega_\epsilon} \tilde f(R,Z,t)\dd R\dd Z
  \,=\, \int_{\Omega_\epsilon} \bigl(f(R,Z,t) - f_0(R,Z,t)\bigr)
  \dd R\dd Z\,.
\end{equation}
We shall show that $m(t)$ is extremely small for short times.
Indeed, since $\int_{\R^2} G\dd R\dd Z = 1$, it follows from
definition \eqref{f0-def} that
\begin{equation}\label{mass1}
  0 \,\le\, 1 - \int_{\Omega_\epsilon} f_0(R,Z,t)\dd R\dd Z \,=\,
  \int_{\R^2} \Bigl(1-\chi(\epsilon^2(R^2{+}Z^2))\Bigr)G \dd R
  \dd Z \,\le\, e^{-1/(16\epsilon^2)}\,,
\end{equation}
because, in the last integral, the integrand vanishes when
$R^2 + Z^2 \le 1/(4\epsilon^2)$. On the other hand, estimate
\eqref{flowerbound} (where one can take $\rho = \bar r/2$)
shows that
\[
  \Gamma \,\ge\, \int_{\bar r/2}^\infty\biggl\{\int_\R \omega_\theta(r,z,t)
  \dd z\biggr\}\dd r \,\ge\, \Gamma \Bigl(1 - e^{-\bar r^2/(64\nu t)}\Bigr)\,,
\]
and in view of \eqref{f-def} this implies that
\begin{equation}\label{mass2}
  0 \,\le\, 1 - \int_{\Omega_\epsilon} f(R,Z,t)\dd R\dd Z \,\le\,
  e^{-1/(64\epsilon^2)}\,.
\end{equation}
Combining \eqref{mass1} and \eqref{mass2}, we deduce that
\begin{equation}\label{massbound}
  |m(t)| \,\le\, e^{-1/(64\epsilon^2)}\,, \qquad \hbox{where}\quad
  \epsilon \,=\, \frac{\sqrt{\nu t}}{\bar r}\,.
\end{equation}

\medskip\noindent{\bf Step 2: }{\it The diffusive terms}.  After this
preliminary step, we estimate separately the various terms in the
right-hand side of \eqref{Ediff}, starting with $D_1(t)$ and $D_2(t)$
which originate from the diffusion operator in \eqref{tildefeq}. Using
the identity $(\cL\tilde f)w \,=\, \div_*(w \nabla\tilde f) + w \tilde f$
and integrating by parts, we first obtain
\begin{equation}\label{D1}
  D_1 \,=\, \int \tilde f (\cL\tilde f) w \dd R \dd Z \,=\,
  \int \Bigl(-|\nabla \tilde f|^2 + \tilde f^2\Bigr)w \dd R\dd Z\,.
\end{equation}
Here and in what follows, all integrals are taken over the domain
$\Omega_\epsilon$, or over the whole plane $\R^2$ if one extends the
integrands by zero outside $\Omega_\epsilon$ (as we implicitly do when
necessary). For simplicity we also write $\tilde f$ instead of $\tilde
f(R,Z,t)$, and similarly for other quantities.

Estimate \eqref{D1} is not sufficient for our purposes, because
it is not clear if the right-hand side is negative. To improve it, we
observe that $\tilde f (\cL\tilde f) w = \tilde g (L\tilde g)$ where
$\tilde g = w^{1/2} \tilde f$ and $L$ is the linear operator defined
by
\[
  L \tilde g \,=\, \Delta \tilde g - \frac{R^2+Z^2}{16}\tilde g
  + \frac12 \tilde g\,.
\]
We thus have the alternative formula
\begin{equation}\label{D2}
  D_1 \,=\, \int \tilde g(L\tilde g)\dd R\dd Z \,=\, \int
  \Bigl(-|\nabla \tilde g|^2 - \frac{R^2+Z^2}{16}\tilde g^2 +
  \frac12 \tilde g^2\Bigr)\dd R\dd Z\,.
\end{equation}
The operator $L$ is related to the quantum harmonic oscillator in
$\R^2$. With the normalization above, it is self-adjoint in
$L^2(\R^2)$ with spectrum $\sigma(L) = \{-n/2\,|\, n = 0,1,2\dots\}$,
and this observation already implies that $D_1 \le 0$. Moreover, the
kernel of $L$ is one-dimensional and spanned by the function
$w^{1/2}G$.  As a consequence, if $\tilde f$ has zero mean over
$\R^2$, then $\tilde g = w^{1/2} \tilde f$ is orthogonal to $w^{1/2}G$
in $L^2(\R^2)$, hence belongs to the invariant subspace where
$L \le -1/2$. Thus
\begin{equation}\label{D3}
  D_1 \,=\, \int \tilde f (\cL\tilde f) w \dd R \dd Z \,\le\,
  -\frac12 \int \tilde f^2 w \dd R\dd Z\,, \qquad \hbox{if}\quad
  \int \tilde f \dd R\dd Z \,=\, 0\,.
\end{equation}
In the general case, we can decompose $\tilde f = m(t)G + \hat f$,
so that $\hat f$ has zero mean by construction. As $\cL G = 0$,
we have $\int \tilde f (\cL\tilde f) w \dd R \dd Z = \int
\hat f (\cL\hat f) w \dd R \dd Z$ and applying \eqref{D3} to
$\hat f$ we obtain
\begin{equation}\label{D4}
   D_1 \,=\, \int \hat f (\cL\hat f) w \dd R \dd Z
  \,\le\, -\frac12 \int \hat f^2 w \dd R\dd Z \,=\,
  -\frac12 \int \tilde f^2 w \dd R\dd Z + \frac{m(t)^2}{4\pi}\,.
\end{equation}
We now take a convex combination of estimates \eqref{D1}, \eqref{D2},
and \eqref{D4}, for instance with coefficients $1/6$, $1/6$, and $2/3$.
This gives our improved bound
\begin{equation}\label{D5}
  D_1 \,\le\, -\int\Bigl(\frac16 |\nabla \tilde f|^2 +
  \frac{R^2+Z^2}{96}\tilde f^2 + \frac{1}{12}\tilde f^2\Bigr)
  w \dd R\dd Z + \frac{m(t)^2}{6\pi} \,\le\, -\frac{\cE}{48}
  + \frac{m(t)^2}{6\pi}\,.
\end{equation}

Next, we consider the second diffusive term $D_2$. Integrating
by parts and using the fact that  $\partial_R w = R w/2$, we
find
\begin{align}\nonumber
  D_2 \,&=\, -\frac{\epsilon}{2}\int \Bigl(\frac{\tilde f}{1+
  \epsilon R}\Bigr)^2 \partial_R\bigl((1{+}\epsilon R)w\bigr)
  \dd R \dd Z \\ \label{D6}
  \,&=\, -\frac{\epsilon^2}{2}\int \Bigl(\frac{\tilde f}{1+
  \epsilon R}\Bigr)^2 w \dd R \dd Z - \frac{\epsilon}{4}
  \int \Bigl(\frac{\tilde f^2}{1+\epsilon R}\Bigr) R w\dd R \dd Z\,.
\end{align}
The last term in \eqref{D6} has no sign, but is obviously harmless
when $1 + \epsilon R \ge 1/4$. In the subdomain $\tilde\Omega_\epsilon
= \{(R,Z) \,|\, 0 < 1+\epsilon R < 1/4\}$, we can apply Young's
inequality to obtain
\begin{equation}\label{D7}
  \frac{\epsilon}{4} \int_{\tilde \Omega_\epsilon} \Bigl(\frac{\tilde f^2}{1
  +\epsilon R}\Bigr) |R| w\dd R \dd Z \,\le\, \frac{\epsilon^2}{2}
  \int_{\tilde \Omega_\epsilon} \Bigl(\frac{\tilde f}{1+\epsilon R}\Bigr)^2 w
  \dd R \dd Z + \frac{1}{32} \int_{\tilde \Omega_\epsilon} f^2
  R^2 w \dd R \dd Z\,,
\end{equation}
where we replaced $\tilde f$ by $f$ in the last integrand
because $f_0$ vanishes identically in $\tilde \Omega_\epsilon$.
Using the upper bound \eqref{Gaussf} with (for instance)
$\eta = 1/4$, we see that the last integral in \eqref{D7}
is transcendentally small. Summarizing, we have shown that
\begin{equation}\label{D8}
  D_2 \,\le\, \epsilon\int \tilde f^2 |R| w \dd R \dd Z +
  \frac{C}{\epsilon^2}\,e^{-1/(16\epsilon^2)}\,,
\end{equation}
where the constant $C > 0$ depends only on $M = \gamma$. Note that the
first term in the right-hand side of \eqref{D8} is bounded by $C
\epsilon E^{1/2}\cE^{1/2} \le C\epsilon \cE$, and can therefore be controlled
by the negative terms in \eqref{D5}, if $\epsilon$ is small enough.

\medskip\noindent{\bf Step 3: }{\it The source term}. We turn our
attention to the source term $\cH$ defined in \eqref{HHdef}. We
claim that
\begin{equation}\label{H1}
  \|\cH(t)\|_{X} \,\le\, C \epsilon + C\gamma \epsilon|\log \epsilon|\,,
\end{equation}
whenever $\epsilon \le 1/2$, where $C > 0$ is a universal constant.
To prove \eqref{H1} we consider separately the various terms in
\eqref{HHdef}. First, as $\partial_t G = \cL G = 0$, it is
straightforward to verify that both quantities $t\partial_t f_0$ and
$\cL f_0$ are transcendentally small in $X$ as $\epsilon \to 0$. Next,
since $1 + \epsilon R$ is bounded away from zero on the support of
$f_0$, it is clear that the second-term in the right-hand side of
\eqref{HHdef} is $\cO(\epsilon)$ in $X$.  So the main contribution
comes from the last term $\gamma\div_* \bigl(U_0^\epsilon f_0\bigr)$,
which requires a more careful analysis.  We recall that $U_0^\epsilon
= \BS^\epsilon[f_0]$ is the velocity field obtained from the vorticity
$f_0$ via the $\epsilon$-dependent Biot-Savart law
\eqref{Ueps-rep}. As $U_0^\epsilon$ satisfies the divergence-free
condition in \eqref{U-BS}, we have the identity
\begin{equation}\label{H2}
  \div_*\bigl(U_0^\epsilon f_0\bigr) \,=\, U_0^\epsilon \cdot \nabla
  f_0 + \div_*(U_0^\epsilon) f_0 \,=\, (U_0^\epsilon - U_0^0) \cdot \nabla
  f_0 - \frac{\epsilon U_{0,r}^\epsilon}{1+\epsilon R}f_0\,,
\end{equation}
where $U_0^0 = \BS^0[f_0]$ denotes the velocity field obtained from
$f_0$ via the two-dimensional Biot-Savart law \eqref{BS2D}. Note that
$U_0^0 \cdot \nabla f_0 = 0$, because $f_0$ is radially symmetric in
$\R^2$, but we included that term in \eqref{H2} so that the right-hand
side contains the difference $U_0^\epsilon - U_0^0$.  Again, since $1
+ \epsilon R$ is bounded away from zero on the support of $f_0$ and
$U_0^\epsilon$ is uniformly bounded by \eqref{unif-Ueps}, the last
term in \eqref{H2} is $\cO(\epsilon)$ in $X$. As for the first term,
we deduce from \eqref{Udiffest} that
\begin{equation}\label{H3}
  |U_0^\epsilon(R,Z,t) - U_0^0(R,Z,t)| \,\le\, C \frac{\epsilon}{1+
  \epsilon R}\Bigl(1 + \logp\frac{1+\epsilon R}{\epsilon}
  \Bigr)\,,
\end{equation}
and it follows easily that $\|(U_0^\epsilon - U_0^0) \cdot \nabla
f_0\|_X \le C \epsilon|\log \epsilon|$ if $\epsilon$ is small
enough. This concludes the proof of \eqref{H1}, and we deduce
that
\begin{equation}\label{H7}
   H \,=\, \int \tilde f \,\cH w\dd R \dd Z\, \,\le\,
   C E^{1/2}\Bigl(\epsilon + \gamma \epsilon|\log \epsilon|
  \Bigr)\,,
\end{equation}
whenever $\epsilon \le 1/2$.

\medskip\noindent{\bf Step 4: }{\it The advection terms}. We now
consider the terms produced by the advection operator $\tilde f
\mapsto \div_*(U_0^\epsilon \tilde f)$ and the nonlocal operator
$\tilde f \mapsto \div_*(\tilde U^\epsilon f_0)$ in \eqref{tildefeq}.
Integrating by parts, we find
\begin{equation}\label{A1}
  A_1 \,=\, \int \tilde f \,\div_* \bigl(U_0^\epsilon \tilde f
  \bigr)w\dd R \dd Z \,=\, \frac12 \int \tilde f^2\Bigl((\div_*
  U_0^\epsilon)w - \bigl((U_0^\epsilon - U_0^0)\cdot \nabla w\bigr)\Bigr)
  \dd R\dd Z\,,
\end{equation}
where $U_0^0 = \BS^0[f_0]$. Note that $U_0^0 \cdot \nabla w = 0$
because $w$ is radially symmetric, but it is useful to make the
difference $U_0^\epsilon - U_0^0$ appear in \eqref{A1}. In the region
where $1+\epsilon R \ge 1/4$, the integrand in the last member of
\eqref{A1} can be estimated using \eqref{H3} and the bound $|\div_*
U_0^\epsilon| \le C\epsilon(1+\epsilon R)^{-1}$, which follows from
the identity
\[
  \div_* U_0^\epsilon \,=\, -\frac{\epsilon U_{0,r}^\epsilon}{(1
  +\epsilon R)}\,.
\]
If $1 + \epsilon R < 1/4$, namely if $(R,Z) \in \tilde \Omega_\epsilon$,
we simply use the upper bound \eqref{Gaussf} for $f = \tilde f$ together
with uniform estimates on $U_0^\epsilon$ and $U_0^0$, which can be deduced
from \eqref{Uepsbound}. We thus obtain, whenever $\epsilon \le 1/2$,
\begin{align}\nonumber
  |A_1| \,&\le\, \int_{\Omega_\epsilon\setminus\tilde \Omega_\epsilon} \frac{C\epsilon}{1
  {+}\epsilon R}\Bigl(1 + \logp\frac{1{+}\epsilon R}{\epsilon}\Bigr)
  \tilde f^2 (1{+}|R|{+}|Z|)w \dd R\dd Z + \int_{\tilde\Omega_\epsilon}
  f^2 (1{+}|R|{+}|Z|)w\dd R\dd Z \\ \nonumber
  \,&\le\, C\epsilon|\log\epsilon| \int \tilde f^2 (1{+}|R|{+}|Z|)w\dd R\dd Z
  + \frac{C}{\epsilon}\,e^{-1/(16\epsilon^2)} \\ \label{A2}
  \,&\le\, C\epsilon|\log\epsilon| \,E^{1/2}\cE^{1/2} +
  \frac{C}{\epsilon}\,e^{-1/(16\epsilon^2)}\,.
\end{align}
Note that the quantity $C\epsilon|\log\epsilon| \,E^{1/2}\cE^{1/2}$ can be 
controlled by the negative terms in \eqref{D5}, if $\epsilon$ is sufficiently 
small.

As for the nonlocal term $A_2$, we observe that
\begin{equation}\label{A3}
   A_2 \,=\, \int \tilde f \,\div_* \bigl([\tilde U^\epsilon -
  \tilde U^0]f_0\bigr)w\dd R \dd Z + \int \tilde f\,
  (\tilde U^0 \cdot \nabla [f_0 - G]) w\dd R \dd Z\,,
\end{equation}
where $\tilde U^0 = \BS^0[\tilde f]$ is the velocity field obtained
from the vorticity $\tilde f$ via the two-dimensional Biot-Savart
law \eqref{BS2D}. In deriving \eqref{A3} we used the nontrivial
observation
\[
  \int \tilde f\,\bigl(\tilde U^0 \cdot \nabla G\bigr) w\dd R \dd Z
  \,\equiv\, \int_{\R^2} \tilde f\,\bigl(\BS^0[\tilde f]\cdot \nabla G\bigr)
  w\dd R \dd Z \,=\, 0\,,
\]
which was first made in \cite[Lemma~4.8]{GW2}. Let $A_{21}$ denote the
first term in the right-hand side of \eqref{A3}. Integrating by
parts, we find
\begin{equation}\label{A4}
  A_{21} \,=\, -\int f_0\, \bigl(\tilde U^\epsilon - \tilde U^0\bigr)
  \cdot \nabla (\tilde f w) \dd R \dd Z\,.
\end{equation}
Note once again that $f_0$ is supported in the region where $1 +
\epsilon R \ge 1/4$, and in that domain we infer from \eqref{Udiffest}
that $|\tilde U^\epsilon - \tilde U^0| \le C\epsilon|\log\epsilon|
\|\tilde f\|_{L^1\cap L^2}$. Using H\"older's inequality and the continuous
injection $X \hookrightarrow L^1(\R^2) \cap L^2(\R^2)$, we deduce that
\begin{align}\nonumber
  |A_{21}| \,&\le\,  C\epsilon|\log\epsilon| \|\tilde f\|_X
  \int \Bigl(|\nabla\tilde f| + (|R|^2+|Z^2|)^{1/2}|\tilde f|
  \Bigr)\dd R\dd Z\, \\ \nonumber
  \,&\le\, C\epsilon|\log\epsilon| E^{1/2} \biggl\{\int
  \Bigl(|\nabla\tilde f|^2 + (|R|^2+|Z^2|)|\tilde f|^2\Bigr)
  w\dd R\dd Z\, \biggr\}^{1/2} \\ \label{A5}
  \,&\le\, C\epsilon|\log\epsilon| E^{1/2} \cE^{1/2}\,.
\end{align}
Again the right-hand side can be controlled by the negative terms
in \eqref{D5} if $\epsilon$ is sufficiently small.

Finally, let $A_{22}$ denote the last integral term in \eqref{A3}.
Here the integral is taken over the domain $\hat \Omega_\epsilon =
\{(R,Z)\,|\, \epsilon^2(R^2+Z^2) \ge 1/4\}$, because $f_0 = G$
on $\Omega_\epsilon \setminus \hat \Omega_\epsilon$. Using H\"odler's
inequality, we obtain
\[
  [A_{22}| \,\le\, C \int_{\hat\Omega_\epsilon} |\tilde U^0|
  |\tilde f| (R^2{+}Z^2)^{1/2} \dd R\dd Z \,\le\,
  C \|\tilde U^0\|_{L^4} \biggl\{\int_{\hat\Omega_\epsilon}
  |\tilde f|^{4/3} (R^2{+}Z^2)^{2/3}\dd R\dd Z\biggr\}^{3/4}\,.
\]
As $\tilde U^0$ is the velocity field obtained from $\tilde f$ via the
two-dimensional Biot-Savart law, the Hardy-Littlewood-Sobolev inequality
implies that $\|\tilde U^0\|_{L^4} \le C \|\tilde f\|_{L^{4/3}} \le C
\|\tilde f\|_X$. On the other hand, using H\"older's inequality
again, we find
\[
  \int_{\hat\Omega_\epsilon}|\tilde f|^{4/3} (R^2{+}Z^2)^{2/3}\dd R\dd Z
  \,\le\, \biggl\{\int |\tilde f|^2 w\dd R\dd Z\biggr\}^{2/3}
  \biggl\{\int_{\hat\Omega_\epsilon} (R^2{+}Z^2)\frac{1}{w^2}
  \dd R\dd Z\biggr\}^{1/3}\,,
\]
where the last integral can be explicitly computed and is
found to be transcendentally small as $\epsilon \to 0$.
Altogether, we have shown that
\begin{equation}\label{A6}
  |A_{22}| \,\le\, \frac{C}{\epsilon^{1/2}}\,e^{-1/(32\epsilon^2)}\,
  \|\tilde U^0\|_{L^4}  \|\tilde f\|_X \,\le\, \frac{C}{\epsilon^{1/2}}
  \,e^{-1/(32\epsilon^2)} \|\tilde f\|_X^2\,\,.
\end{equation}

\medskip\noindent{\bf Step 5: }{\it The nonlinear term}.
Finally we consider the nonlinear term $N$ is \eqref{Ediff}.
Integrating by part, we find
\[
  N(t) \,=\, -\int \tilde f\,\tilde U^\epsilon \cdot \bigl(w \nabla
  \tilde f + \tilde f \nabla w\bigr)\dd R \dd Z\,,
\]
so that
\[
  |N(t)| \,\le\, C \int |\tilde U^\epsilon|\, \bigl(|\tilde f|
  w^{1/2}\bigr)\,\Bigl(\bigl(|\nabla\tilde f| + (R^2{+}Z^2)^{1/2}
  |\tilde f|\bigr)w^{1/2}\Bigr)\dd R \dd Z\,.
\]
We apply the trilinear H\"older inequality to the right-hand side,
with exponents $4$, $4$, and $2$. Since $\tilde U^\epsilon =
\BS^\epsilon[\tilde f]$, it follows from \eqref{Uepsbound} (using
again the Hardy-Littlewood-Sobolev inequality) that $\|\tilde
U^\epsilon\|_{L^4} \le C\|\tilde f\|_{L^{4/3}} \le C\|\tilde
f\|_X$. On the other hand, using Sobolev's interpolation
inequality, we see that
\[
  \|\tilde f w^{1/2}\|_{L^4}^2 \,\le\, C  \|\tilde f w^{1/2}\|_{L^2}
  \|\nabla(\tilde f w^{1/2})\|_{L^2} \,\le\, C E^{1/2}\cE^{1/2}\,.
\]
Finally,
\[
  \|\bigl(|\nabla\tilde f| + (R^2{+}Z^2)^{1/2}|\tilde f|\bigr)w^{1/2}
  \|_{L^2} \,\le\, C \cE^{1/2}\,.
\]
Altogether, we have shown that
\begin{equation}\label{N1}
  |N| \,\le\, C E^{3/4} \cE^{3/4} \,\le\, C E^{1/2}\cE~.
\end{equation}
Alternatively, one can apply the trilinear H\"older inequality 
with exponents $\infty$, $2$, $2$, and deduce from \eqref{Uepsbound} 
that $\|\tilde U^\epsilon\|_{L^\infty} \le C \|\tilde f\|_{L^{4/3}}^{1/2}
\|\tilde f\|_{L^4}^{1/2} \le C E^{1/4}\cE^{1/4}$. This also leads 
to \eqref{N1}. 

\medskip\noindent{\bf Step 6: }{\it Conclusion}.
Combining estimates \eqref{massbound}, \eqref{D5}, \eqref{D8},
\eqref{H7}, \eqref{A2}, \eqref{A5}, \eqref{A6}, and \eqref{N1},
we obtain \eqref{Ediff}.
\end{proof}

\begin{proof}[\bf Proof of estimate \eqref{short-time} in
Theorem~\ref{main}.] We know from Proposition~\ref{alpha-prop}
that $f(t)$ converges to $G$ in $X$ as $t \to 0$, and so does
$f_0(t)$ in view of definition \eqref{f0-def}. Thus $E(t) \to
0$ as $t \to 0$. As long as $t$ is small enough so that
$\epsilon \le \epsilon_0$ and $\kappa E(t)^{1/2} \le \delta/2$,
it follows from \eqref{energy-decay} and Young's inequality
that
\begin{equation}\label{ener1}
  tE'(t) \,\le\, -\delta \cE(t) + \cR_1(t) \,\le\,
  -\delta E(t) + \cR_1(t)\,,
\end{equation}
where $\cR_1 \le C \epsilon^2 |\log \epsilon|^2$.
Integrating that differential inequality, we obtain
\begin{equation}\label{ener2}
  E(t) \,\le\, t^{-\delta}\int_0^t s^{\delta-1} \cR_1(s)\dd s
 \,=:\, \cR_2(t)\,,
\end{equation}
where again $\cR_2 \le C \epsilon^2 |\log \epsilon|^2$.
From that bound, we see that there exists $\epsilon_1 \in (0,
\epsilon_0)$ such that our assumption $\kappa E^{1/2} \le \delta/2$
is satisfied whenever $\epsilon \le \epsilon_1$. So, for $\epsilon
\le \epsilon_1$, we have
\[
  \|f(t) - f_0(t)\|_{L^1(\Omega_\epsilon)} \,=\, \|\tilde
  f(t)\|_{L^1(\Omega_\epsilon)} \,\le\, C \|\tilde f(t)\|_X \,\le\,
  C E(t)^{1/2} \,\le\, C \epsilon |\log \epsilon|\,,
\]
and since $f_0$ is extremely close to $G$ this proves exactly
\eqref{short-time}, after returning to the original variables. When
$\epsilon_1 \le \epsilon \le 1/2$, estimate \eqref{short-time}
obviously holds (for some appropriate constant $C_1$), because the
left-hand side is trivially smaller than $2|\Gamma|$.
\end{proof}

\subsection{Uniqueness}\label{sec45}

This final section is devoted to the uniqueness claim in Theorem~\ref{main}.
Assume for this purpose that $\omega_\theta^{(1)}, \omega_\theta^{(2)} \in
C^0((0,T),L^1(\Omega) \cap L^\infty(\Omega))$ are two mild solutions of
equation~\eqref{omeq} which are uniformly bounded in $L^1(\Omega)$
and converge weakly to $\Gamma \,\delta_{(\bar r,\bar z)}$ as $t \to 0$.
Introducing self-similar variables as in \eqref{f-def}, we obtained
two rescaled vorticities $f_1(R,Z,t), f_2(R,Z,t)$ which can both
be decomposed as in \eqref{fUdecomp}:
\[
  f_1(R,Z,t) \,=\, f_0(R,Z,t) + \tilde f_1(R,Z,t)\,, \qquad
  f_2(R,Z,t) \,=\, f_0(R,Z,t) + \tilde f_2(R,Z,t)\,.
\]
The associated velocity fields are decomposed in a similar way:
\[
  U_1^\epsilon(R,Z,t) \,=\, U_0^\epsilon(R,Z,t) + \tilde U_1^\epsilon(R,Z,t)\,,
  \qquad
  U_2^\epsilon(R,Z,t) \,=\, U_0^\epsilon(R,Z,t) + \tilde U_2^\epsilon(R,Z,t)\,.
\]
We take the difference of both solutions and denote
\begin{align*}
  \tilde f(R,Z,t) \,&=\, f_1(R,Z,t) - f_2(R,Z,t) \,=\,
  \tilde f_1(R,Z,t) - \tilde f_2(R,Z,t)\,, \\
  \tilde U^\epsilon(R,Z,t) \,&=\, U_1^\epsilon(R,Z,t) - U_2^\epsilon(R,Z,t)
  \,=\, \tilde U_1^\epsilon(R,Z,t) - \tilde U_2^\epsilon(R,Z,t)\,.
\end{align*}
The evolution equation for $\tilde f$ reads
\begin{equation}\label{tildefeq2}
  t \partial_t \tilde f + \gamma \div_* \bigl(U_0^\epsilon \tilde f +
  \tilde U^\epsilon f_0\bigr) + \gamma\div_* \bigl(\tilde U^\epsilon
  \tilde f_1 + \tilde U_2^\epsilon\tilde f \bigr) \,=\, \cL \tilde f
  + \partial_R \Bigl(\frac{\epsilon\tilde f}{1+\epsilon R}\Bigr)\,.
\end{equation}
This is basically the same equation as \eqref{tildefeq}, except that
the source term $\cH$ has disappeared when taking the difference of
the equations for $\tilde f_1$ and $\tilde f_2$, and the nonlinear
term has been expanded as follows: $\tilde U_1^\epsilon \tilde f_1 -
\tilde U_2^\epsilon \tilde f_2 = \bigl(\tilde U_1^\epsilon -
\tilde U_2^\epsilon\bigr) \tilde f_1 + \tilde U_2^\epsilon
\bigl(\tilde f_1 - \tilde f_2\bigr)$. In analogy with \eqref{Edef}
we denote
\[
  E\,=\, \frac12 \int \tilde f^2 w\dd R\dd Z\,, \qquad
  E_1\,=\, \frac12 \int \tilde f_1^2 w\dd R\dd Z\,, \qquad
  E_2\,=\, \frac12 \int \tilde f_2^2 w\dd R\dd Z\,,
\]
and as in \eqref{EEdef} we also define
\[
  \cE \,=\, \frac12 \Bigl(|\nabla \tilde f|^2 +
  (1+R^2+Z^2)\tilde f^2\Bigr)w \dd R\dd Z\,.
\]

Now, repeating the proof of Proposition~\ref{energy-prop}, we first obtain
the estimate
\begin{equation}\label{energy-decay2}
  tE'(t) \,\le\, -2\delta \cE(t) + \kappa \bigl(E_1(t)^{1/2} + E_2(t)^{1/2}
  \bigr) \cE(t) + \tilde\cR(t)\,,
\end{equation}
which holds for $\epsilon \le \epsilon_0 \le 1/2$ with a remainder
term satisfying $\tilde\cR(t) \le e^{-1/(36\epsilon^2)}$. Note that
\eqref{energy-decay2} does not include the term $\kappa \epsilon |\log
\epsilon| E(t)^{1/2}$ which, in the corresponding estimate
\eqref{energy-decay}, was produced by the source term $\cH$.
As long as $t$ is small enough so that $\kappa (E_1(t)^{1/2} +
E_2(t)^{1/2}) \le \delta$, it follows from \eqref{energy-decay2}
that $tE'(t) \le -\delta E(t) + \tilde\cR(t)$, hence
\begin{equation}\label{firstbdd}
  E(t) \,\le\, t^{-\delta}\int_0^t s^{\delta-1} \tilde\cR(s)\dd s
  \,=\, \cO\Bigl(e^{-1/(36\epsilon^2)}\Bigr)\,.
\end{equation}
This already shows that $E(t)$ converges extremely rapidly to
zero as as $t \to 0$, but our actual goal is to prove that $E(t)$
vanishes identically.

To do that, we combine \eqref{energy-decay2} with another estimate,
which is less sophisticated and easier to establish. As long as
$\epsilon \le 1/2$, we claim that
\begin{equation}\label{energy-decay3}
  tE'(t) \,\le\, -\delta \cE(t) + K E(t) + \kappa \bigl(E_1(t)^{1/2}
  + E_2(t)^{1/2}\bigr) \cE(t)\,,
\end{equation}
for some positive constants $K$ and $\kappa$ (depending on $\gamma$).
Note that there is no ``inhomogeneous'' term $\tilde\cR(t)$ in
\eqref{energy-decay3}, but this is obtained at the expense of
including the positive term $K E(t)$ with a (possibly large)
constant $K$. To obtain \eqref{energy-decay3}, the main modification
in the proof of Proposition~\ref{energy-prop} concerns the diffusive
terms $D_1$ and $D_2$. To bound $D_1$ we forget about \eqref{D4}
and only take a convex combination of \eqref{D1}, \eqref{D2},
with coefficients $1/3$ and $2/3$. The result is
\begin{equation}\label{D1new}
  D_1 \,\le\, -\int \Bigl(\frac13 |\nabla\tilde f|^2 + \frac{R^2{+}Z^2}{24}
  \tilde f^2 + \frac13 \tilde f^2\Bigr)\dd R\dd Z +
  \int \tilde f^2 w \dd R\dd Z\,.
\end{equation}
As for $D_2$, we use estimate \eqref{D7} on the whole domain
$\Omega_\epsilon$ and add it to \eqref{D6}, which gives
\begin{equation}\label{D2new}
   D_2 \,\le\, \frac{1}{32} \int \tilde f^2 R^2 w \dd R \dd Z\,.
\end{equation}
When taking the sum $D_1 + D_2$, we observe that the right-hand side
of \eqref{D2new} is entirely absorbed in the negative terms that
appear in \eqref{D1new}. On the other hand, the advection terms $A_1$
and $A_2$ can be bounded in a crude way, because we do not need to use
subtle cancellations to produce a factor of $\epsilon$. For instance,
one can verify using \eqref{Uepsbound}, \eqref{Ueps-rep} that the 
quantities $U_0^\epsilon$ and $\div_*(U_0^\epsilon)$ are bounded on $\Omega_\epsilon$, 
hence it follows from \eqref{A1} that $|A_1| \le C \int \tilde f^2 \dd R\dd Z$ 
for some positive constant $C$. In view of \eqref{A5} and \eqref{A6}, 
a similar estimate holds for $A_2$, too. Finally the nonlinear terms are 
treated as in \eqref{N1}, and we arrive at \eqref{energy-decay3}.

Now, whenever $t$ is small enough so that $\kappa (E_1(t)^{1/2} + E_2(t)^{1/2})
\le \delta$, it follows from \eqref{energy-decay3} that $tE'(t) \le
K E(t)$, hence
\begin{equation}\label{secondbdd}
  E(t) \,\le\, \Bigl(\frac{t}{t_0}\Bigr)^K E(t_0)\,, \qquad 0 < t_0 < t\,.
\end{equation}
In view of \eqref{firstbdd}, the right-hand side of \eqref{secondbdd}
converges to $0$ as $t_0 \to 0$. Thus $E(t) = 0$, and we deduce that
$f_1(t) = f_2(t)$ for all sufficiently small times. Returning to
the original variables, we conclude that
\[
  \omega_\theta^{(1)}(r,z,t) \,=\, \omega_\theta^{(2)}(r,z,t)\,,
\]
for sufficiently small times, hence for all $t \in (0,T)$ in
view of the well-posedness result established in \cite[Theorem~1.1]{GS}.
The proof of Theorem~\ref{main} is now complete. \QED

\section{Appendix}\label{sec5}

\subsection{Convergence of signed measures}
\label{sec51}

For easy reference, we collect here a few remarks on weak convergence
of signed measures. The content of this section is probably standard,
although most of the classical literature is devoted to the particular
case of probability measures. We state the results in a general
framework, but in the rest of the paper all measures are defined
on the half-plane $\Omega \subset \R^2$. We first recall a few definitions.

\smallskip\noindent{\bf 1.}
Given a locally compact metric space
$X$, we denote by $C_0(X)$ the space of all continuous functions
$f : X \to \R$ that vanish at infinity in the following sense:
for any $\epsilon > 0$, there exists a compact set $K \subset X$
such that $|f(x)| \le \epsilon$ for all $x \in K^c := X\setminus K$.
Equipped with the supremum norm, $C_0(X)$ is a real Banach space.

\smallskip\noindent{\bf 2.}
Let $\cM(X)$ be the set of all finite, signed, regular Borel measures
on $X$. If $\mu \in X$, we denote by $|\mu|$ the total variation of
$\mu$ \cite{Ru}, which is a nonnegative finite Borel measure on
$X$. The total variation norm of $\mu$ is the real number $\|\mu\|
= |\mu|(X) \ge 0$. Equipped with the total variation norm, the
space $\cM(X)$ becomes a real Banach space.

\smallskip\noindent{\bf 3.}
By the Riesz-Markov theorem \cite{Ru}, if $\Phi : C_0(X) \to \R$ is
any continuous linear functional, there exists a unique measure $\mu
\in \cM(X)$ such that
\begin{equation}\label{Riesz-Markov}
  \Phi(f) \,=\, \int_X f \dd\mu\,, \qquad \hbox{for all }
  f \in C_0(X)\,.
\end{equation}
Moreover the total variation norm $\|\mu\|$ is precisely the norm of
the linear functional $\Phi$. The space $\cM(X)$ can thus be
identified via~\eqref{Riesz-Markov} to the topological dual
$C_0(X)'$.

\smallskip\noindent{\bf 4.}
If $(\mu_n)$ is a sequence in $\cM(X)$, we say that $\mu_n$
{\em converges weakly} to $\mu \in \cM(X)$ if
\begin{equation}\label{weak-def}
  \lim_{n \to \infty} \int_X f \dd\mu_n \,=\, \int_X f \dd\mu\,,
  \qquad \hbox{for all } f \in C_0(X)\,.
\end{equation}
We write $\mu_n \weakto \mu$ as $n \to \infty$. This notion
coincides with the weak-$*$ convergence in $\cM(X) \simeq
C_0(X)'$. We always have
\[
  \|\mu\| \,\le\, \liminf_{n \to \infty} \|\mu_n\|\,.
\]

\smallskip\noindent{\bf 5.} A family of measures $\cF \subset \cM(X)$
is {\em tight} if, for any $\epsilon > 0$, there exists a compact set
$K \subset X$ such that $|\mu|(K^c) \le \epsilon$ for all $\mu \in
\cF$. Any singleton $\{\mu\}$ is necessarily tight, because the
measure $\mu \in \cM(X)$ is inner regular. If $(\mu_n)$ is a tight
sequence in $\cM(X)$ that converges weakly to $\mu \in \cM(X)$,
the convergence in~\eqref{weak-def} holds for all bounded and
continuous functions $f : X \to \R$, and not only for all
$f \in C_0(X)$. This is the case, for instance, if $(\mu_n)$
is a sequence of probability measures that converges to a
probability measure $\mu$.

The main purpose of this section is to state the following
basic result:

\begin{prop}\label{weak-prop}
Let $(\mu_n)$ be a sequence in $\cM(X)$, and let $\mu \in \cM(X)$.
We assume that
\[
  \mu_n \,\weakto\, \mu \quad \hbox{and}\quad
  \|\mu_n\| \,\to\, \|\mu\|\,, \qquad \hbox{as }\,n \to \infty\,.
\]
Then $|\mu_n| \weakto |\mu|$ as $n \to \infty$, and the sequence
$(|\mu_n|)$ is tight.
\end{prop}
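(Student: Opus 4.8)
The plan is to prove the two assertions in turn, the weak convergence $|\mu_n| \weakto |\mu|$ being the substantial part. First I would reduce the convergence statement to identifying the weak-$*$ limit points of the sequence $(|\mu_n|)$. Since $\|\mu_n\| = |\mu_n|(X) \to \|\mu\|$, the sequence is bounded in $\cM(X) \simeq C_0(X)'$, so by the Banach--Alaoglu theorem (together with the separability of $C_0(X)$, which holds in our setting $X = \Omega \subset \R^2$ and makes bounded sets weak-$*$ sequentially compact) every subsequence of $(|\mu_n|)$ has a further subsequence $(|\mu_{n_k}|)$ converging weak-$*$ to some $\lambda \in \cM(X)$. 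Such a limit is a nonnegative measure, because $\int_X g \dd\lambda = \lim_k \int_X g \dd|\mu_{n_k}| \ge 0$ for every $g \in C_0(X)$ with $g \ge 0$. If I can show that necessarily $\lambda = |\mu|$, then by the standard subsequence principle (if every subsequence has a sub-subsequence converging to the same limit, the whole sequence converges) I conclude $|\mu_n| \weakto |\mu|$.

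The heart of the matter is the inequality $|\mu| \le \lambda$, which I would obtain from the dual characterization
\[
  \int_X g \dd|\mu| \,=\, \sup\Bigl\{ \int_X f \dd\mu \,\Big|\, f \in C_0(X),~ |f| \le g\Bigr\}\,,
\]
valid for every nonnegative $g \in C_0(X)$. Granting this, for any admissible $f$ one has $\bigl|\int_X f \dd\mu_n\bigr| \le \int_X |f| \dd|\mu_n| \le \int_X g \dd|\mu_n|$, and passing to the limit along the chosen subsequence (using $f,g \in C_0(X)$ together with $\mu_{n_k} \weakto \mu$ and $|\mu_{n_k}| \weakto \lambda$) gives $\int_X f \dd\mu \le \int_X g \dd\lambda$; taking the supremum over $f$ yields $\int_X g \dd|\mu| \le \int_X g \dd\lambda$ for all such $g$, whence $|\mu| \le \lambda$ as measures. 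Establishing the dual characterization is the one genuinely nontrivial step, and I expect it to be the main obstacle: writing the Hahn--Jordan decomposition $\dd\mu = h \dd|\mu|$ with $h \in \{-1,+1\}$, the optimal but merely measurable choice $f = gh$ must be replaced by a continuous one. I would invoke Lusin's theorem for the finite regular measure $|\mu|$ to produce, for any $\delta > 0$, a continuous $\psi$ with $|\psi| \le 1$ and $\psi = h$ off a set $E$ with $|\mu|(E) < \delta$; then $f = g\psi \in C_0(X)$ satisfies $|f| \le g$ and
\[
  \Bigl|\int_X g \dd|\mu| - \int_X f \dd\mu\Bigr| \,=\, \Bigl|\int_E g(1 - \psi h)\dd|\mu|\Bigr| \,\le\, 2\|g\|_{L^\infty}\,|\mu|(E) \,<\, 2\|g\|_{L^\infty}\,\delta\,,
\]
since $1 - \psi h = 1 - h^2 = 0$ where $\psi = h$. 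As $\delta$ is arbitrary this gives the nontrivial ``$\ge$'' inequality in the characterization; the reverse is immediate from $|f| \le g$.

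To upgrade $|\mu| \le \lambda$ to an equality I would compare total masses. On the one hand $|\mu|(X) \le \lambda(X)$ by the inequality just proved. On the other hand, lower semicontinuity of the mass under weak-$*$ convergence of nonnegative measures (obtained by testing against $g \in C_c(X)$ with $0 \le g \le 1$ and taking the supremum) gives $\lambda(X) \le \liminf_k |\mu_{n_k}|(X) = \liminf_k \|\mu_{n_k}\| = \|\mu\| = |\mu|(X)$, where the hypothesis $\|\mu_n\| \to \|\mu\|$ is used. Hence $\lambda(X) = |\mu|(X)$, and since $\lambda - |\mu|$ is then a nonnegative measure of total mass zero it vanishes, so $\lambda = |\mu|$. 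As explained above this yields $|\mu_n| \weakto |\mu|$, and in particular $|\mu_n|(X) \to |\mu|(X)$.

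Finally, for tightness I would exploit that the finite measure $|\mu|$ is itself inner regular: given $\varepsilon > 0$ choose a compact $K_0$ with $|\mu|(X \setminus K_0) < \varepsilon/2$, and by local compactness and Urysohn's lemma a function $g \in C_c(X)$ with $0 \le g \le 1$, $g \equiv 1$ on $K_0$, and compact support $K_1$. Then $|\mu_n|(X \setminus K_1) \le |\mu_n|(X) - \int_X g \dd|\mu_n|$, and letting $n \to \infty$ (using the full convergence $|\mu_n| \weakto |\mu|$ and $|\mu_n|(X) \to |\mu|(X)$) gives $\limsup_n |\mu_n|(X \setminus K_1) \le |\mu|(X) - \int_X g \dd|\mu| \le |\mu|(X \setminus K_0) < \varepsilon/2$. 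Thus $|\mu_n|(X \setminus K_1) < \varepsilon$ for all $n$ beyond some $N$, and enlarging $K_1$ by compacta capturing the mass of the finitely many measures $|\mu_1|,\dots,|\mu_N|$ (each individually tight) yields a single compact $K$ with $\sup_n |\mu_n|(X \setminus K) \le \varepsilon$. This is exactly the tightness of $(|\mu_n|)$, completing the proof.
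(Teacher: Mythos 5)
Your proof is correct, and it follows a genuinely different route from the paper's. The paper first normalizes, setting $\tilde\mu_n = \mu_n/\|\mu_n\|$ and $\tilde\mu = \mu/\|\mu\|$ so that $|\tilde\mu_n|$, $|\tilde\mu|$ become probability measures; it then proves the single inequality $|\tilde\mu|(U) \le \liminf_{n\to\infty}|\tilde\mu_n|(U)$ for every open set $U \subset X$ --- by the same duality mechanism you use, testing against $f \in C_0(U)$ with $\|f\|_{\infty} \le 1$ extended by zero --- and concludes via the Portmanteau theorem, quoting (rather than proving) the standard fact that a sequence of probability measures converging weakly to a probability measure is tight. You instead extract weak-$*$ cluster points $\lambda$ of $(|\mu_n|)$ by Banach--Alaoglu, prove $|\mu| \le \lambda$ from the dual formula for $\int_X g\dd|\mu|$ --- which you establish from scratch via the polar decomposition $\dd\mu = h\dd|\mu|$ and Lusin's theorem, a step the paper sidesteps because the supremum over $f \in C_0(U)$, $\|f\|_\infty \le 1$, \emph{is} the variation of the open set $U$ --- and you upgrade to $\lambda = |\mu|$ by comparing total masses, which is exactly where the hypothesis $\|\mu_n\| \to \|\mu\|$ enters in both arguments. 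What your route buys is self-containedness: no Portmanteau, and your closing Urysohn-function argument is in effect a direct proof of the tightness lemma that the paper only cites. What it costs is the sequential Banach--Alaoglu step, which requires $C_0(X)$ to be separable; you rightly flag that this holds for $X = \Omega \subset \R^2$, which is all the paper needs, but the proposition is stated for a general locally compact metric space and the paper's argument involves no compactness extraction. The restriction is in fact easily removable from your own scheme: since your Lusin construction produces $f = g\psi$ with $\psi \in C_c(X)$, the inequality $\liminf_n \int_X g\dd|\mu_n| \ge \int_X g\dd|\mu|$ holds for the full sequence and for every nonnegative bounded continuous $g$, and applying it to both $g$ and $\|g\|_\infty - g$ together with $\|\mu_n\| \to \|\mu\|$ yields $\int_X g\dd|\mu_n| \to \int_X g\dd|\mu|$ for all $g \in C_0(X)$ with no subsequences at all.
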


\begin{proof}
The result is obvious if $\mu = 0$, so we assume henceforth that
$\mu \neq 0$. Then $\mu_n \neq 0$ for all sufficiently large $n \in
\N$, and we can thus define the normalized measures
$$
  \tilde \mu_n \,=\, \frac{\mu_n}{\|\mu_n\|}~, \qquad
  \hbox{and}\quad \tilde \mu \,=\, \frac{\mu}{\|\mu\|}~.
$$
By construction $|\tilde \mu_n|$ and $|\tilde \mu|$ are now
probability measures on $X$, and $\tilde \mu_n \weakto \tilde\mu$
as $n \to \infty$.

Let $U$ be an open subset of $X$, and take $f \in C_0(U)$ such that
$|f(x)| \le 1$ for all $x \in U$. We denote by $\bar f : X \to \R$
the extension of $f$ by zero outside $U$. One verifies that
$\bar f \in C_0(X)$, so that
$$
  \Bigl|\int_U f \dd\tilde\mu\Bigr| \,=\, \Bigl|\int_X \bar f
  \dd\tilde\mu\Bigr| \,=\,  \lim_{n\to\infty}\Bigl|\int_X \bar f
  \dd\tilde\mu_n\Bigr| \,\le\, \liminf_{n \to \infty}|\tilde\mu_n|(U)~,
$$
because $|\bar f| \le {\rm 1}_U$ (the indicator function of $U$).
It follows that
$$
  |\tilde \mu|(U) \,=\, \sup\left\{ \Bigl|\int_U f \dd\tilde\mu\Bigr| ~;\,
  f \in C_0(U)\,,~\|f\|_{\infty} \le 1\right\} \,\le\, \liminf_{n \to \infty}
  |\tilde\mu_n|(U)~. \eqno(3)
$$
Since (3) holds for any open set $U \subset X$, the celebrated
Portmanteau theorem \cite{Bi} implies that $|\tilde \mu_n| \weakto
|\tilde \mu|$ as $n \to \infty$, hence also $|\mu_n| \weakto |\mu|$
as $n \to \infty$.

As $(|\tilde \mu_n|)$ is a sequence of probability measures that
converges weakly to the probability measure $|\tilde \mu| \in \cM(X)$,
the sequence $(|\tilde \mu_n|)$ is tight (see the discussion above),
and so is the sequence $(|\mu_n|)$.
\end{proof}

\subsection{Velocity bounds in $L^\infty(\R^3)^{-1}$}\label{sec52}

This section is devoted to the proof of Lemma~\ref{lemma-bmo-bound}.
We first note that it is enough to show that $\|u\|_{(L^\infty)^{-1}}
\le c$ when $\omega_\theta = \delta_{(\bar r,\bar z)}$ for some $(\bar r, 
\bar z) \in \Omega$, as the general situation can be thought of as
a continuous superposition of these special cases. Moreover, due to
the scaling invariance and the translational symmetry along the
$z$--axis, it is enough to consider the particular case where $\bar
r=1$, $\bar z=0$.

The proof can be motivated by the following observation, which is
as a variant of formula (1.11) in~\cite{Os}:
\begin{equation}\label{z1}
  \partial_i \log |x| \,=\, \div \,(x_i \nabla \log |x|)\,\hbox{ in } \R^2
  \quad (i = 1,2)\,.
\end{equation}
This shows that, in dimension two, the vector field $\nabla\log |x|$ belongs
to $(L^\infty)^{-1}$, and not only to $\BMO^{-1}$. We now consider a
three-dimensional analogue of~\eqref{z1}, which is adapted to our purposes.
Let
\[
 \cG(x) \,=\, \frac1{4\pi |x|}\,,\qquad x\in\R^3\setminus\{0\}\,,
\]
be the fundamental solution of the Laplacian in $\R^3$, and consider the
matrix-valued function
\[
  P \,=\, \begin{pmatrix}
  -\partial_3\cG & 0 & \partial_1 \cG\\
  0& -\partial_3 \cG & \partial_2 \cG\\
  \partial_1 \cG &\partial_2 \cG &\partial_3 \cG
  \end{pmatrix}\,.
\]
Note that $\div \,P = 0$ in $\R^3\setminus\{0\}$, where (as usual) $\div \,P$ is
the vector given in coordinates by $(\div\, P)_i = \partial_j P_{ij}$.
In the sense of distributions, we have
\begin{equation}\label{z6}
  \nabla \cG \,=\, \div\,(x_3 P)\,, \quad \hbox{in }\cD'(\R^3)\,.
\end{equation}

Let us parametrize the vortex filament supported by the circle $\cC =
\{(x_1,x_2,0)\,|\, x_1^2 + x_2^2 = 1\}$ using $\gamma(s) = (\cos s,
\sin s, 0)$ for $s\in(-\pi,\pi]$. The associated velocity field $U$ is
\begin{equation}\label{z8}
  U(x) \,=\, \curl \int_{-\pi}^{\pi} \cG(x-\gamma(s))\gamma'(s)\dd s
  \,=\, \int_{-\pi}^{\pi} \nabla \cG(x-\gamma(s))\wedge \gamma'(s)\dd s\,.
\end{equation}
Using~\eqref{z6} together with the fact that $|\gamma'(s)| = 1$, we see that
to prove our claim, it is enough to establish a uniform bound for the
quantity
\[
  \int_{-\pi}^{\pi}| (x-\gamma(s))_3P(x-\gamma(s))|\dd s \,=\,
  |x_3| \int_{-\pi}^{\pi} |P(x-\gamma(s))|\dd s\,.
\]
As $|P(x)|\le c |x|^{-2}$, we only need to bound the expression
\[
  I(x) \,=\, \int_{-\pi}^{\pi}\frac{|x_3|}{|x-\gamma(s)|^2}\dd s
  \,=\, \frac{2\pi|x_3|}{\sqrt{(1+|x|^2)^2 - 4(x_1^2 + x_2^2)}}\,.
\]
But $(1+|x|^2)^2 - 4(x_1^2 + x_2^2) = (1-x_1^2-x_2^2)^2 + 2(1+x_1^2+x_2^2)x_3^2 +
x_3^4 \ge 2x_3^2$, hence $I(x) \le \sqrt{2}\pi$. The proof is thus
complete. \QED

\begin{rem}\label{BB_rem}
The above argument seems to be related to results of Bourgain-Brezis
in~\cite{BB} on the solvability of the equation $\div Y=f$ in spaces
for which standard elliptic theory fails. But in the more general situation
considered in that paper we could not find exactly the estimates that
we need in the special case considered here.
\end{rem}

\begin{rem}\label{BMO_rem}
If we only wish to prove a $\BMO^{-1}$ bound for $u$, which is sufficient
to apply the results of~\cite{SSSZ}, we see from \eqref{z8} that it is
enough to estimate the vector field
\[
  A(x) \,=\, \int_{-\pi}^{\pi} \cG(x-\gamma(s))\gamma'(s)\dd s
\]
in the space $\BMO$. This can be done in a number of ways. For
example, we note that $\nabla A\in L^p(\R^3)$ for any $p\in(1,2)$, and
near the circle $\cC$ we have $|\nabla A(x)| \lesssim {\rm dist}
(x,\cC)^{-1}$.  This easily gives a uniform bound on $R^{-3+p}\int_{B_{x,R}}
|\nabla A(y)|^p\dd y$, which implies that $A\in \BMO$.

If one is willing to use deeper results in harmonic analysis, one can
apply for example Theorem 3 on page 159 of Stein's book~\cite{Stein}
and some elementary estimates to see that, for the $\BMO$ bound of
$A=\Delta^{-1}\omega$, it is enough to control
\[
  \sup_{x \in \R^3}\,\sup_{R>0} \,\,\frac1R \,\int_{B_{x,R}}|\omega(y)|\dd y\,.
\]
That quantity is in turn bounded by $c\|\omega_\theta\|_{L^1(\Omega)}$,
as is easily verified.
\end{rem}

\subsection{Bounds on the fundamental solution}
\label{sec53}

This section is devoted to the proof of Proposition~\ref{fundsol-prop}.
Since the existence of a (unique) fundamental solution $\Phi$ is known
from the work of Aronson, we concentrate on the derivation of the
upper bound~\eqref{fund-bound}, and for that purpose we adapt to
our particular situation the efficient approach of Fabes and Stroock
\cite{FaSt}. Without loss of generality, we take $\nu = 1$, we assume
that the functions $U,V$ are smooth and bounded on $\R^n \times [0,T]$,
and we prove estimate~\eqref{fund-bound} for $s = 0$.

Let $f$ be a smooth solution to~\eqref{f-evol} on $\R^n \times [0,T]$,
with (for instance) compactly supported initial data. Given any
fixed vector $\alpha \in \R^n$, we define $g(x,t) = e^{-\alpha\cdot x}
f(x,t)$ for $x \in \R^n$ and $t \in [0,T]$. The evolution
equation satisfied by $g$ is
\begin{equation}\label{g-evol}
  \partial_t g + U\cdot\nabla g + (U\cdot\alpha + V)g \,=\,
  \Delta g + 2\alpha\cdot \nabla g + \alpha^2 g\,.
\end{equation}
The proof of the upper bound on the fundamental solution
of~\eqref{f-evol} involves four steps:

\medskip\noindent{\bf Step 1:} $L^1$ estimate. Assuming first
that $g$ is a nonnegative solution of~\eqref{g-evol}, and
using the assumption that $\div U = 0$, we compute
\begin{align*}
  \frac{\D}{\D t}\int g\dd x \,&=\, \alpha^2 \int g\dd x -
  \int (U\cdot\alpha + V)g \dd x \\
  \,&\le\, \Bigl(\alpha^2 + |\alpha| \|U(t)\|_{L^\infty}
  + \|V(t)\|_{L^\infty}\Bigr)\int g\dd x\,.
\end{align*}
Here and in what follows all integrals are taken over the whole
Euclidean space $\R^n$, and for simplicity we write
$\|U(t)\|_{L^\infty}$ instead of $\|U(\cdot,t)\|_{L^\infty(\R^n)}$.
Applying Gronwall's lemma, we obtain the estimate
\begin{equation}\label{g-L1}
  \int |g(x,t)|\dd x \,\le\, \biggl(\int |g(x,0)|\dd x\biggr)
  \exp\Bigl(\alpha^2 t + \int_0^t \bigl(|\alpha| \|U(s)\|_{L^\infty}
  + \|V(s)\|_{L^\infty}\bigr)\dd s\Bigr)\,,
\end{equation}
for $t \in [0,T]$. Note that~\eqref{g-L1} remains valid in
the general case where $g$ changes its sign.

\medskip\noindent{\bf Step 2:} $L^1$--$L^2$ estimate. By a similar
calculation, we find
\begin{align*}
  \frac12 \frac{\D}{\D t}\int g^2\dd x \,&=\,
  \int g\Bigl(\Delta g + 2\alpha\cdot\nabla g + \alpha^2 g -
  U\cdot\nabla g - (U\cdot\alpha + V)g\Bigr)\dd x \\
  \,&=\, -\int |\nabla g|^2\dd x + \Bigl(\alpha^2 + |\alpha|
  \|U(t)\|_{L^\infty} + \|V(t)\|_{L^\infty}\Bigr)\int g^2\dd x\,.
\end{align*}
To estimate the right-hand side we apply Nash's inequality
\[
  \biggl(\int g^2 \dd x\biggr)^{1+2/n} \,\le\, C_n
  \biggl(\int |g| \dd x\biggr)^{4/n} \int |\nabla g|^2\dd x\,,
\]
which holds for any $g \in L^1(\R^n) \cap H^1(\R^n)$ with
a constant $C_n > 0$ depending only on the space dimension $n$.
We thus obtain the estimate
\begin{equation}\label{g-L2prelim}
  \frac12 \frac{\D}{\D t}\int g^2\dd x \,\le\, - \frac{\Bigl(
  \int g^2 \dd x\Bigr)^{1+2/n}}{C_n \Bigl(\int |g| \dd x\Bigr)^{4/n}}
  + \Bigl(\alpha^2 + |\alpha| \|U(t)\|_{L^\infty} + \|V(t)\|_{L^\infty}
  \Bigr)\int g^2\dd x\,,
\end{equation}
which is a differential inequality for the $L^2$ norm of
the solutions of~\eqref{g-evol}. To solve~\eqref{g-L2prelim},
we temporarily denote
\begin{align*}
  \Lambda(t) \,&=\, \alpha^2 t + \int_0^t \bigl(|\alpha|
  \|U(s)\|_{L^\infty} + \|V(s)\|_{L^\infty}\bigr)\dd s \,, \\
  A(t) \,&=\, \exp(-\Lambda(t)) \int |g(x,t)|\dd x \,\le\, A(0)\,, \\
  B(t) \,&=\, \exp(-2\Lambda(t)) \int g(x,t)^2 \dd x\,,
  \qquad t \in [0,T]\,.
\end{align*}
Here the bound $A(t) \le A(0)$ is a reformulation of~\eqref{g-L1}.
Using~\eqref{g-L2prelim}, we find
\begin{align*}
  B'(t) \,&\le\, -\frac{2}{C_n}\,\frac{\Bigl(\int g^2 \dd x
  \Bigr)^{1+2/n}}{\Bigl(\int |g| \dd x\Bigr)^{4/n}}\,e^{-2\Lambda(t)}
  \,\le\, -\frac{2}{C_n}\,\frac{\Bigl(B(t)\,e^{2\Lambda(t)}
  \Bigr)^{1+2/n}}{\Bigl(A(0)\,e^{\Lambda(t)}\Bigr)^{4/n}}\,e^{-2\Lambda(t)} \\
  \,&=\, -\frac{2}{C_n}\,\frac{B(t)^{1+2/n}}{A(0)^{4/n}}\,,
  \qquad 0 < t \le T\,.
\end{align*}
Integrating this simple differential inequality we obtain $B(t) \le
(C_n' A(0))^2 t^{-n/2}$ for $t \in (0,T]$, where $C_n' = (nC_n/4)^{1/4}$.
In other words, we have proved the $L^1$--$L^2$ estimate
\begin{equation}\label{g-L2}
  \|g(t)\|_{L^2} \,\le\, \frac{C_n'}{t^{n/4}}\,\|g(0)\|_{L^1}
  \,\exp\Bigl(\alpha^2 t + \int_0^t \bigl(|\alpha|
  \|U(s)\|_{L^\infty} + \|V(s)\|_{L^\infty}\bigr)\dd s\Bigr)\,,
\end{equation}
for all $t \in (0,T]$.

\medskip\noindent{\bf Step 3:} $L^1$--$L^\infty$ estimate. We
consider the adjoint equation
\begin{equation}\label{g-adj}
  \partial_t \tilde g - U\cdot\nabla \tilde g + (U\cdot\alpha + V)
  \tilde g \,=\, \Delta \tilde g - 2\alpha\cdot \nabla \tilde g
  + \alpha^2 \tilde g\,,
\end{equation}
which has exactly the same structure as~\eqref{g-evol}. In particular,
the $L^1$--$L^2$ bound~\eqref{g-L2} holds for the solutions of
\eqref{g-adj}, and using a standard duality argument this
implies the following $L^2$--$L^\infty$ estimate for the
solutions of~\eqref{g-evol}:
\begin{equation}\label{g-Linfty}
  \|g(t)\|_{L^\infty} \,\le\, \frac{C_n'}{t^{n/4}}\,\|g(0)\|_{L^2}
  \,\exp\Bigl(\alpha^2 t + \int_0^t \bigl(|\alpha|
  \|U(s)\|_{L^\infty} + \|V(s)\|_{L^\infty}\bigr)\dd s\Bigr)\,,
\end{equation}
for all $t \in (0,T]$. To obtain the $L^1$--$L^\infty$
bound we estimate $\|g(t/2)\|_{L^2}$ in terms of $\|g(0)\|_{L^1}$
using~\eqref{g-L2}, and then $\|g(t)\|_{L^\infty}$ in terms of
$\|g(t/2)\|_{L^2}$ using the analogue of~\eqref{g-Linfty}.
Denoting $C_n'' = 2^{n/2}C_n'^2$, this gives
\begin{align}\nonumber
  \|g(t)\|_{L^\infty} \,&\le\, \frac{C_n''}{t^{n/2}}\,\|g(0)\|_{L^1}
  \,\exp\Bigl(\alpha^2 t + \int_0^t \bigl(|\alpha|
  \|U(s)\|_{L^\infty} + \|V(s)\|_{L^\infty}\bigr)\dd s\Bigr)
    \\ \label{g-L1Linfty}
  \,&\le\, \frac{C_n''}{t^{n/2}}\,\|g(0)\|_{L^1}
  \,\exp\Bigl(\alpha^2 t + 2 K_1 |\alpha| \sqrt{t} + K_2\Bigr)\,,
\end{align}
for all $t \in (0,T]$, where in the second inequality we
used definitions~\eqref{UVbounds}.

\medskip\noindent{\bf Step 4:} conclusion. By construction
the solutions of~\eqref{g-evol} can be represented as
\[
  g(x,t) \,=\, \int e^{\alpha\cdot(y-x)} \Phi(x,t;y)
  g(y,0)\dd y\,, \qquad x \in \R^n\,, \qquad 0 < t \le T\,,
\]
where $\Phi(x,t;y) = \Phi_{U,V,1}(x,t;y,0)$ is the fundamental
solution of equation~\eqref{f-evol} with $\nu = 1$.  Estimate
\eqref{g-L1Linfty}, which holds for all smooth and compactly supported
initial data $g(x,0)$, is thus equivalent to the pointwise upper bound
\begin{equation}\label{Phiupper}
  \Phi(x,t;y) \,\le\, \frac{C_n''}{t^{n/2}}\, e^{\alpha\cdot(x-y)}
  \,\exp\Bigl(\alpha^2 t + 2 K_1 |\alpha| \sqrt{t} + K_2\Bigr)\,,
  \qquad x,y \in \R^n\,, \quad 0 < t \le T\,.
\end{equation}
The vector $\alpha \in \R^n$ was arbitrary, and the dependence
upon $\alpha$ is fully explicit in~\eqref{Phiupper}. Given
$x,y \in \R^n$ and $t > 0$, we can thus choose $\alpha =
-(x-y)/(2t)$, in which case~\eqref{Phiupper} becomes
\begin{equation}\label{Phiupper2}
  \Phi(x,t;y) \,\le\, \frac{C_n''}{t^{n/2}} \,\exp\Bigl(
  -\frac{|x-y|^2}{4t} + K_1 \frac{|x-y|}{\sqrt{t}} + K_2\Bigr)\,.
\end{equation}
This proves~\eqref{fund-bound} for $\nu = 1$ and $s = 0$,
and the general case easily follows. \QED
 
%%%%%%%%%%%%%%%%%%%%%%%%%%%%%%%%%%%%%%%%%%%%%

\end{document}